  \newcounter{kconstant}
  \newcommand{\nck}[1]{\refstepcounter{kconstant}\label{#1}}
  \newcommand{\uck}[1]{k_{\ref{#1}}}
\begin{document}

\title{Random walk on the simple symmetric exclusion process}
\date{\today}
\author{ Marcelo R. Hil\'ario \thanks{Universidade Federal de Minas Gerais, Dep. de Matem\'atica, 31270-901 Belo Horizonte}
  \and
 Daniel Kious\thanks{University of Bath, Department of Mathematical Sciences, Claverton Down, BA2 7AY Bath, UK}
\and
  Augusto Teixeira \thanks{IMPA, Estrada Dona Castorina 110, 22460-320 Rio de Janeiro, RJ - Brazil}}

\maketitle

\begin{abstract}
  We investigate the long-term behavior of a random walker evolving on top of the simple symmetric exclusion process (SSEP) at equilibrium, in dimension one.
  At each jump, the random walker is subject to a drift that depends on whether it is sitting on top of a particle or a hole, so that its asymptotic behavior is expected to depend on the density $\rho \in [0, 1]$ of the underlying SSEP.
  Our first result is a law of large numbers (LLN) for the random walker for all densities $\rho$ except for at most two values $\rho_-, \rho_+ \in [0, 1]$.
  The asymptotic speed we obtain in our LLN is a monotone function of $\rho$.
  Also, $\rho_-$ and $\rho_+$ are characterized as the two points at which the speed may jump to (or from) zero.
  Furthermore, for all the values of densities where the random walk experiences a non-zero speed, we can prove that it satisfies a functional central limit theorem (CLT).
  For the special case in which the density is $1/2$ and the jump distribution on an empty site and on an occupied site are symmetric to each other, we prove a LLN with zero limiting speed.
  Finally, we prove similar LLN and CLT results for a different environment, given by a family of independent simple symmetric random walks in equilibrium.
\end{abstract}

\bigskip

Mathematics Subject Classification (2010): 60K35, 82B43, 60G55

\renewcommand\footnotemark{}
\renewcommand\footnoterule{}
\let\thefootnote\relax\footnotetext{{\bf Keywords:} Random walks, dynamic random environments, renormalization}

\section{Introduction}
\label{s:intro}

Over the last decades the study of the long-term behavior of the position of a particle subject to the influence of a random environment has received great attention from the physics and mathematics community.
In this context, one is usually interested in proving the existence of a well-defined limiting speed for the particle and, once the existence of such a speed is known, to characterize its fluctuations around the average position.

The random environments can be either static (RWRE) or dynamic (RWDRE) depending on whether it is kept fixed or evolves stochastically after the initial configuration is sampled from a given distribution.

For one-dimensional static random environments, since the pioneering work of Solomon \cite{Solomon75}, criteria for recurrence or transience, law of large numbers, central limit theorems, anomalous fluctuation regimes and large deviations have been obtained, see for instance \cite{Solomon75, Kesten75, Sinai82}.
For higher dimensional static environments, important progress has also been achieved, however the knowledge is still modest when compared to the one-dimensional setting (see for instance \cite{Sznitman04,BDR} among many others).
A number of important questions remain open and  there is still much to be understood.
We refer the reader to \cite{MR3059554, Sznitman04} and, more recently \cite{Drewitz14}, for very good reviews on the topic.

The research on random walks on dynamic random environments (RWDRE) was initiated more recently and a number of works have been dedicated to these models, proving laws of large numbers (LLN), central limit theorems (CLT) and deviation bounds.
We provide a short historical background of these works in Section~\ref{ss:related}.

Several of the techniques developed for RWDRE focus on environments with either fast or uniform mixing conditions, see for example \cite{comets2004, RV13, BHT18}.
From a physical perspective, whenever the environment mixes fast one expects the random walk to present diffusive behavior.

Another important class of RWDRE that has received much attention are those that evolve on top of conservative particle systems such as the simple symmetric exclusion process, see \cite{francoiss, BR12, RWonRW} and Section~\ref{ss:related} for a discussion.
To the best of our knowledge, all these works have focused on ballistic and pertubative regimes, as we explain in Section \ref{ss:related}.
In this context, the random walker overtakes the particles of the environment allowing for a renewal structure to be established.
As a consequence the behavior of the random walker in these regimes is also characterized by Gaussian fluctuations and CLT.

However, it is not clear if this diffusive behavior is present for the whole range of parameters of the model.
In fact, in \cite{Avena2012}, simulations indicate that, when the random walk has zero speed, it can display non-diffusive fluctuations, due to the environment's long term memory along the time direction.
In \cite{PhysRevE.97.042116}, diffusivity and trapping effects are predicted both from theoretical physics arguments, as well as simulations.
However, giving a rigorous answer to the asymptotic behavior of this model remains a fascinating open problem in mathematics.

The existence of long-range dependencies, not only brings a set of interesting challenges from the mathematical perspective but, more importantly, also raises the possibility to find non-diffusive behavior and other physically relevant phenomena at the critical zero-speed regime.
This is a major motivation for further investigations, both at the critical value and around its neighborhood.
Let us now describe the setting we consider and present the advances we obtain in this problem.

In this work we consider one-dimensional random walks on top of conservative particle systems starting at equilibrium with density $\rho > 0$.
Although some of the techniques we develop may, in principle, be adapted to other models, we will focus on the case where the environment is either the simple symmetric exclusion process (SSEP) whose law will be denoted $\mathbf{P}^\rho_{EP}$ or a system of independent random walks (PCRW) starting from a Poisson product measure, whose law will be denoted $\mathbf{P}^\rho_{RW}$.
We postpone the mathematical construction of the environments to Section \ref{ss:envi} where we will also precise the meaning of the density $\rho$ in each of the cases.

Once one of these environments is fixed, we define the evolution of the random walk as follows.
Fix two parameters $p_\circ$ and $p_\bullet$ in $[0,1]$ with $p_\circ \leq p_\bullet$.
The random walk starts at the origin and jumps in discrete time.
At the moment of a jump, it inspects the environment exactly in the site where it lies on.
If the site is vacant, the random walk decides to jump to the right with probability $p_{\circ}$ and to the left with probability $1-p_{\circ}$.
If the site is occupied, the random walk decides to jump to the right with probability $p_{\bullet}$ and to the left with probability $1-p_{\bullet}$.
Let us denote  $(X_n)_{n\in \mathbb{N}}$ the trajectory of the random walk.
We are going to revisit the definition of the random walk on Section \ref{ss:randomwalker} where a useful graphical construction is provided.

We will write $\mathbb{P}^{\rho}$ for the joint law of the environment and the random walker when the density of the environment is $\rho$, often called the ``annealed law''.
The details on the construction of this measure are given in Section \ref{s:notation}.

The main contribution of this paper is to develop a technique that allows one to prove LLN and CLT for random walks on a class of dynamical random environments that includes the simple symmetric Exclusion process (SSEP) and the Poisson cloud of independent simple symmetric random walkers (PCRW).

For these two specific models, we use lateral space-time mixing bounds together with a decoupling inequality involving small changes in the density (sprinkling) in order to prove a LLN, i.e.~the existence of an asymptotic speed, for all densities $\rho \in (0,1)$ except at most two values denoted $\rho_-$ and $\rho_+$.
We will provide a characterization of these two possible exceptional densities.
Moreover, we are able to prove a CLT for all densities for which the speed exists and is non-zero.
This is the content of our main result, Theorem \ref{t:speed}.
A simplified version of it is stated below.

\begin{theorem}
  \label{t:intro_speed}
  There exists a deterministic non-decreasing function $v\colon [0,1] \to \mathbb{R}$ and two points $0\leq \rho_- \leq \rho_+ \leq 1$ such that, for every $\rho \in (0,1) \setminus\{\rho_-,\rho_+\}$,
  \begin{equation}
    \frac{X_n}{n} \to v(\rho), \qquad \mathbb{P}^{\rho}-\text{almost surely.}
  \end{equation}
%   where
%  \begin{equation}\label{e:defrho+-}
%    \begin{split}
%      \rho_- & := \sup\{ \rho \in (0, 1): v(\rho) < 0 \}, \\
%      \rho_+ & := \inf\{ \rho \in (0, 1): v(\rho) > 0 \}.
%    \end{split}
%  \end{equation}
  Moreover, for every $\rho \not \in [\rho_-, \rho_+]$, we have that $v(\rho) \neq 0$ and the process
  \begin{equation}
    \left(\frac{X_{\lfloor nt\rfloor}-ntv(\rho)}{\sqrt{n}}\right)_{t\ge0}
  \end{equation}
 converges in law to a non-degenerate Brownian motion.
\end{theorem}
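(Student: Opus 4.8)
The plan is to build a multi-scale renormalization scheme resting on two ingredients that hold for both environments $\mathbf{P}^{\rho}_{EP}$ and $\mathbf{P}^{\rho}_{RW}$: first, the graphical construction of Section~\ref{ss:randomwalker}, which couples the trajectories $(X_n)_n$ monotonically in the environment — using crucially that $p_\circ \le p_\bullet$, so more particles only push the walk to the right — and simultaneously couples environments of different densities so that $X_n$ is stochastically non-decreasing in $\rho$; and second, a sprinkled lateral decoupling inequality, asserting that two increasing events measurable in space-time regions that are far apart in space become approximately independent once one is allowed to perturb the density by an arbitrarily small amount. The first ingredient is what will eventually make the limiting velocity a monotone function of $\rho$, since ``the walk is slow over a given space-time window'' is a decreasing event in the environment and hence has probability monotone in $\rho$.

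\textbf{Renormalization for the speed.} For a velocity $w$ and a scale $L$, declare a space-time box of spatial width comparable to $L$ and time length $L$ to be \emph{$w$-good} if every trajectory that enters its spatial base displaces to the right by at least $wL$ before the time window closes. A geometric and combinatorial argument then shows that if a box at scale $L_{k+1}=\ell_k L_k$ fails to be $w$-good, then a cascade of boxes at scale $L_k$ must fail to be $w'$-good for a slightly reduced velocity $w' = w-\delta_k$, and — this is the point that must be set up with care — these failures can be chosen to sit in laterally separated regions. Feeding this cascade into the sprinkled decoupling inequality yields a recursive estimate of the form
\[
  p_{k+1}(\rho;w) \;\le\; C_k\, p_k\bigl(\rho-\varepsilon_k;\,w-\delta_k\bigr)^{c\,\ell_k} \;+\; \exp\bigl(-L_k^{\beta}\bigr),
\]
where $\sum_k \varepsilon_k$ and $\sum_k \delta_k$ are finite and can be made arbitrarily small by choosing the base scale $L_0$ large. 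Hence, as soon as for some $\rho' < \rho$ and some finite scale the associated bad probability lies below an explicit trigger threshold, the recursion forces $p_k(\rho;w)\to 0$ faster than any polynomial; this defines $v_{\mathrm{low}}(\rho)$ as the supremum of the velocities $w$ for which such a trigger exists using densities below $\rho$, and an entirely symmetric analysis defines $v_{\mathrm{high}}(\rho)$ from above.

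\textbf{From decay to the LLN.} Summing $p_k(\rho;w)\to 0$ along the trajectory of the walk and applying Borel--Cantelli gives $\liminf_n X_n/n \ge v_{\mathrm{low}}(\rho)$ almost surely, and dually $\limsup_n X_n/n \le v_{\mathrm{high}}(\rho)$. Both functions are non-decreasing in $\rho$ by the monotone coupling; moreover, using that the finite-scale good-box probabilities depend continuously on $\rho$, one checks that on the open set where the common value is nonzero one has $v_{\mathrm{low}} = v_{\mathrm{high}}$ and that this value varies continuously. Therefore there is a closed interval $[\rho_-,\rho_+]$ on which the common value vanishes, the function $v := v_{\mathrm{low}}=v_{\mathrm{high}}$ is continuous — whence the LLN $X_n/n \to v(\rho)$ holds — on all of $[0,1]$ except possibly at the two endpoints $\rho_\pm$, which by construction are exactly the densities at which the speed may jump to or away from $0$.

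\textbf{The functional CLT, and the hard points.} Fix $\rho\notin[\rho_-,\rho_+]$, so that $v(\rho)\neq 0$; by symmetry assume $v(\rho)>0$. The same renormalization additionally yields stretched-exponential bounds on the probability that the walk ever backtracks by a macroscopic distance below a level it has already reached, which lets one define regeneration times $\tau_1<\tau_2<\cdots$ at which the walk sits at a fresh all-time maximum that it never revisits. Because the environments carry genuine long-range correlations in time, the blocks $(X_{\tau_{i+1}}-X_{\tau_i},\,\tau_{i+1}-\tau_i)$ are not exactly independent; one uses the lateral sprinkled decoupling to couple them, with summable error, to an honest i.i.d.\ sequence, inheriting from the backtracking estimate a stretched-exponential tail for $\tau_2-\tau_1$, after which the finiteness of its second moment makes the standard renewal argument upgrade the LLN to the invariance principle for $\bigl(X_{\lfloor nt\rfloor}-ntv(\rho)\bigr)/\sqrt n$, with non-degenerate Gaussian limit since the walk is genuinely random and the environment is not frozen. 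I expect the two main obstacles to be, first, the geometry of the renormalization — guaranteeing that a slow large box is forced by many \emph{laterally separated} slow small boxes while keeping the velocity loss $\sum_k\delta_k$, the density sprinkling $\sum_k\varepsilon_k$ and the prefactor $C_k$ all under control — and, second, replacing the non-Markovian dependence carried by the dynamic environment by a bona fide i.i.d.\ renewal structure in the CLT using only the lateral decoupling rather than a true renewal property of the environment.
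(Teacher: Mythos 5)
Your plan captures the broad multi-scale flavour of the paper but collapses two genuinely distinct decoupling mechanisms into one, and this creates a real gap at the heart of the renormalization step. In the paper, the \emph{lateral} decoupling (Proposition~\ref{p:decouple}, no sprinkling, space separation $\gtrsim H^{3/4}$) is used to control \emph{fast} deviations: if the walker moves at speed $>v_+$ over a large box, the bad sub-boxes it visits are necessarily spread out horizontally, so the hypothesis of lateral decoupling is met for free. For \emph{slow} deviations — precisely the events needed to prove ballisticity for $\rho>\rho_+$ — the bad sub-boxes can be stacked vertically (a slow trajectory keeps revisiting the same column), so there is no lateral separation to exploit. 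Your claim that ``these failures can be chosen to sit in laterally separated regions'' is therefore false in the case you actually need it. The paper handles this by a second, different inequality (Proposition~\ref{p:sprinkling}), which decouples boxes separated in \emph{time} and pays for it with a small density increment; the corresponding combinatorial step (Lemma~\ref{l:determlemmasprink}) finds two sub-boxes that are far apart \emph{vertically}, not laterally. Without making this distinction your recursion does not close in the slow-deviation case.

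Two further points you gloss over are in fact the two hardest parts of the paper. First, the identity $v_{\mathrm{low}}=v_{\mathrm{high}}$ (in the paper, $v_-=v_+$, Theorem~\ref{l:v_+=v_-}) does \emph{not} follow from the renormalization recursion together with continuity of finite-scale probabilities; the paper devotes a separate section to it, introducing trapped and threatened points (Definitions around \eqref{e:trapped_point} and \ref{d:threatened}, Lemmas~\ref{l:delays2}--\ref{l:threatened}) and bootstrapping through Proposition~\ref{prop:final_delay}. Your proposal contains no argument for this, and merely asserting it by continuity is circular: you would already need to know the walker cannot persistently sustain a speed strictly between $v_{\mathrm{low}}$ and $v_{\mathrm{high}}$. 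Relatedly, the paper explicitly states that continuity of $v(\cdot)$ on $(0,\rho_-)\cup(\rho_+,1)$ is a conjecture they do \emph{not} prove, so your ``one checks that ... this value varies continuously'' is not available and is not needed for the statement. Second, the CLT is not obtained by coupling regeneration blocks to an i.i.d.\ sequence via decoupling with summable error. The paper invokes, as a genuine black box, the regeneration structures of \cite{francoiss} (SSEP) and \cite{RWonRW} (PCRW), which are built from the Markov property of the environment and produce bona fide independent increments; what the renormalization supplies to those arguments is exactly the stretched-exponential ballisticity estimate of Proposition~\ref{p:ballisticity}. Replacing this by a ``coupling with summable error'' would require a substantially new argument that the proposal does not sketch.

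In short: the proposal correctly identifies monotonicity in $\rho$, the two-decoupling philosophy, and Borel--Cantelli as the way to pass from decay estimates to the LLN, but it misstates the geometry of slow boxes (needing a time-separated, sprinkled decoupling rather than a lateral one), omits the trapped/threatened-points argument that makes $v_+=v_-$, leans on an unproved continuity claim, and substitutes a heuristic coupling for the paper's actual use of an existing regeneration structure.
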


Theorem \ref{t:intro_speed} gives partial answers to Conjectures~3.1 and 3.2 of \cite{Avena2012}, and to open problems stated in \cite{dHKS14,Renato}.
Implicit formulas for $\rho_-$ and $\rho_+$ can be found in Theorem \ref{t:speed}.
It is natural to expect that $\rho_-$ and $\rho_+$ coincide, but it is actually an interesting open problem, as they could a priori be different, which would indicate a transient regime with zero-speed, reminiscent of random walks in (static) random environment.
Also, we are currently unable to determine whether anomalous fluctuations take place for some values of $\rho$  inside the interval $[\rho_-,\rho_+]$.

For the interesting case where the random walk evolves on the simple symmetric  exclusion process with density $\rho=1/2$ and if $p_\bullet=1-p_\circ$, then the speed exists and is equal to $0$, as stated in Theorem~\ref{l:exclusion_symmetric}.
As far as we are aware, the existence of this zero-speed regime was still an open question.

\subsection{Related work}
\label{ss:related}

The first works dedicated to the study of RWDRE focused on the case were  the underlying medium exhibit fast mixing conditions.
A broad range of such conditions have been considered such as: time independence \cite{MR1477651, MR1766340}, strong mixing conditions \cite{comets2004, MR2786643, MR2191225}, exponential mixing rate \cite{denhollander2014, Bethuelsen2018, mountford2015} \cite{MR2123642}, \cite{Avena2017} and fast decay of covariances \cite{BHT18}.
In all the above circumstances, one expects the random walk to exhibit Gaussian fluctuations and to satisfy a functional central limit theorem.

There have been developments on random walks on top of conservative particle systems, such as SSEP \cite{MR3108811, francoiss} or PCRW \cite{BR12, dHKS14, RWonRW, BHSST1, BHSST2}.
Most of the results therein hold in regimes that are perturbative in some parameter: the density of the environment \cite{RWonRW, BHT18, BHSST1, BHSST2}, the rate of evolution of the environment \cite{francoiss} or the local drift experimented by the random walk \cite{Avena2013} are taken to extreme values.
The main idea is, knowing that the random walk would be ballistic in the limiting case, try to prove that it is still ballistic as the parameters of the model approach the limiting values.
From the ballistic behavior, usually LLN and CLT are obtained with renewal techniques.

In \cite{Avena2012}, a (continuous-time) random walk on the SSEP was studied by means of simulations.
There the authors investigated the limiting behavior as a function of three parameters: the density $\rho$ of the SSEP, the rate $\gamma$ of the SSEP and the local drift $p_\bullet$ of the random walk on occupied sites.
They restrict themselves to the case where the local drift on vacant sites satisfy  $p_\circ = 1-p_\bullet$.

Based on their data, they conjecture that LLN should hold for every possible value of $\rho$.
They also conjecture that it is possible to tune the parameters in order to produce regimes in which the fluctuations of the walker around its limiting speed scale super or sub-diffusively.
This phenomena, should be regarded as a manifestation of the strong space-time correlations of the environment which allow, for instance, the existence of traps that survive enough time for being relevant in the long-term behavior.
They also leave as an interesting open question, to determine wether there are some regimes where the walk can be transient with zero speed, which would be reminiscent of similar phenomena that take place for random walks in static random environments.

To the best of our knowledge, the conjectures and open questions presented in \cite{Avena2012} concerning the behavior of the random walker at or near the zero-speed regime have remained largely open and deserve to be further investigated. We also refer the reader to \cite{PhysRevE.97.042116} for a non-rigorous investigation of the possible trapping mechanisms.

\subsection{Overview of the proof}

In Section \ref{s:strategy}, we provide a sequence of statements that build the main steps towards the proofs of our mains results, Theorem \ref{t:speed} and Theorem \ref{l:exclusion_symmetric}. Here, we  give a rougher description of the overall strategy and of the tools we use.

The first step is to give a graphical construction of the process, in Section \ref{s:coupled_rw},  which will be very useful to emphasize some monotonicity properties. From the graphical construction, we obtain a two dimensional space-time picture on which we define a collection of coupled random walks started from each point of space and time. We observe events in {\it boxes}, which are simply finite regions in this space-time landscape.

Most of our proofs are based on renormalization schemes: the idea is to observe some events on larger and larger boxes and prove that, if some bad event happens at some scale then similar events happen in two different boxes at a smaller scale. If these two boxes were independent, then one could obtain (given an initial estimate) that the probability of the bad event decays exponentially fast in the size of the box. Nevertheless, in our case, the boxes are not independent as the dynamical nature of the environment creates  time and space dependencies. The key observation is the following: as the particles of the environment move diffusively, if the boxes are at a space distance at least $D$ and at a time distance not larger than $D^2$, then we should be able to prove that these boxes are {\it essentially independent}, see Proposition \ref{p:decouple}. This is what we call here the {\it lateral decoupling}, refering to the relative positions of the boxes on the space-time landscape.

There are two quantities that are important in our analysis: $v_+(\rho)$ and $v_-(\rho)$, defined in \eqref{e:v_+} and \eqref{e:v_-}. These {\it upper} and {\it lower speeds} are deterministic and well-defined for every value of $\rho$.
Roughly speaking the probability to move at speeds larger than $v_+(\rho)$ should go to zero along a subsequence (and similarly for the probability to go slower than $v_-(\rho)$).

Then we need to prove a few facts in order to be able to conclude the existence of the speed:
\begin{enumerate}
\item[(1)] the probability to go faster than $v_+(\rho)$ or slower than $v_-(\rho)$ over a time $t$ actually decreases fast enough;
\item[(2)] $v_+(\rho)=v_-(\rho)$, which is then our candidate $v(\rho)$ for the speed.
\end{enumerate}
These two points would indeed imply the existence of the speed. Nevertheless, due to the nature of the lateral decoupling, we are only able to decouple nicely events on space-time boxes that are well separated in space. For instance, two boxes with the same space location but different time locations will not decouple nicely. For this reason,  in Lemma \ref{l:deviation_interp} and Theorem \ref{l:v_+=v_-}, we are only able to prove the following:
\begin{enumerate}
\item[(1')] the probability to go faster than $\max(v_+(\rho),0)$ or slower than $\min(v_-(\rho),0)$ over a time $t$ actually decreases fast enough;
\item[(2)] $v_+(\rho)=v_-(\rho)=v(\rho)$.
\end{enumerate}
This is proved using renormalization and the lateral decoupling, and this is not quite enough to obtain the existence of the speed. 
However, note that this already implies the existence of the speed if $v(\rho)=0$.

We still need to prove that if, for instance, $v(\rho)=v_+(\rho)=v_-(\rho)>0$, then the probability to go slower than $v(\rho)$ decays sufficiently fast.
Note that this is not guaranteed by (1') alone.

For this purpose, we first consider a density $\rho_+$ for which we know that $v(\rho_++\varepsilon)>0$ and we prove that, for any environment with density $\rho>\rho_++\varepsilon$, the probability to go slower than $v(\rho_++\varepsilon)$ decays fast, see Proposition \ref{p:ballisticity}.
Therefore, this provides a linear lower bound for the displacement of the random walker, that is, we conclude that the random walker moves ballistically.

Once we have obtained this ballisticity, we conclude in Section \ref{billburr} the existence of the speed (and the CLT) by using regeneration structures developed in  \cite{francoiss} for the SSEP and in \cite{RWonRW} for the PCRW.\\

We want to emphasize that the technique we develop here is not sensitive to the particularities of the environment. Indeed, the only things we actually require from the environment is some monotonicity (roughly speaking, increasing the environment is equivalent to adding some particles) and to satisfy the lateral decoupling, i.e.~to have correlations that are sub-linear in time. Also, we could choose the random walk to be discrete time or continuous-time without too much troubles. Nevertheless, the last block in our proof is the existence of regeneration times and, there, we can only work with the particular environments used in \cite{francoiss}  and \cite{RWonRW}. We believe that one could develop a renewal structure for any relevant environment but, as we believe that all these processes are physically equivalent, we chose to simply use the results that already exist.

\section{Mathematical setting and main results}
\label{s:notation}

\subsection{Environments}
\label{ss:envi}

In this section, we will give the mathematical construction of two dynamic random environments that we consider: the simple symmetric exclusion process (SSEP) and the Poisson cloud of simple random walks (PCRW).
The starting configuration will be distributed so that the environment is in equilibrium, that is, the environmental process is stationary in time.

Our particle systems will start in equilibrium with a distribution parametrized by a value $\rho\in(0,1)$.
As it will become clear below, if $\lambda$ denotes the expected number of particles per site then we will have $\rho = \lambda$ for the SSEP, while  we will have $\rho = 1 - e^{-\lambda}$ for the PCRW.

For a fixed $\rho$, we will denote $\mathbf{P}^{\rho}_{EP}$ and $\mathbf{P}^{\rho}_{RW}$ the law of a SSEP and a PCRW with density parameter $\rho$, respectively.
We also write  $\mathbf{E}^\rho_{EP}$, $\mathbf{E}^\rho_{RW}$ for the corresponding expectations.

We may drop the superscript $\rho$ and/or the subscripts $EP$ and $RW$ when there is no risk of confusion.

\subsubsection{Definition of the SSEP}\label{const_ssep}
The SSEP with density $\rho\in(0,1)$ is a stochastic process
\begin{equation}
\eta^\rho = (\eta^\rho_t(x); x\in \mathbb{Z})_{t\in \mathbb{R}_+}
\end{equation}
whose graphical construction we outline below.

But let us first give an informal introduction to the process.
At time $t=0$ decide whether a site has a particle independently by tossing a coin with success probability $\rho$.
Then, each particle tries to the jump at a rate $\gamma>0$ with equal probability to the right and to the left.
The jump will only be performed in case the landing position does not contain a particle.

Mathematically, we start by fixing $\left(\eta^\rho_0(x),x\in\mathbb{Z}\right)$, a collection of i.i.d.~Bernoulli random variables with mean $\rho$, that is, $\eta^\rho_0(x)$ is equal to $1$ with probability $\rho$ and to $0$ with probability $1-\rho$, independently over $x \in \mathbb{Z}$.
This represents the initial configuration for the SSEP with density $\rho$.

In order to define the evolution of this process, to each unit edge $\{x,x+1\}$ of $\mathbb{Z}$, we associate a real-valued Poisson point process $(T^x_i)_{i\ge0}$ with rate $\gamma$, independently over $x\in\mathbb{Z}$.
The SSEP is defined as follows.

If, for some $t>0$, $x\in\mathbb{Z}$ and $i\ge0$, we have $T^x_i=t$, then
\begin{equation}
\eta^\rho_t(x+k)=\lim_{\substack{t'\to t,\\ t'<\, t}}\eta^\rho_{t'}(x+1-k)\text{ for }k\in\{0,1\}.
\end{equation}
In words, at each arrival of the Poisson point process $(T^x_i)_{i\geq0}$, the sites $x$ and $x+1$ exchange their occupation.
The construction implies that $\eta^\rho_t(x) \in \{0,1\}$ for every $x \in \mathbb{Z}$ and $t \geq 0$.
When $\eta^\rho_t(x) = 1$ we say that there is a particle on site $x$ at time $t$.
Differently, when $\eta^\rho_t(x)=0$ we say that there is a hole on site $x$ at time $t$.

We will denote $\mathbf{P}^{\rho}_{EP}$ the law of $\eta^{\rho}$ as an element of $\mathcal{D}(\mathbb{R}_+, \{0,1\}^{\mathbb{Z}})$, the standard space of c\`adl\`ag trajectories in $\{0,1\}^{\mathbb{Z}}$.
It is a classical fact that the Bernoulli distribution with parameter $\rho$ is an invariant measure for this process.
Hence, under $\mathbf{P}^{\rho}_{EP}$, and for each fixed time $t\ge0$,  $(\eta^\rho_{t}(x))_{x\in\mathbb{Z}}$ is a collection of Bernoulli random variables with parameter $\rho$.
We also write  $\mathbf{E}^\rho_{EP}$ for the corresponding expectations.

Note that the collections of i.i.d.\ random variables $\left(\eta^\rho_0(x),x\in\mathbb{Z}\right)$ for different parameters $\rho \in (0,1)$ can be coupled in such a way that $\eta^\rho_0(x) \geq \eta_0^{\rho'}(x)$ whenever $\rho' < \rho$.
The graphical construction presented above preserves this property for every $t >0$.\\

Finally, note that $\eta^\rho$ depends also on $\gamma$, but. as we fix this parameter throughout the paper. we can safely make this dependency implicit.

\subsubsection{Definition of PCRW}
The PCRW with density parameter $\rho \in (0,1)$ is a stochastic process
\begin{equation}
\eta^\rho = (\eta^\rho_t(x);\, x \in \mathbb{Z})_{t\in\mathbb{R}_+}
\end{equation}
defined in terms of a collection of independent random walks on $\mathbb{Z}$ as we show below.

Fix $\rho \in (0,1)$ and let  $\lambda=-\ln(1-\rho)\in\mathbb{R}_+$.
Now let $\left(\eta^\rho_0(x),x\in\mathbb{Z}\right)$ be an i.i.d.~collection of Poisson random variables of parameter $\lambda$.
Independently, for every $x\in \mathbb{Z}$, we let $(Z_t^{x,i},t\ge0)_{1\le i\le \eta^\rho_0(x)}$ be a collection of lazy, discrete-time, simple random walks started at $x$ that evolve independently by jumping at each time unit by $-1,0,1$ with probabilities  $(1-q)/2,q,(1-q)/2$, respectively, for some $q\in(0,1)$.
Note that, for a given $x$, the collection is empty on the event that $\eta^\rho_t(x)=0$.
Moreover, note that the random walks are indexed by a continuous time parameter $t \geq 0$, so that their trajectories are in $\mathcal{D}(\mathbb{R}_+, \mathbb{Z})$, despite the fact that the jumps can only occur in integer-valued instants of time.

We define the number of walkers at a given time $t \geq 0$ and position $y \in \mathbb{Z}$ as:
\begin{equation}
\eta^\rho_t(y)=\big|\big\{(x,i) : \ x\in\mathbb{Z}, 1\le i\le \eta^\rho_0(x), Z_t^{x,i}=y\big\}\big|.
\end{equation}
Note that $\eta^\rho_t(x) \in \mathbb{N}$.
When $\eta^\rho_t(x) = j$ for some integer $j \geq 1$ we say that there are $j$ particles on site $x$ at time $t$.
Differently, when $\eta^\rho_t(x) = 0$ we say that there is no particle on site $x$ at time $t$.

We will denote $\mathbf{P}^{\rho}_{RW}$ the law of $\eta^\rho$ defined on an appropriate probability space.
We also write $\mathbf{E}^\rho_{RW}$ for the corresponding expectations.
It is well-known that, for any $\lambda>0$, the Poisson product distribution with parameter $\lambda$ is an invariant measure for this process.
Hence, under $\mathbf{P}^{\rho}_{RW}$, and for each fixed $t\ge0$,  $(\eta^\rho_{t}(x))_{x\in\mathbb{Z}}$ is a collection of independent Poisson random variables with parameter $\lambda$.

Note that it is possible to couple the collections of i.i.d.\ Poisson random variables $\left(\eta^\rho_0(x),x\in\mathbb{Z}\right)$ for different parameters $\rho \in (0,1)$ in such a way that $\eta^\rho_0(x) \geq \eta_0^{\rho'}(x)$ for every $x\in\mathbb{Z}$ whenever $\rho' < \rho$.
The dynamics defined above preserves this property for every $t>0$.

\begin{remark}
  One should note that the environment we define here may not be the most natural.
   Indeed, it would be a priori simpler to consider that particles perform independent continuous-time simple random walks, instead of considering discrete-time lazy random walks.
  This choice was done in order to use previously proved results that have been established for the discrete-time case only.
  In fact, all the proofs we present here would work almost verbatim in the continuous case, at the exception of the final step of the paper where we use the regeneration structure and the results of \cite{RWonRW}.
  Nevertheless, as mentioned in \cite{RWonRW}, we believe that it is possible to adapt these results to the continuous case (see \cite{BR12} for similar statements in different context).
\end{remark}

\subsection{The random walker}\label{ss:randomwalker}
In this section, we define a discrete-time random walker $X$ evolving on the SSEP or on the PCRW.

We fix two transition probabilities $p_\bullet,p_\circ\in(0,1)$.
Let $\eta^\rho$ be distributed under either $\mathbf{P}^\rho_{EP}$ or $\mathbf{P}^\rho_{RW}$.
Conditioned on $\eta^\rho=\eta$, we define $(X_n)_{n\ge0}$ such that $X_0=0$ a.s.\ and, for $n\ge0$,
\begin{itemize}
\item if $\eta_n(x) > 0$, then $X_{n+1}=X_n +1$ with probability $p_{\bullet}$  and $X_{n+1}=X_n -1$ with probability $1 - p_{\bullet}$;
\item if $\eta_n(x) = 0$, then $X_{n+1}=X_n +1$ with probability $p_{\circ}$  and $X_{n+1}=X_n -1$ with probability $1 - p_\circ$.
\end{itemize}
We will denote $P^\eta_{p_\bullet,p_\circ}$, or simply $P^\eta$, the {\it quenched law} of $X$, i.e.~the law of $X$ on a fixed environment $\eta^\rho=\eta$. 
We will denote $\mathbb{P}^\rho_{p_\bullet,p_\circ}$, or simply $\mathbb{P}^\rho$, the {\it annealed law} of the walk, in other words, the semi-product $\mathbf{P}_{EP}^\rho\times P^\eta$ or $\mathbf{P}_{RW}^\rho\times P^\eta$.

\begin{remark}
In Section \ref{s:coupled_rw}, we will give an alternative definition of $X$ through a graphical construction which will couple realizations with different starting positions and will be very useful in the course of the proofs.
\end{remark}

Note that either $p_\bullet\ge p_\circ$ or $1-p_\bullet\ge 1-p_\circ$ must hold, hence (by flipping the integer line if necessary), we may assume without loss of generality throughout the paper
\begin{equation}\label{e:probassump}
p_\bullet\ge p_\circ.
\end{equation}
Although there is no loss of generality in imposing this assumption, we will make some statements that rely on it.
In case \eqref{e:probassump} does not hold, then the symmetric statements would hold.

\subsection{Main theorems}
\label{s:main}

In this section we provide the precise statements of our main results.

\begin{theorem}\label{t:speed}
Consider the environment $\eta^\rho$ with density $\rho\in(0,1)$ distributed under either the measure $\mathbf{P}^\rho_{EP}$ or $\mathbf{P}^\rho_{RW}$ and assume that $\eqref{e:probassump}$ holds.
There exists a deterministic non-decreasing function $v\colon (0,1) \to \mathbb{R}$ such that
\begin{equation}\label{mainspeed}
\frac{X_n}{n} \to v(\rho), \qquad \mathbb{P}^{\rho}-\text{almost surely,}
\end{equation}
for every $\rho\in(0,1)\setminus\{\rho_-,\rho_+\}$,  where
\begin{equation}\label{e:defrho+-}
  \begin{split}
    \rho_- & := \sup\{ \rho \in (0, 1): v(\rho) < 0 \}, \\
    \rho_+ & := \inf\{ \rho \in (0, 1): v(\rho) > 0 \}.
  \end{split}
\end{equation}
If $v(\rho_-)=0$, resp.~$v(\rho_+)=0$, then \eqref{mainspeed} holds for $\rho=\rho_-$, resp.~for $\rho=\rho_+$.\\
Moreover, for every $\rho \not \in [\rho_-, \rho_+]$, we have a functional central limit theorem for $X_n$ under $\mathbb{P}^\rho$, that is
\begin{equation}
\left(\frac{X_{\lfloor nt\rfloor}-ntv(\rho)}{\sqrt{n}}\right)_{t\ge0} \stackrel{(d)}{\to} \left(B_t\right)_{t\ge0},
\end{equation}
where $\left(B_t\right)_{t\ge0}$ is a non-degenerate Brownian motion and where the convergence in law holds in the Skorohod topology.
\end{theorem}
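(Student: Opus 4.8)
The plan is to follow the overview of the proof sketched in the introduction, assembling the pieces in the order the authors indicate. First I would set up the graphical construction of Section~\ref{s:coupled_rw} so that random walks started from all space-time points are coupled monotonically in the environment, and record the lateral decoupling inequality (Proposition~\ref{p:decouple}) together with the sprinkling/density-change version. Then I would introduce the deterministic upper and lower speeds $v_+(\rho)$ and $v_-(\rho)$ via the limits in \eqref{e:v_+}--\eqref{e:v_-}; by a Fekete-type subadditivity argument along subsequences one gets that these are well-defined and monotone in $\rho$. The first substantial renormalization step (Lemma~\ref{l:deviation_interp} and Theorem~\ref{l:v_+=v_-}) uses the lateral decoupling: one defines ``bad boxes'' where the walk moves faster than $\max(v_+(\rho),0)+\varepsilon$ (or slower than $\min(v_-(\rho),0)-\varepsilon$), shows a bad box at scale $L_{k+1}$ forces two bad boxes at scale $L_k$ that are laterally separated (hence essentially independent by decoupling), and runs the standard triggered cascade to get stretched-exponential decay of these deviation probabilities, which in turn forces $v_+(\rho)=v_-(\rho)=:v(\rho)$ and establishes the LLN whenever $v(\rho)=0$.

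The remaining case is $v(\rho)>0$, where (1') does not by itself control the probability of moving slower than $v(\rho)$. Here I would fix a reference density just above $\rho_+$, say $\rho_++\varepsilon$ with $v(\rho_++\varepsilon)>0$, and prove the ballisticity estimate Proposition~\ref{p:ballisticity}: for every $\rho>\rho_++\varepsilon$ the probability that $X_t<(v(\rho_++\varepsilon)-\delta)t$ decays fast. The mechanism is again renormalization, but now crucially combined with sprinkling --- a bad (slow) event for the walk in the denser environment $\rho$ is compared, after a small density reduction, to two laterally separated slow events at density slightly below $\rho$; the sprinkling lets one absorb the dependence at the cost of a controlled change in density, and monotonicity of $v$ keeps the comparison speeds ordered. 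This yields a linear lower bound $X_n \geq c n$ with overwhelming probability, i.e.\ genuine ballisticity, for all $\rho>\rho_+$ (and symmetrically a linear upper bound $X_n\leq -cn$ for $\rho<\rho_-$).

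Once ballisticity is in hand, the last block is to invoke the regeneration-time machinery: for the SSEP the construction of \cite{francoiss} and for the PCRW that of \cite{RWonRW}. Ballisticity guarantees that regeneration times exist, have good (polynomial or exponential) tail bounds, and that the increments $(X_{\tau_{k+1}}-X_{\tau_k}, \tau_{k+1}-\tau_k)$ are i.i.d.\ (after the first). A standard renewal LLN then gives $X_n/n\to v(\rho)\neq 0$ almost surely, identifying the limit with the previously constructed $v(\rho)$; and the functional CLT follows from Donsker's theorem applied to the regeneration increments, using the variance finiteness coming from the tail bounds, with the usual interpolation argument to control the walk between regeneration times and upgrade to convergence in the Skorohod topology. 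Monotonicity of $v$ on all of $(0,1)$ then follows from the coupling: a denser environment has $X_n$ stochastically larger at every step, so $v$ is non-decreasing, and the claims about $\rho_-,\rho_+$ in \eqref{e:defrho+-} are immediate from this monotonicity together with the fact that $v(\rho)\neq 0$ outside $[\rho_-,\rho_+]$. The main obstacle I expect is Proposition~\ref{p:ballisticity}: making the sprinkled renormalization close --- i.e.\ choosing scales $L_k$, the density decrements at each scale so that they sum to something small, and the deviation thresholds so that the recursion is contracting --- is the delicate technical heart, because one must simultaneously keep the comparison speeds monotone, keep the total sprinkling finite, and retain enough lateral separation for the decoupling to apply.
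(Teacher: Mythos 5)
Your proposal correctly reproduces the overall architecture — graphical coupling, lateral decoupling, sprinkling, the $v_\pm$ quantities, the ballisticity step, and the regeneration input — but there is a genuine gap at the crucial step $v_+(\rho) = v_-(\rho)$.

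You claim that running the renormalization cascade to get stretched-exponential decay of the probabilities $p_H((v_+\vee 0)+\varepsilon)$ and $\tilde p_H((v_-\wedge 0)-\varepsilon)$ (Lemma~\ref{l:deviation_interp}) ``in turn forces $v_+(\rho)=v_-(\rho)$.'' It does not. Suppose $v_+>v_->0$: then $(v_+\vee 0)=v_+$ and $(v_-\wedge 0)=0$, so the lemma controls only deviations above $v_+$ and below $0$, and says nothing about the gap $(v_-,v_+)$ where the contradiction would have to live. The only thing the deviation estimate gives ``for free'' is Corollary~\ref{l:v_+<=v_-}, i.e.\ $v_-\le v_+$. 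Closing the gap to equality is the entire content of Section~\ref{s:threat_points}: the paper argues by contradiction, introducing \emph{traps} (points from which the walk is slowed to speed $\le v_-+\delta$) and \emph{threatened points} (points lying below a string of potential traps along direction $(v_+,1)$), showing that threatened points are overwhelmingly likely (Lemma~\ref{l:threatened}, again by a laterally-decoupled renormalization but with a different combinatorial structure), that a walk started near a threatened point is delayed (Lemma~\ref{l:delays2}), and finally bootstrapping the resulting small delay through a second cascade (Proposition~\ref{prop:final_delay}) to contradict the definition of $v_+$. This trapped/threatened machinery, adapted from \cite{BHT18}, cannot be subsumed under the deviation cascade you describe. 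In addition, a minor point: the well-definedness of $v_\pm$ in \eqref{e:v_+}--\eqref{e:v_-} does not come from a Fekete subadditivity argument; they are simply an infimum/supremum of a set of real speeds, trivially well-defined, and their monotonicity in $\rho$ comes from the coupling of environments, not from subadditivity.

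The rest of your outline — using sprinkling in Proposition~\ref{p:ballisticity} to convert $v(\rho_++\varepsilon)>0$ into a uniform linear lower bound for all $\rho>\rho_++2\varepsilon$, then feeding that ballisticity into the regeneration structures of \cite{RWonRW} (PCRW) and \cite{francoiss} (SSEP) to get the LLN with the limit identified as $v(\rho)$ and the functional CLT — matches the paper's Sections on sprinkling and Section~\ref{billburr}, and your identification of $\rho_\pm$ and the monotonicity of $v$ from the coupling is the paper's argument as well.
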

In \eqref{e:defrho+-} we use the convention that $\inf{\emptyset} = 1$ and $\sup{\emptyset}=0$.

\begin{remark}
We believe that the speed $v(\cdot)$ is continuous on $(0,\rho_-)\cup(\rho_+,1)$.
Nevertheless, to prove so, one should adapt the definitions of regeneration times introduced in \cite{RWonRW} and in \cite{francoiss}. 
In Section \ref{billburr}, we choose to use the renewal structure from \cite{RWonRW} and \cite{francoiss} as they are. Although we believe that this does not constitute a big conceptual step, it seems quite technical to implement these modifications.
\end{remark}

Note that, in the previous theorem, we do not claim that $\rho_- \neq \rho_+$ and neither that $\rho_-$ and $\rho_+$ are necessarily discontinuity points of $v$.
In case $v$ happens to be continuous at these points (which we strongly expect to be true), then we have a law of large numbers for every density $\rho$.
It is indeed an interesting question whether there are examples of environments for which $v$ is discontinuous. Indeed, the Mott variable-range hopping is studied in \cite{nina}, where Example 2, p.~7, provides an environment and a walk such that the speed, as a function of a certain parameter, has a jump from $0$ to some positive value. Let us roughly describe it: the environment is a point process on the real line where points are randomly spaced, according to some density $1/\gamma$.
On this environment, one can define a random walk which, on the one hand, is more likely to jump on points that are closer to it and, on the other hand, has a bias (denoted $\lambda$ in \cite{nina}) to the right. In \cite{nina}, it is proved that this walk is transient to the right with a speed that jumps from $0$ to a positive value at density $1/\gamma=1/(2-\lambda)$.
Discontinuities of the speed of the walk with respect to the density have also been observed in \cite{BHSST2}.

We are currently unable to study the fluctuations of the random walker for densities in $[\rho_-, \rho_+]$.
It is actually expected that $\rho_- = \rho_+$, but the order of fluctuations is still a controversial issue in the literature, see \cite{Avena2012}, \cite{PhysRevE.97.042116}.
It is a very interesting open problem to determine whether the random walk is diffusive or not in this regime.

We now move to the symmetric case on the SSEP.

\begin{theorem}
\label{l:exclusion_symmetric}
Consider the random walk $(X_n)_n$ on the simple symmetric exclusion process.
If $p_\bullet = 1-p_\circ$ and $\rho=1/2$, then $X_n/n\to 0$, $\mathbb{P}^\rho$-almost surely.
\end{theorem}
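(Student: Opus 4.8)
The plan is to exploit a symmetry of the model that pairs, in distribution, trajectories going right with trajectories going left, so that $v(1/2)$ must equal its own negative and hence vanish. Consider the map on occupation configurations that swaps particles and holes: $\Psi(\eta)_t(x) = 1 - \eta_t(x)$. Since the SSEP dynamics exchanges occupations across edges, it commutes with $\Psi$, and since the Bernoulli($1/2$) product measure is invariant under $\Psi$, the law $\mathbf{P}^{1/2}_{EP}$ is invariant under $\Psi$. Now compose with the spatial reflection $R\colon x \mapsto -x$. Under $\eta \mapsto \Psi(R\eta)$, an occupied site of $R\eta$ becomes a hole and vice versa, and the reflection turns right-jumps into left-jumps. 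Concretely: on a site that is occupied the original walk jumps right with probability $p_\bullet$; after reflection this becomes a left-jump with probability $p_\bullet$, i.e.\ a right-jump with probability $1-p_\bullet = p_\circ$; and after the particle/hole swap that site looks empty, on which the walk should indeed jump right with probability $p_\circ$. The hypothesis $p_\bullet = 1-p_\circ$ is exactly what makes these two bookkeepings agree. Thus the quenched law of $-X$ on the environment $\Psi(R\eta)$ coincides with the quenched law of $X$ on $\eta$, and since $\mathbf{P}^{1/2}_{EP}$ is invariant under $\eta \mapsto \Psi(R\eta)$, we get the distributional identity $(X_n)_{n\ge 0} \stackrel{(d)}{=} (-X_n)_{n\ge 0}$ under $\mathbb{P}^{1/2}$.

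Given this symmetry, the argument is short. By Theorem \ref{t:speed}, the speed $v(1/2)$ exists as soon as $1/2 \notin \{\rho_-, \rho_+\}$, in which case the almost sure limit $X_n/n \to v(1/2)$ together with the identity in law forces $v(1/2) = -v(1/2)$, hence $v(1/2)=0$. It therefore remains to handle the possibility that $1/2$ equals $\rho_-$ or $\rho_+$. Here I would invoke the monotonicity of $v$ and the definitions \eqref{e:defrho+-}: if $1/2 = \rho_+ = \inf\{\rho : v(\rho)>0\}$, then $v(\rho) \le 0$ for all $\rho < 1/2$; but by the same symmetry argument applied at a general density — replacing $\Psi$ by the particle/hole swap alone sends density $\rho$ to density $1-\rho$ — one obtains that the walk on an environment of density $\rho$ with parameters $(p_\bullet,p_\circ)$ has the same law as $-X$ on an environment of density $1-\rho$ with parameters $(1-p_\circ, 1-p_\bullet) = (p_\bullet, p_\circ)$, so $v(1-\rho) = -v(\rho)$ wherever both speeds are defined. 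Combined with monotonicity this gives $v(\rho) \le 0$ for $\rho<1/2$ and $v(\rho) \ge 0$ for $\rho > 1/2$, and by \eqref{e:defrho+-} one reads off $\rho_- = \rho_+ = 1/2$ with $v(\rho_-) = v(\rho_+) = 0$; the last clause of Theorem \ref{t:speed} then yields \eqref{mainspeed} at $\rho = 1/2$ with limit $v(1/2) = 0$.

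The main obstacle is making the environment symmetry $\eta \mapsto \Psi(R\eta)$ rigorous at the level of the joint (annealed) law rather than just heuristically: one must check that it intertwines the graphical constructions of the environment and of the walk, i.e.\ that the Poisson clocks on edges and the walker's coin flips can be relabeled consistently so that the pushforward of $\mathbb{P}^{1/2}$ under $(\eta, X) \mapsto (\Psi(R\eta), -X)$ is again $\mathbb{P}^{1/2}$. This is where the graphical construction of Section \ref{s:coupled_rw} is needed, and where the hypotheses $\rho=1/2$ and $p_\bullet = 1-p_\circ$ enter in an essential and non-negotiable way. Everything else — deducing $v=0$ from a self-mirroring speed, and pinning down the exceptional densities — is soft, using only Theorem \ref{t:speed} and the monotonicity of $v$.
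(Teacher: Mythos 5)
Your symmetry is the same one the paper uses. The paper splits it into two pieces --- the particle/hole flip $\bullet\leftrightarrow\circ$, which at $\rho=1/2$ gives $\mathbb{P}^{1/2}_{p,q}=\mathbb{P}^{1/2}_{q,p}$, and the spatial reflection, which for $q=1-p$ gives $\mathbb{P}^\rho_{p,q}[A_{H,w}(-v)]=\mathbb{P}^\rho_{q,p}[\tilde A_{H,w}(v)]$ --- and works directly at the level of $v_+$ and $v_-$: combining the two identities yields $v_+(1/2,p_\bullet,p_\circ)=-v_-(1/2,p_\bullet,p_\circ)$, which together with Theorem~\ref{l:v_+=v_-} forces $v(1/2)=0$, and then Theorem~\ref{t:speed_zero} finishes. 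Your $\eta\mapsto\Psi(R\eta)$ is exactly this composite, so the core of the argument is identical; the observation $v(1-\rho)=-v(\rho)$ is a slightly more general byproduct than the paper records, but it is the same two symmetries.

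Where you diverge is the endgame, and there you take a detour with one unjustified step. You route the conclusion through Theorem~\ref{t:speed}, which forces a case split on whether $1/2\in\{\rho_-,\rho_+\}$. In the second case you assert, from $v(\rho)\le 0$ for $\rho<1/2$ and $v(\rho)\ge 0$ for $\rho>1/2$, that \eqref{e:defrho+-} yields $\rho_-=\rho_+=1/2$. That does not follow: monotonicity plus $v(1/2)=0$ only gives $\{\rho:v(\rho)<0\}\subseteq(0,1/2)$ and $\{\rho:v(\rho)>0\}\subseteq(1/2,1)$, hence $\rho_-\le 1/2\le\rho_+$, and nothing rules out $v$ vanishing on a whole interval containing $1/2$. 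Fortunately you never actually need this: in the branch where $1/2=\rho_+$ (resp.\ $1/2=\rho_-$), you already have $v(1/2)=0$, so the last clause of Theorem~\ref{t:speed} applies as stated. Better still, the entire case analysis is superfluous --- once you know $v(1/2)=0$, which you do establish via the symmetry applied to the definitions of $v_\pm$, Theorem~\ref{t:speed_zero} gives $X_n/n\to 0$ almost surely in one line, with no reference to $\rho_\pm$. That is exactly the shortcut the paper takes, and I would encourage you to excise the $\rho_\pm$ bookkeeping and replace the appeal to Theorem~\ref{t:speed} by a direct appeal to Theorem~\ref{t:speed_zero}.
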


Note that by symmetry arguments, if the speed exists then it must be zero.
However, proving Theorem~\ref{l:exclusion_symmetric} is not a trivial task.
Intuitively, one may think that, in order to prove the result, it is necessary to control the trajectory of a walk which comes back very often to its starting position and thus interact with the same particles of the environment a large number of times.
These interactions would create long-range time-dependencies in the trajectory and complicate the analysis.
Our proof nicely gets around this issue and we rather show that trajectories that go far away at positive speed are very unlikely.

\begin{remark}
In \cite{Renato}, the author proves a linear lower bound for the random walk on the exclusion process which is strictly larger than $2p_{\circ}-1$ provided that $p_\bullet>p_\circ$. Thus, one could almost conclude that, for the random walk on the exclusion process with $p_{\bullet}>1/2$ and $p_\circ=1/2$, the law of large numbers (and the CLT) holds with a positive speed for any density $\rho>0$. The reason why we cannot easily state this, is that  \cite{Renato} deals with a continuous time random walk whereas we work with a discrete time random walk. Nevertheless, the proof of \cite{Renato} may be adapted to our case. Even though this seems to be a natural result, there is, to the best of our knowledge, no simple proof of this fact (or even of transience), and the current proof relies on an elaborate multiscale analysis developped by Kesten and Sidoravicius \cite{KS03}.
\end{remark}

\section{Graphical construction and backbone of the proof}
\label{s:strategy}

The first goal of this section is to give a graphical construction of a family of coupled random walks, which we will use extensively throughout the paper. The second goal of the section is to provide the main statements that lead to the proof of our main results, Theorem \ref{t:speed} and Theorem~\ref{l:exclusion_symmetric}.

We will denote $c_0, c_1, \ldots$ and $k_0, k_1, \ldots$  positive numbers whose values are fixed at their first appearance in the text.
These constants may depend on the law of the environment and of the walk.
When a constant depends on other parameters, we shall indicate this at its first appearance, for instance, $c_i(v,\rho)$, is a constant whose value depends on $v$ and $\rho$.
For later appearances, we may omit some of the dependencies and simply write $c_i$ or $c_i(v)$, for example.
Let us define the following notation: for any $w=(x,t)\in\mathbb{R}^2$, we let
\begin{equation}
\pi_1(w)=x\text{ and }\pi_2(w)=t.
\end{equation}
For two real numbers $s,t\in\mathbb{R}$, we denote $s\wedge t$ and $s\vee t$ respectively the minimum and the maximum of $s$ and $t$.
For the rest of the paper, unless precised otherwise, we assume that the law of the environment is either $\mathbf{P}^\rho_{EP}$ or $\mathbf{P}^\rho_{RW}$, with $\rho\in(0,1)$, and we fix $0<p_\circ\le p_\bullet<1$  (thus \eqref{e:probassump} holds), without loss of generality.
\begin{remark}
In our proofs, we could allow for $p_\circ=0$ or $p_\bullet=1$, but we need to rule this out in order to use  the renewal structure from \cite{RWonRW} and \cite{francoiss}, where the authors need this assumption.
\end{remark}

\subsection{Coupled continuous space-time random walks}\label{s:coupled_rw}

Here we use a graphical construction in order to define a family of coupled continuous space-time random walks $(X^w_t,w\in{\mathbb{R}^2},t\ge0)$.
We now informally state the properties of this coupling that will be useful later on.

Each random walk $X^w :=(X^w_t)_{t \geq 0}$ will be such that, $X^w_0=\pi_1(w)$ almost surely.
Furthermore, $X^{w'}$ and $X^w$ coalesce if they ever intersect, that is if $X^{w'}_{t'}=X^w_t$  for some $w,w'\in{\mathbb{R}^2}$ and $t,t'\ge0$, then $X^{w'}_{t'+s}=X^w_{t+s}$ for all $s\ge0$.
Moreover, $(X^{(0,0)}_n)_{n\in\mathbb{N}}$ has the same law as our random walker $(X_n)_{n\in \mathbb{N}}$.

Fix a value $\rho \in (0,1)$ and a realization of the environment $\eta^\rho$. Note that, as $(X_n)_{n\in\mathbb{N}}$ is assumed to be a nearest-neighbor random walk, $X_{2n}\in 2\mathbb{Z}$ and $X_{2n+1}\in \left(1+2\mathbb{Z}\right)$, for every $n\ge0$.
Define the discrete lattice
\begin{equation}
\label{e:def_space_time}
\mathbb{L}_d:=\left( 2\mathbb{Z} \right)^2\cup  \left( (1,1)+\big(2\mathbb{Z}\right)^2 \big),
\end{equation}
where the sum in the RHS stands for the the shift of the $2\mathbb{Z}$ lattice by the vector $(1,1)$.
We will  define the random walks $X^w$ first on this lattice before interpolating them to the whole plane.
For that, we let $\left(U_w\right)_{w\in\mathbb{L}_d}$ be a collection of i.i.d.~uniform random variables on $[0,1]$.
For any  $w=(x,n)\in \mathbb{L}_d$, we set $X^w_0=x$ and define $X^w_1$ in the following manner:
\begin{equation}
X^w_1 =
\begin{cases}
x+2\, \mathbf{1}_{\{U_w\le p_\bullet\}}-1, \text{ if $\eta^\rho_n(x)>0$};\\
x+2\, \mathbf{1}_{\{U_w\le p_\circ\}}-1, \text{ if $\eta^\rho_n(x)=0$}.
\end{cases}
\end{equation}
%\begin{itemize}
%\item if $\eta^\rho_n(x)>0$, then $X^w_1=x+2\mathbf{1}_{\{U_w\le p_\bullet\}}-1$;
%\item if $\eta^\rho_n(x)=0$, then $X^w_1=x+2\mathbf{1}_{\{U_w\le p_\circ\}}-1$.
%\end{itemize}
For any integer $m\ge1$ and we define by induction \begin{equation}
X^w_m=X^w_{m-1}+X^{(X^w_{m-1},m-1)}_1.
\end{equation}
This defines the coupled family $(X^w_n,w\in\mathbb{L}_d,n\in\mathbb{N})$.
Note that $(X^w_n,\pi_2(w)+n)_{n\in \mathbb{N}}$ evolves on $\mathbb{L}_d$.

Having defined the random walker on $\mathbb{L}_d$, we extend its definition to all possible starting points in $\mathbb{R}^2$.
But first we extend it to a continuous version of $\mathbb{L}_d$, defined as follows
\begin{equation}
\mathbb{L}=\{w+ t(1,1), w\in\mathbb{L}_d, 0\le t<1\}\cup \{w+ t(-1,1), w\in\mathbb{L}_d, 0\le t<1\},
\end{equation}
see Figure~\ref{f:L_d}.

For $t\in\mathbb{R}^+$ and $w\in\mathbb{L}_d$, we define
\begin{equation}\label{badcompany}
X^w_t=X^w_{\lfloor t\rfloor}+\left(t-\lfloor t\rfloor\right)\big(X^w_{\lfloor t\rfloor+1}-X^w_{\lfloor t\rfloor}\big).
\end{equation}
This defines the coupled family $(X^w_t,w\in\mathbb{L}_d,t\in\mathbb{R}_+)$.
Note that $(X^w_t,\pi_2(w)+t)_{t\in \mathbb{R}_+}$, evolves on $\mathbb{L}$.

\begin{figure}[h]
  \center
  \includegraphics[scale=.5]{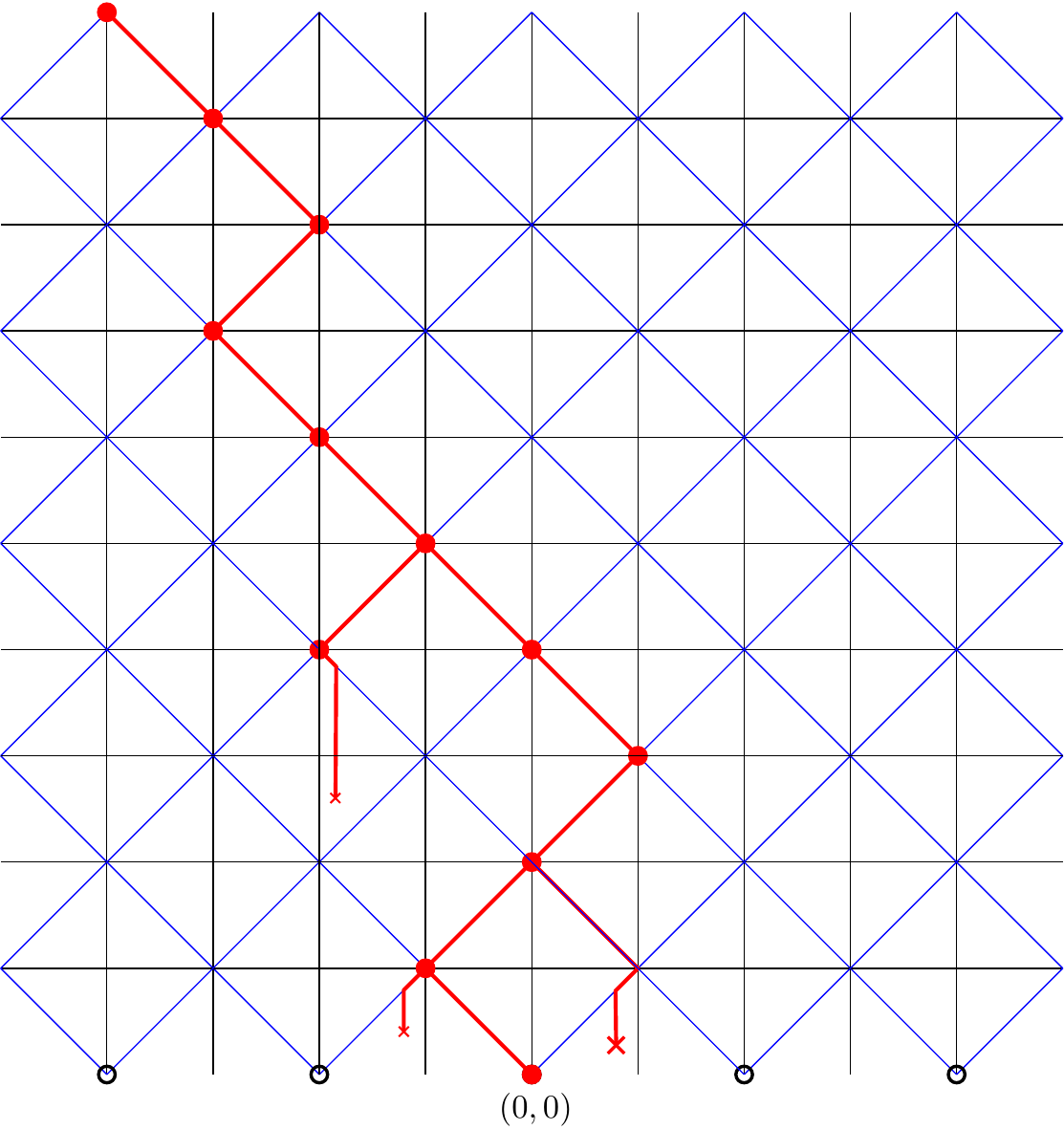}
  \caption{Colliding trajectories on the whole plane and on the lattice $\mathbb{L}_d$ (in blue).}
  \label{f:L_d}
\end{figure}

From a starting point $w\in\mathbb{L}\setminus\mathbb{L}_d$, intuitively speaking, we let $X_t^w$ follow the only path such that $(X_t^w, \pi_2(w)+t)$ remains on $\mathbb{L}$ until it hits $\mathbb{L}_d$, after which it follows the rule given by \eqref{badcompany}.
More precisely, given $w \in \mathbb{L} \setminus \mathbb{L}_d$, for any $s>0$, note that $w+s\big((-1)^{k},1\big)\in\mathbb{L}$ where $k=k(w)=\lfloor \pi_1(w)\rfloor+\lfloor \pi_2(w)\rfloor$ and define
\begin{equation}
t_0=\min\big\{s\ge 0: w+s((-1)^{k},1)\in\mathbb{L}_d\big\}.
\end{equation}
Then we define
\begin{equation}
\label{horriblecompany}
X^w_t =
\begin{cases}
\pi_1(w)+(-1)^{k}t  & \text{ if $0\le t < t_0$};\\
X^{(X^w_{t_0},\pi_2(w)+t_0)}_{t-t_0} & \text{ if $t\ge t_0$}.
\end{cases}
\end{equation}

It remains to construct the random walks starting from points $w=(x,t)\in\mathbb{R}^2 \setminus \mathbb{L}$.
The idea is very simple: it is going to be the trajectory such that $(X^w_t,\pi_2(w)+t)$ move up along direction $(0,1)$ until hitting $\mathbb{L}$ and, from this point on, follow the corresponding trajectory in $\mathbb{L}$ as defined in \eqref{horriblecompany}.

More precisely, consider $w=(x,t)\in\mathbb{R}^2$ and let
\begin{equation}
s_0=\min\{s\ge 0: (x,t+s)\in\mathbb{L}\}.
\end{equation}
Then we define
\begin{equation}
\label{awfulcompany}
X^w_s =
\begin{cases}
x &  \text{ for all $0\le s < s_0$};\\
X^{(x,\pi_2(w)+s_0)}_{s-s_0} &\text{ for all $s\ge s_0$}.
\end{cases}
\end{equation}

Equations \eqref{badcompany}, \eqref{horriblecompany} and \eqref{awfulcompany} define the coupled family $(X^w_t,w\in\mathbb{R}^2,t\in \mathbb{R}_+)$, such that the points $(X^w_t, \pi_2(w)+t)$ always remain on $\mathbb{L}$, after the first time they hit $\mathbb{L}$.

It is important for us that, for any $w\in\mathbb{R}^2$ and for any $t\ge s\ge 0$, we have
\begin{equation}\label{e:lipschitz}
  \left|X^w_t - X^w_s\right| \le t - s.
\end{equation}

\begin{remark}\label{r:shift}
It should be noted that the law of $(X_t^w)_{t\ge0}$, for $w\in\mathbb{R}^2$ is not invariant under shifts of $w$. Nevertheless, the processes $(X_t^w)_{t\ge0}$, $(X_t^{w+(1,1)})_{t\ge0}$ and $(X_t^{w+(-1,1)})_{t\ge0}$ have the same distribution.
Therefore, the law of the collection $\{(X_t^w)_{t\ge0} \colon w\in\mathbb{R}^2\}$, it fully determined by the law of $\{(X_t^w)_{t\geq 0} \colon w\in\mathcal{L}_1\}$, where
\begin{equation}\label{e:losange}
\mathcal{L}_1:=\{w \in\mathbb{R}^2:\ |\pi_1(w)|+|\pi_2(w)|\le1\}.
\end{equation}
\end{remark}

Finally, we will keep the notation $P^\eta_{p_\bullet,p_\circ}$ and $\mathbb{P}^\rho_{p_\bullet,p_\circ}$ or simply $P^\eta$ and $\mathbb{P}^\rho$, for the quenched and annealed joint laws of the family of random walks $(X^w_t,t\ge0,w\in\mathbb{R}^2)$, respectively.

The result below  states a useful monotonicity property for the collection of random walks defined above.
In the following, for $s,t\in\mathbb{R}$, we denote $s\vee t=\max(s,t)$.

\begin{proposition}\label{p:monotone}
For every $\rho\in(0,1)$, every $z,z'\in\mathbb{R}^2$ with $\pi_1(z')\le \pi_1(z)$ and $\pi_2(z)= \pi_2(z')$, we have that, almost surely,
\begin{equation}\label{e:monotone1}
  X^{z'}_{t}\le X^{z}_{t}\text{ for all }t\ge0.
\end{equation}
In fact, for every $z,z'\in \mathbb{R}^2$  such that $\pi_1(z')\le \pi_1(z)-\left|\pi_2(z')-\pi_2(z)\right|$,
\begin{equation}\label{e:monotone2}
  X^{z'}_{t+[(\pi_2(z)-\pi_2(z'))\vee0]}\le X^{z}_{t+[(\pi_2(z')-\pi_2(z))\vee0]}\text{ for all }t\ge0.
\end{equation}
\end{proposition}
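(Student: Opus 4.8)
The plan is to prove \eqref{e:monotone1} first and then deduce \eqref{e:monotone2} from it by a suitable time-shift argument, exploiting the coalescing structure of the family $(X^w_t)$ and the Lipschitz bound \eqref{e:lipschitz}.

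\textbf{Step 1: reduction to the lattice.} Using Remark~\ref{r:shift} it suffices to treat starting points in a fundamental domain, and by the very definition of $X^w$ for $w\notin\mathbb{L}$ (equations \eqref{horriblecompany} and \eqref{awfulcompany}), every trajectory is eventually fed into a trajectory started from a point of $\mathbb{L}_d$; moreover before that happens the trajectory moves along a deterministic path with slope $\pm1$. So I would first establish the ordering for $w,w'\in\mathbb{L}_d$ with the same time coordinate, then handle the initial deterministic segments separately. For two starting points $z=(x,n)$ and $z'=(x',n)$ in $\mathbb{L}_d$ with $x'\le x$, I would argue by induction on the number of steps $m$. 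The one-step map is monotone: conditionally on the environment, $X^z_1=x+2\mathbf 1_{\{U_z\le p_{\ast}\}}-1$ and $X^{z'}_1=x'+2\mathbf 1_{\{U_{z'}\le p_{\ast'}\}}-1$; if $x'<x$ then $x'+1\le x-1$ so the ordering is automatic regardless of the indicators, while if $x'=x$ the two walks are at the same site, use the \emph{same} variable $U_z=U_{z'}$, read the \emph{same} environment value, hence make the same jump and coalesce — here the assumption $p_\circ\le p_\bullet$ is not even needed since the two walks see identical data. Thus after one step we still have $X^{z'}_1\le X^z_1$, and by the inductive construction $X^w_m=X^w_{m-1}+X^{(X^w_{m-1},\,m-1)}_1$ the ordering propagates: either the walks have already met (and then stay together by coalescence) or they are at distinct sites and the one-step inequality keeps them ordered. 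Interpolating linearly via \eqref{badcompany} preserves the inequality for all real $t\ge0$.

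\textbf{Step 2: from $\mathbb{L}_d$ to $\mathbb{R}^2$, same time coordinate.} Given general $z,z'\in\mathbb{R}^2$ with $\pi_1(z')\le\pi_1(z)$ and $\pi_2(z')=\pi_2(z)=:t_0$, both trajectories first travel deterministically (vertically, then diagonally) until they hit $\mathbb{L}_d$. One checks that these deterministic segments preserve the horizontal ordering: the vertical segments keep the $x$-coordinates frozen, and the diagonal segments have slope $\pm1$, so by a short case analysis on the parities $k(w)$, together with the Lipschitz constraint (a trajectory's horizontal coordinate can change by at most the elapsed time), one never breaks $X^{z'}_t\le X^z_t$ before either trajectory reaches $\mathbb{L}_d$; and once one has reached $\mathbb{L}_d$ while the other is still on its deterministic segment, the slope-$\pm1$ bound again prevents a crossing. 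When both have reached $\mathbb{L}_d$, Step~1 applies (with a further vertical/diagonal adjustment if their hitting times of $\mathbb{L}_d$ differ — but one can always compare each of them to an intermediate point). This is the step I expect to be the most delicate: carefully checking that the piecewise-deterministic prelude defined by \eqref{horriblecompany}--\eqref{awfulcompany} cannot destroy the ordering, because of the various parity cases and the need to line up the (possibly different) times at which $z$ and $z'$ enter $\mathbb{L}_d$.

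\textbf{Step 3: the time-shifted version \eqref{e:monotone2}.} Assume WLOG $\pi_2(z)\ge\pi_2(z')$, write $\delta:=\pi_2(z)-\pi_2(z')\ge0$, so the hypothesis reads $\pi_1(z')\le\pi_1(z)-\delta$. The trajectory $X^{z'}$ run for time $\delta$ moves horizontally by at most $\delta$ in absolute value by \eqref{e:lipschitz}, hence $X^{z'}_\delta\le\pi_1(z')+\delta\le\pi_1(z)$, and the point $z'':=(X^{z'}_\delta,\pi_2(z'))+(0,\delta)=(X^{z'}_\delta,\pi_2(z))$ has the same time coordinate as $z$ and horizontal coordinate $\le\pi_1(z)$. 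By the coalescing/Markov structure of the family, $(X^{z'}_{\delta+s})_{s\ge0}$ coincides with $(X^{z''}_{s})_{s\ge0}$ (the walk restarted from where $z'$ is at time $\pi_2(z)$), and then \eqref{e:monotone1} applied to the pair $z'',z$ gives $X^{z''}_s\le X^z_s$ for all $s\ge0$. Re-expressing with $t:=s$ yields exactly $X^{z'}_{t+\delta}\le X^z_{t}$, which is \eqref{e:monotone2} in the case $\pi_2(z)\ge\pi_2(z')$; the other case is symmetric, swapping the roles and shifting $X^z$ instead. The only subtlety is to make sure the restart identity $(X^{z'}_{\delta+s})_s=(X^{z''}_s)_s$ holds exactly given the way intermediate (non-lattice) points are absorbed into lattice trajectories — but this is immediate from \eqref{horriblecompany}--\eqref{awfulcompany} since after time $\delta$ the trajectory $X^{z'}$ is already governed by the lattice rule from its current location.
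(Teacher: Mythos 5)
Your proposal is correct, and Step~3 is essentially the paper's proof of \eqref{e:monotone2} verbatim: use \eqref{e:lipschitz} to show that after running the walk from the earlier starting time for the time difference, the two walks are at the same time coordinate with the horizontal ordering preserved, then invoke \eqref{e:monotone1}. The interesting divergence is in Steps~1--2, and here the paper takes a noticeably shorter route.

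Where you build \eqref{e:monotone1} from the ground up — first an induction on lattice steps in $\mathbb{L}_d$ (using the shared uniforms and the fact that distinct same-time lattice points are at distance $\ge 2$), then a case analysis to stitch together the vertical and diagonal preludes of \eqref{horriblecompany}--\eqref{awfulcompany} — the paper dispenses with all of this by a single topological observation. Because the interpolated trajectories $t \mapsto X^z_t$ and $t \mapsto X^{z'}_t$ are continuous, start at the same time coordinate, and are ordered at $t=0$, they cannot cross without first touching (intermediate value theorem). And by construction, if they ever touch they are at the same absolute space-time point, from which the construction determines a unique forward path, so they coalesce and remain equal forever; hence a crossing is impossible. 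This argument subsumes the entire "delicate" parity/prelude-alignment analysis you flag in Step~2, since it never needs to track in which phase of the construction either trajectory is — only that both trajectories are continuous and that equality at a single time implies coalescence. Your route is valid and makes the coupling mechanism more explicit, which is pedagogically useful, but it is strictly more work, and the part you correctly identify as the hardest (aligning the non-lattice preludes) is exactly the part the paper's continuity argument makes unnecessary. One small remark: your observation that $p_\circ \le p_\bullet$ is not needed for this particular monotonicity is correct, and the paper's proof likewise does not use it.
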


\begin{figure}[h]
  \center
  \includegraphics[scale=1]{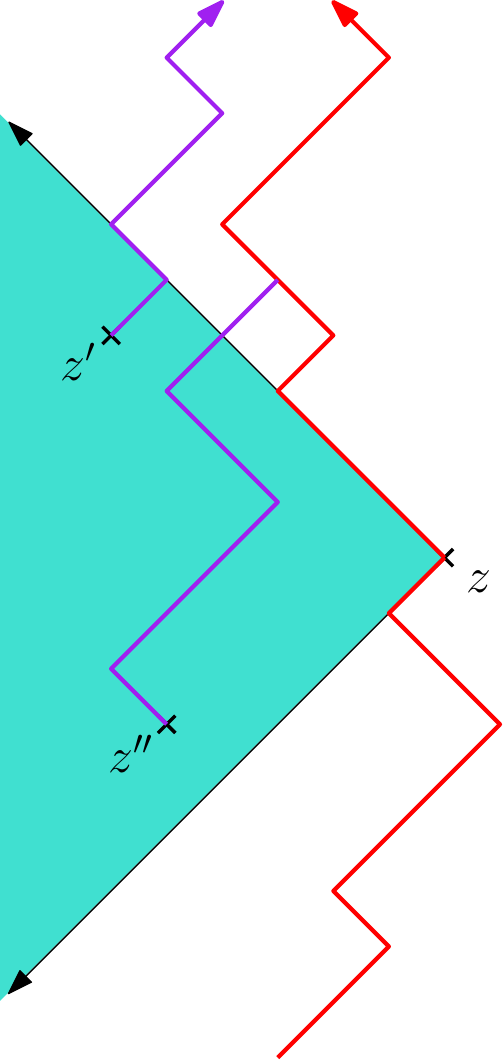}
  \caption{Proposition \ref{p:monotone} states that any trajectory started in the turquoise quadrant, e.g.~from $z'$ or $z''$, stays on the left of the trajectory started at $z$.}
  \label{f:Lip_plus}
\end{figure}

\begin{proof}
  First, we prove \eqref{e:monotone1}. This is a simple consequence of the fact that the two walks $X^{z}$ and $X^{z'}$ evolve in continuous time and space, and, as they start at points $z$ and $z'$ with same time coordinates ($\pi_2(z)= \pi_2(z')$), they cannot cross each other without being at the same position, i.e.~either $X^{z'}_{t}< X^{z}_{t}$ for all $t\ge0$, or there exists $t_0$ such that $X^{z'}_{t_0}= X^{z}_{t_0}$ and then, by construction, $X^{z'}_{t_0+s}= X^{z}_{t_0+s}$ for all $s\ge0$.

  Second, we prove \eqref{e:monotone2}.
  Assume first that $\pi_2(z')\ge\pi_2(z)$ and $\pi_1(z')\le \pi_1(z)-\left(\pi_2(z')-\pi_2(z)\right)$.
  By \eqref{e:lipschitz}, we have that
  \begin{equation}
    X^z_{\pi_2(z')-\pi_2(z)}\ge \pi_1(z)-\left(\pi_2(z')-\pi_2(z)\right)\ge \pi_1(z')= X^{z'}_0.
  \end{equation}
  The conclusion then follows from \eqref{e:monotone1}.

  Similarly, if $\pi_2(z')\le\pi_2(z)$ and $\pi_1(z')\le \pi_1(z)-\left(\pi_2(z)-\pi_2(z')\right)$. By \eqref{e:lipschitz}, we have that
  \begin{equation}
    X^{z'}_{\pi_2(z)-\pi_2(z')}\le \pi_1(z')+\left(\pi_2(z)-\pi_2(z')\right)\leq \pi_1(z) = X^{z}_0.
  \end{equation}
  The conclusion again follows from \eqref{e:monotone1}.
\end{proof}

\subsection{Structure of the proof}

In this section, we state the main propositions that lead to the proof of Theorem \ref{t:speed} and Theorem~\ref{l:exclusion_symmetric}. For this purpose, we need some more definitions.

Let us define the important event $A_{H, w}(v)$.
Intuitively speaking this event indicates that a random walk starting at $w$ after time $H$ had an average speed higher than $v$.
More precisely, for any $w \in \mathbb{R}^2$, any $H \in \mathbb{R}_+$ and $v \in \mathbb{R}$ let
\begin{equation}
  \label{e:A_m_v}
  A_{H,w}(v) := \Big[ \text{there exists $y \in \big(w+[0,H)\times\{0\}\big) $ s.t.\ $X^y_{H} - \pi_1(y) \geq v H$} \Big].
\end{equation}

In several places in the paper, we will bound the probability of $ A_{H,w}(v)$. It should be noted that the probability of this event depends on $w\in\mathbb{R}^2$, because both $ A_{H,w}(v)$ and the law of $X^w$ are not invariant when we shift $w$ by an arbitrary value in $\mathbb{R}^2$ (although they are invariant with respect to shifts in the lattice $\mathbb{L}_d$).
Nevertheless, by Remark \ref{r:shift}, it is enough to consider $w\in\mathcal{L}_1$ (defined in \ref{e:losange}).

We can now safely define
\begin{equation}
  \label{e:p_k}
  p_H(v, \rho, p_\bullet, p_\circ) :=\sup_{w\in \mathbb{R}^2}  \mathbb{P}^{\rho}_{p_\bullet, p_\circ} \big(A_{H,w}(v) \big)= \sup_{w\in \mathcal{L}_1}  \mathbb{P}^{\rho}_{p_\bullet, p_\circ} \big(A_{H,w}(v) \big).
\end{equation}
When there is no risk of confusion, we will write $  p_H(v)=  p_H(v, \rho)=  p_H(v, \rho, p_\bullet, p_\circ)$.

\begin{figure}
  \centering
    \begin{tikzpicture}[use Hobby shortcut]
      \draw (0, 0) rectangle (6, 2);
      \draw[thick] (2, 0) -- (4, 0);
      \draw[thick] (2, -.15) -- (2, .15);
      \draw[thick] (4, -.15) -- (4, .15);
      \draw (2.5, 0) .. (2.8, .4) .. (3.5, .8) .. (3.5, 1.2) .. (4, 1.6) .. (4.5, 2);
      \draw[dashed] (2.5, 0) -- (4.2, 2);
      \draw[fill, below right] (2.5, 0) circle (.05) node {$y = Y^y_0$};
      \draw[fill, above right] (4.5, 2) circle (.05) node {$Y^y_{H}$};
      \draw[fill, above left] (4.2, 2) circle (.05) node {$y + H (v, 1)$};
      \draw[right] (6, .5) node {$H$};
      \draw [decorate,decoration={brace,amplitude=10pt}] (6, -.5) -- (0, -.5) node [black, midway, yshift=-0.6cm] {$3 H$};
    \end{tikzpicture}
  \caption{An illustration of the event $A_{H, 0}(v)$.
  Starting from the point $y \in \big([0,H)\times\{0\}\big) \cap \mathbb{L}$ the walker attains an average speed larger than $v$ during the time interval $[0,H]$. Picture taken from \cite{BHT18}.}
  \label{f:event_A}
\end{figure}
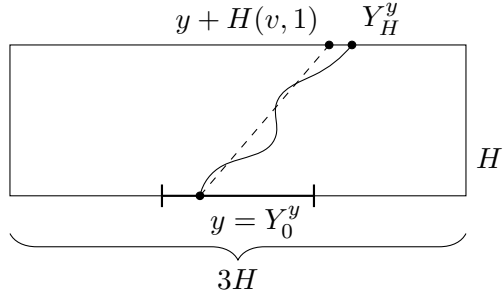
The following quantity is always well-defined and is key to our proofs of the main results.
  \begin{equation}
    \label{e:v_+}
    v_+(\rho, p_\bullet, p_\circ) := \inf \big\{ v \in \mathbb{R} \colon \liminf_{H \to \infty} p_H (v, \rho, p_\bullet, p_\circ) = 0 \big\}.
  \end{equation}
  Again, when there is no risk of confusion, we will write $ v_+=v_+(\rho)= v_+(\rho, p_\bullet, p_\circ)$. This quantity could be called  the {\it upper speed} of $X$. Indeed, for any $v>v_+$, it is unlikely that $X_t \ge vt$ for a growing sequence of $t$'s.
  On the other hand, if $v < v_+$, then $X_t \ge vt$, with probability bounded from below.

  Similarly, we define, for $w\in\mathbb{R}^2$, $v\in\mathbb{R}$ and $H\in\mathbb{R}_+$,
\begin{equation}
  \label{e:tilde_A}
  \tilde{A}_{H,w}(v):=\Big[\text{there exists $y \in \big(w+[0,H)\times\{0\}\big)$ with $X^y_{H} - \pi_1(y) \leq v H$}\Big].
\end{equation}
as well as
\begin{equation}\label{e:tildep_H}
  \tilde{p}_H(v, \rho, p_\bullet, p_\circ) :=    \sup_{w\in\mathbb{R}^2}\mathbb{P}^{\rho}_{p_\bullet, p_\circ} \big(\tilde{A}_{H,w}(v) \big)=\sup_{w\in\mathcal{L}}\mathbb{P}^{\rho}_{p_\bullet, p_\circ} \big(\tilde{A}_{H,w}(v) \big).
\end{equation}
We also define the {\it lower speed} of $X$ as
\begin{equation}
  \label{e:v_-}
  v_-(\rho, p_\bullet, p_\circ) := \sup \big\{ v \in \mathbb{R} \colon \liminf_{H \to \infty} \tilde{p}_H(v, \rho, p_\bullet, p_\circ) = 0 \big\},
\end{equation}
When there is no risk of confusion, we will write $ \tilde{p}_H(v)= \tilde{p}_H(v, \rho)= \tilde{p}_H(v, \rho, p_\bullet, p_\circ)$ and $ v_-=v_-(\rho)= v_-(\rho, p_\bullet, p_\circ)$.

Note that, as \eqref{e:probassump} is assumed, we have that, for $0\le p_\circ\le p_\bullet\le 1$, the functions $\rho\mapsto v_+(\rho,p_\bullet,p_\circ)$ and $\rho\mapsto v_-(\rho,p_\bullet,p_\circ)$ are non-decreasing.\\
Let us emphasize that
\begin{equation}
v_-, v_+ \in [-1, 1],
\end{equation}
by \eqref{e:lipschitz}, but it is a priori not guaranteed that $v_-\le v_+$. As we will see, this is in fact one consequence of the next lemma, see Corollary~\ref{l:v_+<=v_-}.

Roughly speaking, the next lemma states that, the probability of the random walk to deviate above $v_+(\rho)\vee 0$ or below $v_-(\rho)\wedge 0$ over large time scales decays very fast.

\nc{c:deviation}
\begin{lemma}
  \label{l:deviation_interp}
For every $\epsilon > 0$, there exists a constant $\uc{c:deviation}=\uc{c:deviation}(\epsilon, \rho)$ such that
  \begin{equation}
  \label{e:deviation_interp}
    \begin{split}
      & p_H((v_+(\rho)\vee0) + \epsilon, \rho) \leq \uc{c:deviation}\exp\big(-2\ln^{3/2}H\big)\\
      & \tilde{p}_H ((v_-(\rho)\wedge0) - \epsilon, \rho)\leq \uc{c:deviation}\exp\big(-2\ln^{3/2}H \big),
    \end{split}
  \end{equation}
  for every $H\ge1$.
\end{lemma}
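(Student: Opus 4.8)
The plan is to run a renormalization scheme on the events $A_{H,w}(v)$. First I would set up a sequence of scales. Let $L_0$ be a large constant and define $L_{k+1} = \ell_k L_k$ for a slowly growing sequence $\ell_k$ (for instance $\ell_k = \lfloor L_k^{\alpha} \rfloor$ with a small $\alpha > 0$, or $\ell_k$ growing like a stretched exponential so that the surviving error term comes out as $\exp(-2\ln^{3/2} H)$ rather than a pure power or a pure exponential). The boxes at scale $k$ are space-time rectangles of spatial width of order $L_k$ and temporal height of order $L_k$; the event we track at scale $k$ is roughly $A_{L_k, w}(v_k)$ where $v_k$ is a speed that we allow to increase \emph{slightly} with $k$, $v_k = (v_+(\rho)\vee 0) + \epsilon(1 - 2^{-k-1})$ or some similar telescoping choice that stays below $(v_+(\rho)\vee 0)+\epsilon$. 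The point of letting the speed drift is that concatenating two sub-box crossings at speed $v_k$ produces a crossing of the larger box at a speed only infinitesimally larger, and the accumulated slack is summable.

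The core combinatorial step is: if $A_{L_{k+1},w}(v_{k+1})$ occurs, then there exist two boxes at scale $k$, \emph{separated in space by at least $D_k \asymp L_k$ while being at time-distance at most $D_k^2$}, such that a bad event of the type $A_{L_k,\cdot}(v_k)$ occurs in each. The Lipschitz property \eqref{e:lipschitz} and the monotonicity Proposition \ref{p:monotone} are what let us localize a fast trajectory into such a pair: a trajectory that is globally fast over $[0,L_{k+1}]$ must be fast on at least a fixed fraction of the scale-$k$ sub-intervals, and among these we can extract two that are laterally well separated because the trajectory has bounded slope and hence cannot stay near a single spatial column. Using the lateral decoupling of Proposition \ref{p:decouple} (the fact that boxes laterally separated by $D$ and temporally by at most $D^2$ are essentially independent, up to a sprinkling in the density), we get
\begin{equation}
  p_{L_{k+1}}(v_{k+1}, \rho) \le C\, \ell_k^{2}\, \big( p_{L_k}(v_k, \rho + \delta_k) \big)^2 + \text{(decoupling error)},
\end{equation}
where $\ell_k^2$ counts the choices of the pair of sub-boxes, $\delta_k$ is the sprinkling at step $k$ chosen so that $\sum_k \delta_k$ is small (say $< (1-\rho)/2$ and also small enough that the shifted density stays bounded away from $0$ and $1$), and the decoupling error decays super-polynomially in $L_k$. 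The recursion closes: if $p_{L_0}(v_0, \rho_0)$ is small enough for the initial density $\rho_0 = \rho + \sum_k\delta_k$ — which is exactly where we use that $v_0 > v_+(\rho_0) \ge v_+(\rho)$, so that by the definition \eqref{e:v_+} of $v_+$ we have $\liminf_H p_H(v_0,\rho_0) = 0$ and hence $p_{L_0}(v_0,\rho_0)$ can be made as small as we like by taking $L_0$ along the appropriate subsequence — then $p_{L_k}(v_k,\rho_k)$ is doubly-exponentially small in $k$, which translates to the bound $\uc{c:deviation}\exp(-2\ln^{3/2}H)$ once the scales $L_k$ are tuned. Finally, a standard interpolation argument (monotonicity of $p_H$ in $H$ up to constants, using \eqref{e:lipschitz} to compare $p_H$ for $H$ between consecutive scales) upgrades the bound along the sequence $(L_k)$ to all $H \ge 1$. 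The statement for $\tilde{p}_H$ and $v_-$ is entirely symmetric — reflecting space, or equivalently reversing the roles, reduces it to the first case; here the truncations $v_+(\rho)\vee 0$ and $v_-(\rho)\wedge 0$ are forced precisely because lateral decoupling only separates boxes in space, so to chain sub-box events into a larger-box event we need the intermediate speed to have a definite sign (one cannot otherwise control how the pieces fit together temporally), which is why the lemma cannot reach below $0$ from above nor above $0$ from below.

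The main obstacle I expect is the geometric extraction of a laterally-separated pair of bad sub-boxes from a single globally-fast trajectory: one has to argue that a trajectory cannot realize $A_{L_{k+1}}(v_{k+1})$ by being fast only within one narrow spatial column — this needs the combination of the Lipschitz bound (so the trajectory does move a definite distance) and a careful choice of how the scale-$(k+1)$ box is partitioned into scale-$k$ boxes so that ``fast over the whole box'' genuinely implies ``fast over two sub-boxes that are far apart.'' The second delicate point is bookkeeping the sprinkling: one must verify that the total density shift $\sum_k \delta_k$ can be absorbed (staying in $(0,1)$ and, for the initial bound, landing on a density still satisfying $v_0 > v_+(\cdot)$), and that the decoupling errors from Proposition \ref{p:decouple} at every scale sum to something dominated by the main $\exp(-2\ln^{3/2}H)$ term — this is what dictates the precise growth rate of the scales $L_k$.
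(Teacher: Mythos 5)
Your renormalization skeleton — increasing scales $L_k$, a slowly drifting sequence of target speeds $v_k$, extracting two laterally separated bad sub-boxes from one globally fast trajectory, seeding the induction at a large scale via the definition \eqref{e:v_+}, and interpolating off the $(L_k)$ sequence at the end — is essentially the paper's route (Lemma \ref{metalemmadet}, Proposition \ref{metalemma}, Lemma \ref{l:deviation_reduction}, plus the interpolation step in Section \ref{s:proof_deviation_interp}). Your explanation of why the truncations $v_+\vee 0$ and $v_-\wedge 0$ appear is also correct: lateral decoupling can only separate boxes in space, so the chaining argument needs the speed to have a definite sign.

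However, you have introduced a sprinkling $\rho \mapsto \rho + \delta_k$ into the recursion, and this is where your plan breaks. The paper's proof of this lemma keeps the density $\rho$ \emph{fixed} throughout; the lateral decoupling Proposition \ref{p:decouple} is a pure covariance bound ${\rm Cov}_\rho(f_1,f_2)\le \uc{c:decouple}e^{-H^{1/4}}$ at a \emph{single} density, with no density shift, and that is precisely why it can be iterated here without any drift in $\rho$. (Sprinkling appears in the paper only later, Proposition \ref{p:sprinkling}, in the proof of ballisticity Proposition \ref{p:ballisticity}, where the two bad sub-boxes may sit directly on top of each other and lateral decoupling genuinely fails.) Your sprinkling creates a real gap at the seeding step: you need $p_{L_0}(v_0,\rho_0)$ small with $\rho_0=\rho+\sum_k\delta_k>\rho$, and you justify this by writing ``$v_0>v_+(\rho_0)\ge v_+(\rho)$'' as if the first inequality followed from $v_0>v_+(\rho)$. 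It does not. All that is known is that $v_+(\cdot)$ is non-decreasing; if $v_+$ jumps at $\rho$ from the right (which, near $\rho_+$, is exactly the kind of behavior the paper cannot rule out), then $v_+(\rho_0)$ can exceed $v_+(\rho)+\epsilon$ for \emph{every} $\rho_0>\rho$, and no choice of $v_0<v_+(\rho)\vee0+\epsilon$ and small $\sum\delta_k$ will satisfy $v_0>v_+(\rho_0)$. Removing the sprinkling entirely — which the pure covariance form of Proposition \ref{p:decouple} permits — closes this gap and recovers the paper's argument: the seed uses only $\liminf_H p_H(v_0,\rho)=0$ at the original $\rho$, which is exactly what the definition of $v_+(\rho)$ guarantees.

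A smaller cosmetic difference: the paper's recursive inequality (Proposition \ref{metalemma}) carries an extra term $p_{hL_k}(v_{\max})$ corresponding to a sub-box being crossed at speed above a ceiling $v_{\max}$. In this application $v_{\max}=2$, so the term is identically zero by the Lipschitz bound \eqref{e:lipschitz}; your two-term recursion is therefore equivalent for the present lemma, though the three-term form is reused later (Proposition \ref{prop:final_delay}) with a nontrivial $v_{\max}$.
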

\begin{remark}
We will present only the proof of the first inequality of \eqref{e:deviation_interp}, involving $v_+$. Nevertheless, a symmetric argument can be used to prove the second inequality.
\end{remark}
The next result, whose proof is exposed in Section \ref{s:deviations} is a simple consequence of Lemma \ref{l:deviation_interp} and will be important in the rest of the paper.

\begin{corollary}\label{l:v_+<=v_-}
{We have that $v_-(\rho)\le v_+(\rho)$, for every $\rho\in(0,1)$.}
\end{corollary}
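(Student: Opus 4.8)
The goal is to show $v_-(\rho) \le v_+(\rho)$ for every $\rho \in (0,1)$, and the plan is to argue by contradiction using Lemma~\ref{l:deviation_interp}. Suppose $v_-(\rho) > v_+(\rho)$ for some fixed $\rho$. The plan is to exploit the fact that, by the Lipschitz bound \eqref{e:lipschitz}, for every starting point $y$ the displacement $X^y_H - \pi_1(y)$ lies in $[-H, H]$; hence for every $H$ the walk must move at \emph{some} average speed, and one of the two deviation events $A_{H,w}(v)$ or $\tilde A_{H,w}(v)$ must occur whenever $v$ is chosen in the supposed gap.

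Concretely, first I would pick a value $v_0$ strictly between $v_+(\rho)$ and $v_-(\rho)$, and then choose $\epsilon > 0$ small enough that $v_0 \ge (v_+(\rho)\vee 0) + \epsilon$ when $v_0 \ge 0$, and symmetrically $v_0 \le (v_-(\rho)\wedge 0) - \epsilon$ when $v_0 \le 0$; note that since $v_+(\rho) < v_0 < v_-(\rho)$, at least one of these two cases is consistent with the positions of $0$. (If $v_0 > 0$ then $v_+(\rho) < v_0$ forces $(v_+(\rho)\vee 0) + \epsilon \le v_0$ for small $\epsilon$; if $v_0 < 0$ then $v_-(\rho) > v_0$ forces $(v_-(\rho)\wedge 0) - \epsilon \ge v_0$ for small $\epsilon$; the case $v_0 = 0$ can be pushed to either side.) In the first case, Lemma~\ref{l:deviation_interp} gives $p_H(v_0,\rho) \le p_H((v_+(\rho)\vee 0)+\epsilon,\rho) \le \uc{c:deviation}\exp(-2\ln^{3/2}H) \to 0$. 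On the other hand, by the definition \eqref{e:v_-} of $v_-$ and the assumption $v_0 < v_-(\rho)$, we have $\liminf_{H\to\infty}\tilde p_H(v_0,\rho) = 0$ as well.

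The contradiction then comes from the deterministic observation that $p_H(v_0,\rho) + \tilde p_H(v_0,\rho) \ge 1$ for every $H$: fixing any $w$ (say $w = (0,0) \in \mathcal{L}_1$) and any admissible $y \in (w + [0,H)\times\{0\}) \cap \mathbb{L}$, the single walk $X^y$ satisfies either $X^y_H - \pi_1(y) \ge v_0 H$ or $X^y_H - \pi_1(y) < v_0 H \le v_0 H$, so $A_{H,w}(v_0) \cup \tilde A_{H,w}(v_0)$ has probability $1$, whence $\mathbb{P}^\rho(A_{H,w}(v_0)) + \mathbb{P}^\rho(\tilde A_{H,w}(v_0)) \ge 1$ and the same holds after taking suprema over $w$. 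Combining with the two limits above along a common subsequence $H_k \to \infty$ realizing $\liminf \tilde p_{H_k}(v_0,\rho) = 0$ yields $1 \le p_{H_k}(v_0,\rho) + \tilde p_{H_k}(v_0,\rho) \to 0$, a contradiction. The symmetric case $v_0 \le 0$ is handled the same way, using the second inequality of \eqref{e:deviation_interp} to kill $\tilde p_H$ and the definition \eqref{e:v_+} of $v_+$ to get $\liminf p_{H_k}(v_0,\rho) = 0$.

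I do not anticipate a serious obstacle here; the only mild subtlety is bookkeeping the position of $0$ relative to $v_+(\rho)$ and $v_-(\rho)$ so that Lemma~\ref{l:deviation_interp} applies with the right sign, and making sure the chosen $v_0$ and $\epsilon$ are compatible with both the lemma and the strict inequalities $v_0 \in (v_+(\rho), v_-(\rho))$. One should also double-check that the event used in the deterministic lower bound $p_H + \tilde p_H \ge 1$ can indeed be realized for a common $w$ and a common admissible $y$ — which is immediate since $A_{H,w}(v)$ and $\tilde A_{H,w}(v)$ are defined by existential quantifiers over the same set of starting points $y$, so a single trajectory suffices.
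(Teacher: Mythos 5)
Your proof is correct and uses the same ingredients as the paper's: the bound $p_H + \tilde p_H \ge 1$, the decay from Lemma~\ref{l:deviation_interp}, and the definitions \eqref{e:v_+}--\eqref{e:v_-} together with the monotonicity of $p_H$ and $\tilde p_H$ in $v$. The only organizational difference is that you pick a nonzero $v_0$ in the supposed gap and branch on its sign, whereas the paper first rules out $v_+(\rho)<0<v_-(\rho)$ and then handles $v_+(\rho)\ge 0$ and $v_-(\rho)\le 0$ separately; this is a cosmetic streamlining, not a different argument.
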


\todo{We need to outline here when we use lateral decoupling and when we use sprinkling. This makes it easier for the reader to understand the generality of the article.}

The next result show that the two quantities $v_+$ and $v_-$ coincide, and thus identifies the candidate for the speed appearing in the LLN.

\begin{theorem}\label{l:v_+=v_-}
We have that, for any $\rho\in(0,1)$,
\begin{equation}
v_+(\rho)=v_-(\rho).
\end{equation}
\end{theorem}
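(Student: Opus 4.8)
\textbf{Proof plan for Theorem~\ref{l:v_+=v_-}.}

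The plan is to argue by contradiction: suppose that $v_-(\rho) < v_+(\rho)$ for some $\rho \in (0,1)$. By Corollary~\ref{l:v_+<=v_-} we only need to rule out strict inequality. Pick a value $v$ strictly between $v_-(\rho)$ and $v_+(\rho)$, and moreover pick $v$ so that it is bounded away from both. The point of this choice is that, on the one hand, by definition of $v_+$ as an infimum (and since $v < v_+(\rho)$), the quantity $p_H(v,\rho)$ does \emph{not} tend to $0$ along any subsequence, so $\liminf_H p_H(v,\rho) > 0$; symmetrically, since $v > v_-(\rho)$, we have $\liminf_H \tilde p_H(v,\rho) > 0$. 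So with probability bounded away from zero (uniformly along a subsequence of scales $H$), there is a trajectory in a box of width $H$ that exits to the right of the $v$-line, and also, with probability bounded away from zero, there is a trajectory in a box of width $H$ that exits to the left of the $v$-line.

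The first real step is to upgrade one of these "probability bounded below" statements into a "probability close to $1$" statement by a renormalization/amplification argument using the lateral decoupling (Proposition~\ref{p:decouple}) and the monotonicity of the coupled walks (Proposition~\ref{p:monotone}). The natural target is the event that \emph{some} walk started in a wide box stays to the \emph{left} of the $v$-line (a lower-deviation event for the whole family), since by coalescence/monotonicity such events compose well across spatially separated sub-boxes: if in each of two sub-boxes, horizontally separated by distance of order $H$ and temporally within $H^2$, there is a walk staying slow, then by Proposition~\ref{p:monotone} the walk started at the left edge of the left sub-box is dominated by the slow walk in the right sub-box, hence is itself slow over the doubled scale. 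Lateral decoupling makes the two sub-box events nearly independent up to a sprinkling in $v$ (equivalently in the density), so the standard renormalization recursion drives the \emph{complementary} probability to $0$ super-polynomially — exactly in the spirit of Lemma~\ref{l:deviation_interp}. The conclusion of this step is: for $v' $ slightly smaller than $v$ (absorbing the sprinkling), with probability tending to $1$ along the subsequence, every walk started in the box stays to the left of the $v'$-line for time $H$.

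But now we have a contradiction with the other half. We still know $\liminf_H p_H(v'',\rho) > 0$ for $v''$ slightly below $v$ (adjust so $v' < v''$): with probability bounded below there is a walk in a box of width $H$ that exits to the \emph{right} of the $v''$-line. Place that box and the "everyone stays left of $v'$" box adjacent in space and compatible in time; by monotonicity the fast walk, started to the right of the slow box, must stay to the right of the dominating slow walk, which forces it to remain left of the $v'$-line — contradicting that it passed to the right of the $v''$-line, since $v' < v''$. Making the geometry precise (which box sits left of which, how the starting points line up, using \eqref{e:monotone2} to handle the time offsets) and tracking the finitely many sprinkling steps so that all the perturbed speeds stay ordered $v' < v'' < v$ is the bookkeeping core of the argument. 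The main obstacle I anticipate is precisely this geometric/sprinkling bookkeeping: one must verify that the decoupling from Proposition~\ref{p:decouple} can be applied with the spatial separation scaling like $H$ while the time scales involved stay below the square of the separation, and that the renormalization recursion closes with the stretched-exponential rate $\exp(-2\ln^{3/2}H)$ rather than degrading it; also one must make sure the amplification is run on the correct side (lower deviations of the whole family) so that monotonicity is used in the favorable direction. Once those are in place, the contradiction is immediate and gives $v_-(\rho) = v_+(\rho)$.
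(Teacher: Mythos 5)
The plan breaks down at what you call "the first real step": amplifying $\liminf_H\tilde p_H(v,\rho)>0$ into "with probability close to $1$, every walk started in the box stays left of a $v'$-line." Two concrete problems. First, the composition you describe does not produce a slow trajectory over a longer time. If box $1$ sits at time $0$ and box $2$ at time $T\le H$, with horizontal separation $D$, then Proposition~\ref{p:monotone} and the slow walk $X^{y_2}$ in box $2$ give $X^z_{T+H}-\pi_1(z)\le\bigl(\pi_1(y_2)-\pi_1(z)\bigr)+vH\approx D+vH$ for the walk $X^z$ started at the left edge of box $1$; but Proposition~\ref{p:decouple} requires $D\gtrsim (T+H)^{3/4}$, and for the bound to beat $v(T+H)$ you would need $D\lesssim vT$, two constraints that do not obviously coexist and that in any case do not control \emph{all} walks in the box, only the leftmost one. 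Second, even granting some composition, the recursion machine of the paper (Proposition~\ref{metalemma}) needs an initial scale at which $p_{hL_k}(v_{\min})$ is small; that is precisely what the definition of $v_+$ as an infimum forbids for any $v_{\min}<v_+$, so the recursion never gets started. Your second step is also backwards as written (being "to the right of the dominating slow walk" provides a \emph{lower} bound, not an upper one), but it is moot: if your first step worked it would already say $p_H(v')\to 0$ along a subsequence with $v'<v_+$, a direct contradiction.

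The paper sidesteps this by not attempting to decouple horizontally at a fixed time. It introduces \emph{traps} (points from which a walk runs below $v_-+\delta$ over one block) and \emph{threatened points}: a point is $(K,r)$-threatened if some translate of it by $jK(v_+,1)$, $j<r$, is a trap (Definition~\ref{d:threatened}). The candidate trap locations lie along a line of slope $v_+$, so the $j$-th and $j'$-th candidates are horizontally separated by $\approx v_+|j-j'|K$ while temporally separated by $|j-j'|K$; this is exactly the geometry where lateral decoupling applies, and it crucially uses $v_+>0$ (the WLOG case in the contradiction). That yields the polynomial decay of Lemma~\ref{l:threatened}, the genuine "amplification to probability near $1$." Then Lemma~\ref{l:delays2} is the mechanism you are missing: a threatened point forces a delay below $v_+-\delta/(2r)$ unless the walk first speeds up above $v_++\delta/(2r)$ on some block, and the speed-up is already controlled by Lemma~\ref{l:deviation_interp}. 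Feeding this into the recursion (Proposition~\ref{prop:final_delay}) gives $p_H(v_+-\epsilon)\to 0$ for a fixed $\epsilon>0$, contradicting the definition of $v_+$. Without the slope trick the decoupling cannot be triggered, and the amplification you propose has no engine.
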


Having Theorem \ref{l:v_+=v_-} in hands we can define
\begin{equation}\label{e:defv}
v(\rho) := v_+(\rho) = v_-(\rho).
\end{equation}
Combining Lemma \ref{l:deviation_interp} with Theorem \ref{l:v_+=v_-} we can derive some immediate conclusions:

\begin{theorem}
\label{t:speed_zero}
Assume that for some $\rho$, we have $v(\rho)=0$.
Then
\[
\frac{X_n}{n} \to 0, \,\,\,\,\,\, \mathbb{P}^{\rho}-\text{a.s.}
\]
\end{theorem}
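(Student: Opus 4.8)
The plan is to deduce the almost-sure convergence $X_n/n \to 0$ directly from the fast-decay estimates in Lemma~\ref{l:deviation_interp} once we know, via Theorem~\ref{l:v_+=v_-}, that $v_+(\rho) = v_-(\rho) = v(\rho) = 0$. Since $v_+(\rho) = 0$ we have $v_+(\rho) \vee 0 = 0$, and since $v_-(\rho) = 0$ we have $v_-(\rho) \wedge 0 = 0$. Hence Lemma~\ref{l:deviation_interp} applied with this $\rho$ gives, for every $\epsilon > 0$, a constant $\uc{c:deviation} = \uc{c:deviation}(\epsilon,\rho)$ with
\begin{equation*}
  p_H(\epsilon, \rho) \le \uc{c:deviation}\exp\big(-2\ln^{3/2} H\big), \qquad
  \tilde p_H(-\epsilon, \rho) \le \uc{c:deviation}\exp\big(-2\ln^{3/2} H\big),
\end{equation*}
for all $H \ge 1$.

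The key point is that these bounds are summable along, say, the integer times $H = n$: since $\exp(-2\ln^{3/2} n) = o(n^{-2})$ as $n \to \infty$, we have $\sum_{n \ge 1} p_n(\epsilon,\rho) < \infty$ and $\sum_{n \ge 1} \tilde p_n(-\epsilon,\rho) < \infty$. Now recall that $(X_n)_{n\in\mathbb{N}}$ has the law of $(X^{(0,0)}_n)_{n\in\mathbb{N}}$, and observe that the event $\{X_n \ge \epsilon n\}$ is contained in $A_{n,(0,0)}(\epsilon)$ (taking $y = (0,0)$ in the definition \eqref{e:A_m_v}), while $\{X_n \le -\epsilon n\}$ is contained in $\tilde A_{n,(0,0)}(-\epsilon)$. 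Therefore $\mathbb{P}^\rho(X_n \ge \epsilon n) \le p_n(\epsilon,\rho)$ and $\mathbb{P}^\rho(X_n \le -\epsilon n) \le \tilde p_n(-\epsilon,\rho)$, and both sequences are summable. By the Borel--Cantelli lemma, $\mathbb{P}^\rho$-almost surely only finitely many of the events $\{|X_n| \ge \epsilon n\}$ occur, i.e.\ $\limsup_{n\to\infty} |X_n|/n \le \epsilon$ almost surely. Taking a sequence $\epsilon = \epsilon_k \downarrow 0$ and intersecting the corresponding full-measure events yields $\limsup_{n\to\infty} |X_n|/n = 0$, that is, $X_n/n \to 0$, $\mathbb{P}^\rho$-almost surely.

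There is no serious obstacle here: the theorem is essentially a bookkeeping corollary of the two previously stated results. The only mild subtlety worth a line in the write-up is to check that the relevant deviation events for the walk $X = X^{(0,0)}$ are genuinely contained in the events $A_{n,(0,0)}(\epsilon)$ and $\tilde A_{n,(0,0)}(-\epsilon)$ that appear in the definitions of $p_H$ and $\tilde p_H$ — this is immediate from \eqref{e:A_m_v} and \eqref{e:tilde_A} by choosing the starting point $y = (0,0)$ — and to note that $\exp(-2\ln^{3/2} H)$ decays faster than any polynomial, which is what makes the Borel--Cantelli argument go through along $H = n \in \mathbb{N}$.
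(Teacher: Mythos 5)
Your proof is correct and follows exactly the route the paper indicates: combine Theorem~\ref{l:v_+=v_-} (to get $v_+ = v_- = 0$), Lemma~\ref{l:deviation_interp} (for the super-polynomial tail bounds at every $\epsilon > 0$), and Borel--Cantelli. The paper states this as a one-line proof; your write-up simply fills in the routine bookkeeping (the containment $\{X_n \ge \epsilon n\} \subset A_{n,(0,0)}(\epsilon)$ and the summability of $\exp(-2\ln^{3/2} n)$), which is exactly what is intended.
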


\begin{proof}[Proof of Theorem \ref{t:speed_zero}]
This is a direct consequence of Lemma  \ref{l:deviation_interp}, Theorem \ref{l:v_+=v_-} and Borel-Cantelli Lemma.
\end{proof}

Furthermore, by  Theorem \ref{l:v_+=v_-} and by definition of $v_+(\rho)$ and $v_-(\rho)$, if the speed exists, then it has to be equal to $v(\rho)$. Hence, $v(\rho)$ is going to be the limiting speed appearing in Theorem \ref{t:speed}. But, as one can see, Lemma  \ref{l:deviation_interp} does not allow us to conclude the existence of the speed, or the CLT, for $v(\rho)>0$ (or for $v(\rho)<0$).
For example, when $v(\rho) > 0$, we know that it is very unlikely that the random walker will overcome the speed $v(\rho)$.
But it is not yet clear whether it can move slower than $v(\rho)$.

In order to prove Theorem~\ref{t:speed}, we will first use sprinkling in order to prove the following ballisticity result, which requires using results from \cite{RWonRW} and \cite{francoiss}. Recall the definition of $\rho_+$ and $\rho_-$ in \eqref{e:defrho+-}.
\begin{proposition}\label{p:ballisticity}
\nc{c:ball+} \nc{c:ball-}
For any $\epsilon>0$, we have that $v(\rho_++\epsilon)>0$ and $v(\rho_--\epsilon)<0$ and, for any $\rho>\rho_++2\epsilon$ and $\tilde{\rho}<\rho_--2\epsilon$ there exists constants $\uc{c:ball+}(\epsilon)$ and $\uc{c:ball-}(\epsilon)$ such that
\begin{equation}
  \label{e:ballisticity}
    \begin{split}
      & \tilde{p}_H (v(\rho_++\epsilon)/2, \rho)\leq \uc{c:ball+}\exp\big(-2\ln^{3/2}H \big)\\
            & p_H(v(\rho_--\epsilon)/2, \tilde{\rho}) \leq \uc{c:ball-}\exp\big(-2\ln^{3/2}H\big),
    \end{split}
  \end{equation}
  for all $H\ge1$.
\end{proposition}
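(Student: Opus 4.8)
The plan is to prove the first line of \eqref{e:ballisticity}; the second follows by the left-right symmetry. First, I establish that $v(\rho_+ + \epsilon) > 0$. By the definition of $\rho_+$ in \eqref{e:defrho+-}, there exists a density $\rho' \in (\rho_+, \rho_+ + \epsilon)$ with $v(\rho') > 0$, and since $\rho \mapsto v(\rho) = v_+(\rho) = v_-(\rho)$ is non-decreasing (as recorded after \eqref{e:v_-} and in \eqref{e:defv}), we get $v(\rho_+ + \epsilon) \geq v(\rho') > 0$. The same monotonicity gives, for $\rho > \rho_+ + 2\epsilon$, that $v(\rho) \geq v(\rho_+ + \epsilon) =: 2u > 0$.

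Now fix $\rho > \rho_+ + 2\epsilon$ and set $\rho_0 := \rho_+ + \epsilon$, so that $\rho > \rho_0 + \epsilon$ and $v(\rho_0) = 2u > 0$. The key point is that $v_-(\rho_0) = v(\rho_0) = 2u$ means $\liminf_{H\to\infty}\tilde p_H(v, \rho_0) = 0$ for every $v < 2u$, in particular for $v = u = v(\rho_0)/2$; that is, along a subsequence $H_j \to \infty$ we have $\tilde p_{H_j}(u, \rho_0) \to 0$. The plan is then to run a renormalization scheme, exactly as in the proof of Lemma \ref{l:deviation_interp}, but now feeding in the decoupling inequality with sprinkling rather than the plain lateral decoupling: the event $\tilde A_{H, w}(u)$ at a large scale forces the occurrence of two copies of a similar bad event in two space-time boxes that are laterally well-separated, and because the boxes can be decoupled at the cost of a small increase in the density, each of the two smaller-scale events may be evaluated under density $\rho_0 + \delta_j$ for a summable sequence of sprinkling parameters $\delta_j$, with $\sum_j \delta_j < \epsilon$ so that all densities encountered stay below $\rho_0 + \epsilon < \rho$. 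Since $\tilde p_H(\cdot, \cdot)$ is monotone non-increasing in the density (more particles only push the walk to the right, by Proposition \ref{p:monotone} combined with the coupling of the initial configurations described in Section \ref{ss:envi}), the bound obtained at density $\rho_0 + \epsilon$ is also a bound at density $\rho$. The recursion, started from the smallness of $\tilde p_{H_j}(u, \rho_0)$ along the subsequence, closes to give $\tilde p_H(u, \rho) \leq \uc{c:ball+}\exp(-2\ln^{3/2} H)$ for all $H \geq 1$, which is precisely the claimed estimate with $u = v(\rho_+ + \epsilon)/2$.

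I expect the main obstacle to be bookkeeping the sprinkling: one must choose the scale sequence $L_j$ and the sprinkling sequence $\delta_j$ so that (i) $\sum_j \delta_j$ is smaller than the gap $\epsilon$ between $\rho_0$ and $\rho$, (ii) the accumulated decoupling errors, together with the density increments, still allow the renormalization inequality $p_{j+1} \lesssim p_j^2 + (\text{decoupling error})$ to be iterated to a stretched-exponential bound, and (iii) the initial step is seeded correctly — here one must be slightly careful because the hypothesis $v_-(\rho_0) = 2u$ only gives smallness of $\tilde p_H(u,\rho_0)$ along a subsequence, not for all $H$, so the induction must be started at a suitably chosen large scale $H_{j_0}$ and the conclusion for all $H \geq 1$ obtained afterwards by adjusting the constant $\uc{c:ball+}$. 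A secondary point requiring care is that $\tilde A_{H,w}(v)$ involves a supremum over starting points $y$ in an interval of length $H$, so when decomposing a large box into smaller ones one must sum over $O(1)$ many sub-intervals and use a union bound, which only costs polynomial factors that are absorbed by the stretched-exponential rate; this is the same mechanism already used in Lemma \ref{l:deviation_interp} and should transfer with only notational changes.
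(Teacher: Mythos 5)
Your proposal follows the paper's strategy closely: introduce a sprinkling decoupling (the paper's Proposition~\ref{p:sprinkling}), run a one-scale renormalization over an increasing density sequence $\rho_k$ with $\sum_k (\rho_{k+1}-\rho_k) < \epsilon/2$ and a slowly drifting velocity sequence, seed the recursion from the subsequence guaranteed by $v_-(\rho_++\epsilon)>0$, and finish with the interpolation argument of Section~\ref{s:proof_deviation_interp}. Two points in your write-up are, however, backwards or underspecified, and one of them is the conceptual crux of this proposition.

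First, you write that the slowdown event at scale $k+1$ forces two bad sub-boxes that are \emph{laterally well-separated}. That is precisely what fails here, and why Proposition~\ref{p:decouple} cannot be used. In the deterministic Lemma~\ref{l:determlemmasprink}, a walker that ends its run of length $hL_{k+1}$ too far to the left must have been slow over at least $\uc{c:boundrhok}+1$ of the $\ell_k$ time-slabs of height $hL_k$, and by pigeonhole two of those slabs are at \emph{time} distance $\geq \uc{c:boundrhok} hL_k$, while their spatial locations may coincide. Sprinkling (Proposition~\ref{p:sprinkling}) is introduced precisely because the two bad boxes can sit on top of each other in space; lateral decoupling handles the other situation (for $A_{H,w}(v)$ with $v>0$, the bad sub-boxes are automatically far apart in space). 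If the sub-boxes were laterally separated, as you state, you would not need sprinkling at all and could reuse Lemma~\ref{l:deviation_interp} verbatim, which is false.

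Second, you seed the velocity at $u = v(\rho_++\epsilon)/2$ itself. The recursion loses speed at each step: in the paper, $\tilde v_{k+1} = \tilde v_k - 2\uc{c:boundrhok}/\ell_k$ (the Lipschitz bound only controls the slow sub-steps up to $-1$, so each of the forced-slow slabs costs an $O(1/\ell_k)$ correction). If you start at $u$, the limiting velocity sits strictly below $u$, and since $\tilde A_{H,w}(\cdot)$ is \emph{increasing} in the velocity argument, a bound on $\tilde p_H(u',\rho)$ for $u'<u$ does not give a bound on $\tilde p_H(u,\rho)$. The paper seeds the velocity at $\tfrac{7}{8} v(\rho_++\epsilon)$ so that the limit stays $\geq \tfrac{5}{8} v(\rho_++\epsilon) > u$, and then passes to $u$ by the same monotonicity. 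Apart from these two corrections, the steps you list (seeding along a subsequence, keeping the accumulated density increase below the gap to $\rho$, absorbing polynomially many union-bound terms into the stretched-exponential rate, and adjusting the constant $\uc{c:ball+}$ for small $H$) are exactly what the paper does.
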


Proposition \ref{p:ballisticity} is weaker than what one should expect, but it is actually enough to conclude the LLN and CLT of Theorem \ref{t:speed} by using results from \cite{francoiss} and \cite{RWonRW} who construct a renewal structure respectively the random walk on SSEP and PCRW.
%Nevertheless, we want to prove the continuity of $v(\cdot)$ on $(0,1)\setminus\{\rho_-,\rho_+\}$. This will require a slight modification of the renewal structure from $\clubsuit$ and $\clubsuit$, see Section $\clubsuit$.

\todo{add structure of the next sections.}

\section{Lateral decoupling}

In this section, we provide a very important property of the annealed law of the walk: if one observe the family of continuous space-time random walk defined in Section \ref{s:coupled_rw} in two disjoint 2-dimensional boxes at a space distance that is large compared to the square root of the time distance, then events in these two boxes are essentially independent.
Let us state this fact precisely.

As in Figure~\ref{f:lateral_decoupling}, fix $y_1, y_2\in\mathbb{R}^2$ such that $\pi_1(y_1)\le\pi_1(y_2)$, $\pi_2(y_1)=\pi_2(y_2)$, for $H\ge1$, let $B_1=y_1+[-H,0]\times[0,H]$ and $B_2=y_2+[0,H]\times[0,H]$ we define the distance $d(B^H_{y_1},B^H_{y_2}):=|\pi_1(y_1)-\pi_1(y_2)|$.
Our objective in the next proposition is to bound the dependence of what happens inside $B_1$ and $B_2$.

We say that a function $f :\mathcal{D}(\mathbb{R}_+, S^{\mathbb{Z}})$ is supported on a box $B_y^H$ if it is measurable with respect to $\sigma\big(\{\eta^\rho_t(x) \colon (x,t) \in B_y^H \cap (\mathbb{Z}\times \mathbb{R}_+)\}\big)$.

\begin{proposition}\label{p:decouple}
   \nc{c:decouple}
Consider the environment with law $\mathbf{P}^\rho_{EP}$ or $\mathbf{P}^\rho_{RW},$ with some density $\rho\in(0,1)$. Let $H\ge1$ and  $y_1, y_2\in\mathbb{R}^2$ such that $\pi_2(y_1)=\pi_2(y_2)$, $\pi_1(y_1)\le\pi_1(y_2)$  and such that
\begin{equation}
  \pi_1(y_1)-\pi_1(y_2)\ge H^{\frac{3}{4}}.
\end{equation}
Let $B_1=y_1+[-H,0]\times[0,H]$ and $B_2=y_2+[0,H]\times[0,H]$. For any non-negative functions $f_1$ and $f_2$, with $\|f_1\|_\infty,\| f_2\|_\infty\le1$, supported respectively on $B_1$ and $B_2$,  we have that
   \begin{equation}
     \label{e:decouple}
     {\rm Cov}_\rho(f_1, f_2) \leq \uc{c:decouple} e^{-H^{\frac{1}{4}}},
   \end{equation}
for some constant $\uc{c:decouple}=\uc{c:decouple}(\rho)$.
\end{proposition}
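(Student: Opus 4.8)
The plan is to prove the covariance bound separately for the two environments, in each case leveraging the diffusive nature of the particles to argue that, with overwhelming probability, the portion of the environment governing $B_1$ is determined by initial data and Poisson/random-walk clocks that are disjoint from those governing $B_2$. The key observation is the following space-time cone argument. Fix a mesoscopic scale, say $\ell := H^{3/4}/3$ (the precise constant is irrelevant). Since the boxes are at horizontal distance at least $H^{3/4}$ and have time extent $H$, it suffices to show that the configuration $(\eta^\rho_t(x) : (x,t)\in B_i \cap(\mathbb{Z}\times\mathbb{R}_+))$ can be reconstructed, up to an event of probability $\le \uc{c:decouple} e^{-H^{1/4}}$, from randomness that is independent across $i=1,2$.

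\textbf{The SSEP case.} Here I would use the graphical construction via edge Poisson clocks. Let $I_i$ denote the spatial interval that is the projection of $B_i$ onto $\mathbb{Z}$, thickened by $\ell$ on each side; by the distance hypothesis $I_1$ and $I_2$ are disjoint. Let $G$ be the event that, during the time interval $[\pi_2(y_1), \pi_2(y_1)+H]$, no edge Poisson clock rings on any of the $\lceil \ell\rceil$ edges separating $I_1$ from its exterior on the right, and similarly for $I_2$ on the left. On $G$, the exchange dynamics inside $B_1$ (resp. $B_2$) depend only on $(\eta^\rho_0(x): x\in I_1)$ together with the edge clocks strictly inside $I_1$ — and these are independent of the analogous data for $I_2$. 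Hence on $G$ one may write $f_i$ as (a version of) a function $\tilde f_i$ of these $i$-local data, and $\mathrm{Cov}(\tilde f_1\mathbf 1_G,\tilde f_2\mathbf 1_G)$ would vanish were it not for the common factor $\mathbf 1_G$; splitting $G = G_1\cap G_2$ with $G_i$ local to side $i$, one gets $\mathbb{E}[\tilde f_1\tilde f_2\mathbf 1_{G}] = \mathbb{E}[\tilde f_1\mathbf 1_{G_1}]\mathbb{E}[\tilde f_2\mathbf 1_{G_2}]$, and then
\begin{equation}
  \mathrm{Cov}_\rho(f_1,f_2) \le 2\,\mathbf{P}^\rho_{EP}(G^c) \le 4\lceil\ell\rceil\big(1-e^{-\gamma H}\big)?
\end{equation}
— wait, this naive ``no clock rings'' event has probability that is \emph{exponentially small in $H$ the wrong way}. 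This is the main obstacle (see below), and the fix is to allow clocks to ring but to control how far mass can travel: replace $G$ by the event that no particle initially outside $I_i$ enters the core of $B_i$ and no particle initially inside exits — equivalently, that the exclusion-process ``information front'' does not cross a gap of width $\ell$ in time $H$. Since the SSEP is attractive and can be coupled with independent random walks for the purpose of bounding the displacement of a labeled front (or one uses the known fact that second-class/front particles in SSEP move diffusively, cf. the lateral decoupling estimates in \cite{BHT18, francoiss}), the probability that any such influence propagates a distance $\ell = H^{3/4}/3$ in time $H$ is at most $C e^{-c \ell^2/H} = Ce^{-c' H^{1/2}} \le \uc{c:decouple} e^{-H^{1/4}}$ for $H$ large, and the bound for small $H$ is absorbed into the constant.

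\textbf{The PCRW case.} This is conceptually the same but cleaner. Decompose the Poisson field of initial particles as $\mathcal P = \mathcal P_1 + \mathcal P_2 + \mathcal P_0$ where $\mathcal P_i$ consists of particles started in $I_i$ and $\mathcal P_0$ the rest; these are independent. A particle contributes to $\eta$ inside $B_i$ only if its lazy random walk, run for time $\le H$, reaches $I_i$'s core. By a standard Gaussian/Hoeffding tail bound for lazy simple random walk, a particle started at distance $\ge \ell$ from the core of $B_i$ reaches it within time $H$ with probability $\le e^{-c\ell^2/H}$; summing the Poisson intensity over all such starting sites and using that the expected number is finite per site, the probability that \emph{any} particle from $\mathcal P_0\cup\mathcal P_{3-i}$ influences $B_i$ is at most $C H e^{-c\ell^2/H} \le \uc{c:decouple} e^{-H^{1/4}}$. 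On the complementary (local) event, $f_i$ is a function of $\mathcal P_i$ and its attached walk increments only, and independence across $i$ gives the covariance bound exactly as above, with the error term coming from the local ``no long-range influence'' events which, being themselves $i$-local, cause no coupling.

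\textbf{Main obstacle.} The crux is quantitative: one must not demand that the dynamics literally decouple (that would cost $e^{+cH}$), but only that the \emph{range of dependence} stays below the gap $H^{3/4}$, which by diffusivity costs $e^{-c(H^{3/4})^2/H} = e^{-cH^{1/2}}$, comfortably beating the target $e^{-H^{1/4}}$. Making this rigorous for the SSEP — where particles are not independent — is the only genuinely delicate point; I would handle it either by the graphical coupling of the exclusion process with independent walks that dominates the displacement of a distinguished front, or simply by invoking the lateral mixing estimate already available for these environments in \cite{francoiss} and \cite{BHT18}, which is precisely an inequality of this form. The remaining steps (splitting $G$ into independent local events, the two-line covariance estimate $\mathrm{Cov}(f_1,f_2)\le 2\mathbb{P}(\text{non-local event})$ using $\|f_i\|_\infty\le1$) are routine.
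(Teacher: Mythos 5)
Your proposal follows essentially the same route as the paper: bound the covariance by the probability of a "long-range influence" event, then estimate that event via a diffusive-displacement bound that gives $e^{-cH^{1/2}}$, comfortably beating the target $e^{-H^{1/4}}$. Your PCRW argument matches the paper's almost exactly — decomposition of the Poisson field by starting region, Gaussian tail for lazy walks, sum over sites — and the two-line covariance machinery $\mathrm{Cov}(f_1,f_2)\le C\,\mathbb{P}(A^c)$ using $0\le f_i\le 1$ is the paper's as well.

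For the SSEP you correctly diagnose the obstacle (particles are not independent; the naive "no clock rings" event has the wrong exponential behavior) and correctly identify the fix (control the speed of an information front rather than freeze the dynamics). What you leave as a gesture — "graphical coupling that dominates a distinguished front, or invoke existing lateral mixing estimates" — is the one genuinely delicate step, and the paper does supply a concrete construction: it introduces an auxiliary "green" (undetermined) state for sites far from each box, runs two independent copies driven by Poisson clocks split at the midline, and defines a stopping time $S$ as the first moment a green site reaches either box or a determined site crosses the split. Because the boundary of the green region moves, under the Poisson-clock graphical construction, exactly as a rate-$\gamma$ continuous-time simple random walk (not just "diffusively" via second-class-particle heuristics, which in fact behave differently), the bound $\widetilde{\mathbf P}(S\le H)\le C\sum_{x\ge H^{3/4}/4} e^{-x^2/(2\gamma H)}$ is an elementary tail estimate, and on $\{S>H\}$ the two boxes see independent environments. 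So your overall strategy is right and your PCRW case is complete; the SSEP half would need this explicit coupling (or an equally concrete alternative) to go from a correct sketch to a proof.
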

\begin{figure}[h]   \center
  \includegraphics{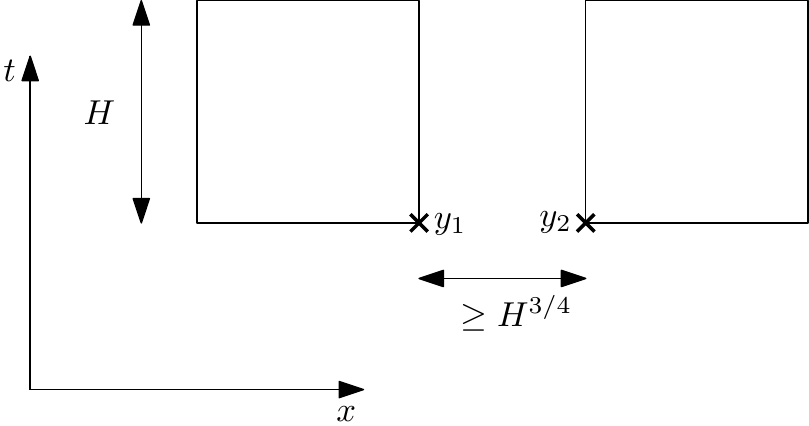}
  \caption{Lateral decoupling}
  \label{f:lateral_decoupling}
\end{figure}
\begin{proof}
The idea of the proof is that there exists an event $A$ such that $A^c$ has very small probability and $\mathbb{E}[f_1f_2\mathbf{1}_A]\le \mathbb{E}[f_1] \mathbb{E}[f_2]+2\mathbb{P}(A^c)$. In this case, as $f_1,f_2\ge0$ and   $\|f_1\|_\infty,\| f_2\|_\infty\le1$, one has that ${\rm Cov}_\rho(f_1,f_2)\le 3\mathbb{P}(A^c)$. Roughly speaking, the event $A$ will be the event that some particle visits both boxes $B_1$ and $B_2$. The proof is slightly different for SSEP and for PCRW, thus we separate the proof in two cases.\\

\noindent {\bf Case I:} The PCRW.\\
As the particules move independently for PCRW, it is clear that on the event that no particule started at time $\pi_2(y_1)$ on the right of $\pi_1(y_1)+H^{3/4}/4$ enters $B_1$ and no particule started at time $\pi_2(y_2)=\pi_2(y_1)$ on the left of $\pi_1(y_2)-H^{3/4}/4$ enters $B_2$, then the variables $f_1$ and $f_2$ behave independently, as can be shown by a simple coupling argument. Let us briefly outline a possible coupling, denoting $A$ the event described above. Let $\tilde{f}_1$ (resp.~$\tilde{f}_2$) be defined as $f_1$ (resp.~$f_2$) on an environment matching $\eta^\rho$ except that particles on the right of  $\pi_1(y_1)+H^{3/4}/4$ (resp.~left of $\pi_1(y_2)-H^{3/4}/4$) at time $\pi_2(y_1)$ are erased. The functions $\tilde{f}_1$ and $\tilde{f}_2$ are clearly independent and are respectively equal to $f_1$ and $f_2$ on the event $A$. Hence, using that $\|f_1\|_\infty,\| f_2\|_\infty\le1$ and $f_1,f_2\ge0$, on has that 
\[
\mathbb{E}[f_1f_2\mathbf{1}_A]=\mathbb{E}[\tilde{f}_1\tilde{f}_2\mathbf{1}_A]\le \mathbb{E}[\tilde{f_1}] \mathbb{E}[\tilde{f_2}]\le  \mathbb{E}[f_1] \mathbb{E}[f_2]+2 \mathbb{P}(A^c).
\]
Moreover, under $\mathbf{P}^\rho_{RW}$, $\eta^\rho_0(x)=k$ with probability $e^{-\lambda}\lambda^k/(k!)$, where we recall that $\lambda=-\ln(1-\rho)$. Hence (by Azuma's inequality for instance), we obtain, for $H\ge1$,
\begin{equation}
\begin{split}
&  {\rm Cov}_\rho  (f_1, f_2)\\
&\le 2\mathbf{P}^\rho_{RW}\left[ \exists x\in\mathbb{Z}: x> \frac{H^{3/4}}{4},1\le i \le \eta^\rho_0(x), \inf_{t\le H} \left(Y^{x,i}_t-x\right)\le -x \right]\\
&\le2 \sum_{x=\lceil H^{3/4}/4\rceil}^\infty \sum_{k\ge0} k\exp\left(-\frac{x^2}{2H}\right)e^{-\lambda}\frac{\lambda^k}{k!}\le2 \lambda e^{-\frac{H^{1/2}}{32}}\sum_{x\ge0} e^{-\frac{x^2}{2H}}\\
&\le2\lambda (1+\sqrt{2\pi H}) e^{-\frac{H^{1/2}}{32}}.
\end{split}
\end{equation}
This proves the result, choosing $\uc{c:decouple}$ properly.\\

\noindent {\bf Case II:} The SSEP.\\
In this case, the decoupling is slightly more complicated to justify, but one can do so by a coupling argument that we outline here. Consider the following construction of the environment from time $\pi_2(y_1)=\pi_2(y_2)$.
We will use two independent environment that we will couple with the actual environment until a relevant stopping time.\\
Recall the construction from Section \ref{const_ssep}. For simplicity, let us assume $\pi_2(y_1)=\pi_2(y_2)=0$ and $\pi_1(y_1)=0$.\\
 Let $\eta^{(\ell)}$ (resp. $\eta^{(r)}$) be the following environment: for $x< H^{3/4}/4$ (resp. $x> \pi_1(y_2)-H^{3/4}/4$), let $\eta^{(\ell)}_0(x)$ (resp. $\eta^{(r)}_0(x)$) be i.i.d.~Bernoulli random variables with mean $\rho$. For $x\ge H^{3/4}/4$ (resp. $x\le \pi_1(y_2)-H^{3/4}/4$), let $\eta^{(\ell)}_0(x)=g$ (resp. $\eta^{(r)}_0(x)=g$), where $g$ represents an undetermined state ( if 0 means a white particle and 1 a black particle, then $g$ could mean a green particle).\\
 Now, for every $x<H^{3/4}/2$ (resp. $x\ge H^{3/4}/2$), let $(T^{\ell,x}_i)$ (resp. $(T^{r,x}_i)$) be independent Poisson point processes with rate $\gamma$. As in Section \ref{const_ssep}, at times $(T^{\ell,x}_i)$ or $(T^{r,x}_i)$, the occupation of site $x$ and $x+1$ are exchanged. Let us denote ${\bf P}^{(\ell)}$ and ${\bf P}^{(r)}$  the laws of $\eta^{(\ell)}$ and $\eta^{(r)}$. Besides, we define $\eta^{(\ell)}$ and $\eta^{(r)}$ on a common probability space with measure $\widetilde{\bf P}$, under which we let them to be independent.\\
 Let us now define two stopping times. Let $S_1$ be the first time a green particule of $\eta^{(\ell)}$ enters $B_1$ or a particle started on the left of $H^{3/4}/4$ goes on the right of $H^{3/4}/2$, that is,
 \begin{equation}
 S_1=\inf\left\{t>0: \exists x\le 0 \text{ s.t. }  \eta^{(\ell)}_t(x)=g, \text{ or }\exists x\ge H^{3/4}/2 \text{ s.t. }  \eta^{(\ell)}_t(x)\in\{0,1\}\right\}.
 \end{equation}
 Similarly, define
 \begin{equation}
 S_2=\inf\left\{t>0: \exists x\ge \pi_1(y_2) \text{ s.t. }  \eta^{(\ell)}_t(x)=g, \text{ or }\exists x\le H^{3/4}/2 \text{ s.t. }  \eta^{(\ell)}_t(x)\in\{0,1\}\right\}.
 \end{equation}
 Also, define $S=S_1\wedge S_2$.
Let us now define $\eta^\rho$ under $\widetilde{\bf P}$, so that its marginal on $B_1\cup B_2$ corresponds to its law under ${\bf P}^\rho$. For any $t<S$, we let, for any $x\le 0$, $\eta^\rho(x)=\eta^{(\ell)}(x)$ and, for $x\ge\pi_1(y_2)$, we let $\eta^\rho(x)=\eta^{(r)}(x)$. At time $S$, each green particle takes the value of independent Bernoulli random variable with parameter $\rho$ and, after this time, the process continues as a usual SSEP as described in Section \ref{const_ssep}.\\
It is not difficult to see that, under $\widetilde{\bf P}$, the environment $\eta^{\rho}$ on $B_1\cup B_2$ has the same law as $\eta^{\rho}$ under ${\bf P}^\rho$, on $B_1\cup B_2$. Hence, $f_1$ and $f_2$ have the same law under $\widetilde{\bf P}$ and ${\bf P}^\rho$. Finally, one should note that, on the event $\{S>H\}$, $f_1$ only depends on $\eta^{(\ell)}$ and $f_2$ only depends on $\eta^{(r)}$, and thus they are conditionally independent.\\

Using the previous argument, the definition of $S$ and denoting $Y$ a continuous simple random walk with rate $\gamma$, we have that,
since $\|f_1\|_\infty \le 1$ and $\| f_2\|_\infty\le1$, 
\begin{equation}
  \begin{split}
 {\rm Cov}_\rho  (f_1, f_2)& \leq 3\widetilde{P}\left(S\le H\right) \le 12\sum_{x\ge \lceil H^{3/4}/4\rceil}\mathbf{P}\left[ \sup_{t\le H} Y_t\ge x \right]\\
& \le 12\sum_{x\ge \lceil H^{3/4}/4\rceil}\exp\left(-\frac{x^2}{2\gamma H}\right)\le 12(1+\sqrt{2\pi H})\exp\left(-\frac{H^{1/2}}{32\gamma}\right),
  \end{split}
\end{equation}
where we used similar estimated as above. This proves the result, choosing $\uc{c:decouple}$ properly.
%Under $\mathbf{P}^\rho_{EP}$, $\eta^\rho_0(x)\in\{0,1\}$, hence by the Reflection Principle and a large deviations estimate, we have that for $H \ge 512^2$,
%\todo{I don't get the $512$ here nor below...}
%\begin{equation}
%  \begin{split}
%    &\mathbf{P}^\rho_{EP}\left[ \exists x\in\mathbb{Z}: x> H+\frac{H^{3/4}}{4},1\le i \le \eta^\rho_0(x), \inf_{t\le H} \left(Y^{x,i}_t\right)\le x-\frac{H^{3/4}}{4} \right]\\
%    &\le \sum_{x=\lceil H^{3/4}/4\rceil}^\infty \exp\left(-\frac{x^2}{2H}\right)\le e^{-2H^{1/4}}.
%  \end{split}
%\end{equation}
\end{proof}

\section{Upper and lower deviations of the speed}\label{s:deviations}

This section is devoted to the proof of Lemma  \ref{l:deviation_interp} and Corollary \ref{l:v_+<=v_-}. We will prove Lemma  \ref{l:deviation_interp} for $v_+$ only but exactly the same proof, with symmetric arguments, holds for $v_-$.

\subsection{Proof of Corollaries}

We start by showing how Lemma  \ref{l:deviation_interp} implies Corollary \ref{l:v_+<=v_-}.

%\added{\begin{remark} \label{remv<1}
%The fact that the walk is nearest-neighbor yields $v_+(\rho)\le 1$.
%Also, it implies the first inequality in \eqref{e:deviation_interp} in case $v_+(\rho)= 1$.
%Similarly, the second inequality \eqref{e:deviation_interp} holds for the case $v_-(\rho)=-1$.
%Therefore, in the proofs, we can always assume
%\begin{equation}
%\label{e:bound_speeds}
%v_+(\rho)<1 \,\,\,\,\, \text{ and } \,\,\,\,\,\, v_-(\rho)>-1.
%\end{equation}
%\end{remark}
%}

\begin{proof}[Proof of Corollary \ref{l:v_+<=v_-}]
First note that, by the definition of $v_+(\rho)$ and $v_-(\rho)$, we have that, for any $\epsilon>0$,
\begin{display}
there exist two increasing sequences $(H_i^+)_i$ and $(H_i^-)_i$ such that $p_{H_i^+}(v_+(\rho)+\epsilon,\rho)<1/2$ and $\tilde{p}_{H_i^-}(v_-(\rho)-\epsilon,\rho)<1/2$.
\end{display}
Note also that, for any $v_1,v_2\in\mathbb{R}$ such that $v_1<v_2$ and any $H>0$,
\begin{display}
\label{e:pH+pH}
$p_{H}(v_1,\rho)+\tilde{p}_{H}(v_2,\rho)\ge1$.
\end{display}

We start by showing that either $v_+(\rho)\ge0$ or $v_-(\rho) \le 0$.
Indeed, assume that $v_+(\rho)<0$ and $v_-(\rho)>0$ and fix any $\epsilon\in(0,v_-(\rho)/4)$.
Lemma \ref{l:deviation_interp} implies that, for $H \in \mathbb{N}$ large enough,
\begin{equation}
  \label{e:less_than_half}
  p_{H}(\epsilon,\rho) < 1/2.
\end{equation}
Since $\epsilon<v_-(\rho)-\epsilon$, we obtain from \eqref{e:pH+pH} and \eqref{e:less_than_half} that $p_{H_i^-}(\epsilon, \rho) + \tilde{p}_{H_i^-}(v_-(\rho) - \epsilon, \rho) < 1$, as soon as $i$ is large enough.
This contradicts \eqref{e:pH+pH}.

Now consider the case $v_+(\rho)\ge0$.
Assume, by contradiction, $v_-(\rho)>v_+(\rho)$.
Fix $\epsilon\in(0,(v_-(\rho)-v_+(\rho))/4)$ so that $v_+(\rho)+\epsilon<v_-(\rho)-\epsilon$.
By  Lemma  \ref{l:deviation_interp}, for any $H\in\mathbb{N}$ large enough, $p_{H}(v_+(\rho)+\epsilon,\rho)<1/2$.
Thus, as soon as $i$ is large enough, we obtain from \eqref{e:pH+pH} that $p_{H_i^-}(v_+(\rho) + \epsilon,\rho) + \tilde{p}_{H_i^-}(v_-(\rho) - \epsilon,\rho) < 1$, which contradicts \eqref{e:pH+pH} once more.
Thus, $v_+(\rho)\ge0$ implies $v_-(\rho)\le v_+(\rho)$.

By a symmetric argument, $v_-(\rho)\le0$ implies $v_-(\rho)\le v_+(\rho)$.
This completes the proof that $v_-(\rho)\le v_+(\rho)$.
\end{proof}

Now, we prove that Theorem \ref{l:v_+=v_-} and Theorem \ref{t:speed_zero} imply Theorem~\ref{l:exclusion_symmetric}, stating that the random walk on the Exclusion process with density $1/2$ has zero speed when $p_\circ=1-p_\bullet$.

\begin{proof}[Proof of Theorem~\ref{l:exclusion_symmetric}]

Note that the exclusion process with $\rho=1/2$ is invariant in law under flipping colors $\bullet \leftrightarrow \circ$.
Thus, for any $p, q \in [0,1]$ we have $\mathbb{P}^{1/2}_{p, q} = \mathbb{P}^{1/2}_{q, p}$ which implies
\begin{equation}
\label{e:v_+flip}
v_+(1/2,p,q) = v_+(1/2,q, p).
\end{equation}

Furthermore, in the particular case $q = 1-p$ we have, for any $\rho \geq 0$, any $y\in \mathbb{L}$ and any Borel set $A \in \mathbb{R}$,
\begin{equation}
\mathbb{P}^{\rho}_{p, q} \big[ X^y_H - \pi_1(y) \in A\big] = \mathbb{P}^{\rho}_{q, p} \big[- \big(X^y_H - \pi_1(y)\big) \in A \big].
\end{equation}
In particular,
\begin{equation}
\mathbb{P}^{\rho}_{p, q} [ A_{H,w}(-v)] = \mathbb{P}^{\rho}_{q, p} [\tilde{A}_{H,w}(v)].
\end{equation}
Therefore, still assuming that $q = 1-p$ we get
\begin{equation}
\label{e:v_+=-v_-}
\begin{split}
v_+(\rho, p, q ) &= \inf\{ v\in \mathbb{R}\colon \liminf_{H\to \infty} \sup_{w\in [0,1)\times \{0\}} \mathbb{P}^{\rho}_{p, q} (A_{H,w}(v)) =0\}\\
&= -\sup\{ v\in \mathbb{R}\colon \liminf_{H\to \infty} \sup_{w\in [0,1)\times \{0\}} \mathbb{P}^{\rho}_{p, q} (A_{H,w}(-v)) =0\}\\
&= -\sup\{ v\in \mathbb{R}\colon \liminf_{H\to \infty} \sup_{w\in [0,1)\times \{0\}} \mathbb{P}^{\rho}_{q, p} (\tilde{A}_{H,w}(v)) =0\} \\
&= -v_-(\rho, q, p).
\end{split}
\end{equation}

Combining \eqref{e:v_+=-v_-} and \eqref{e:v_+flip} we get that whenever $p_\circ = 1-p_\bullet$,
\begin{equation}
v_+(1/2,p_\bullet, p_\circ) = - v_- (1/2, p_\bullet, p_\circ),
\end{equation}
and thus, by Theorem \ref{l:v_+=v_-}, we have that $v_+(1/2,p_\bullet, p_\circ)=v_-(1/2,p_\bullet, p_\circ)=0$.
We can then conclude using Theorem \ref{t:speed_zero}.
\end{proof}

\subsection{Scales and boxes}\label{s:scales}

In this section, we define some scales and boxes on $\mathbb{R}^2$ that we will use in several renormalization schemes throughout the paper.\\
Define recursively
\begin{equation}
  \label{e:L_k}
  L_0 := 10^{10} \quad \text{and} \quad L_{k + 1} := l_k L_k \text{ for $k \geq 0$, where $l_k := \lfloor L_k^{1/4} \rfloor$}.
\end{equation}

\nc{c:scale_round}
There exists $\uc{c:scale_round} > 0$ such that
\begin{equation}
  \label{e:scale_round}
  \uc{c:scale_round} L_k^{5/4} \leq L_{k + 1} \leq L_k^{5/4}, \text{ for every $k \geq 0$}.
\end{equation}
For  $L \geq 1$ and $h \geq 1$, define
\begin{equation}
  \label{e:B_L_h}
  B^h_L := [- h L, 2 h L) \times [0, h L) \subseteq \mathbb{R}^2,
\end{equation}
\begin{equation}
  \label{e:I_L_h}
  I^h_L := [0, h L) \times \{0\} \subseteq \mathbb{R}^2,
\end{equation}
and, for $w \in \mathbb{R}^2$,
\begin{equation}\label{e:B_L_h_w}
    B^h_L(w) := w + B^h_L \,\, \text{ and }\,\,
    I^h_L(w) := w + I^h_L.
\end{equation}
\begin{figure}[h]   \center
\includegraphics{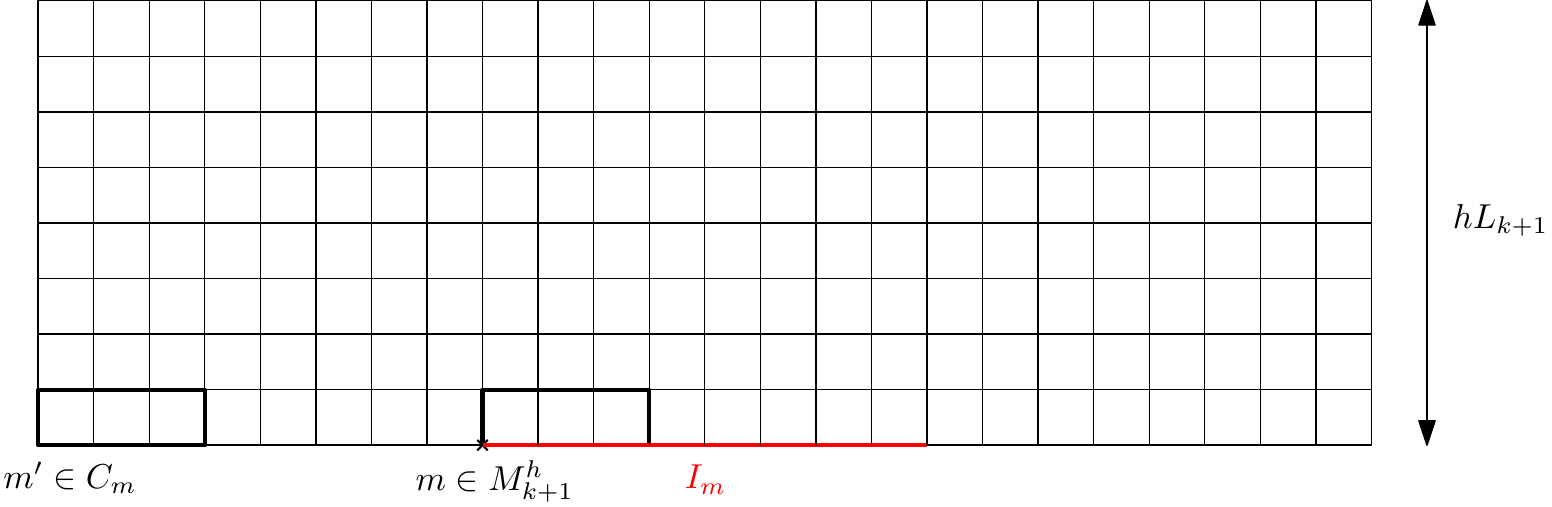}
   \caption{A box $B_m$ with $m\in M^h_{k+1}$, paved using $C_m$}
\end{figure}
It will be convenient to define the following set of indices:
\begin{equation}
  \label{e:M_h_k}
  M^h_k := \{h\} \times \{k\} \times \mathbb{R}^2,
\end{equation}
such that $m\in M^h_k$ is of the form $m=(h,k,w)$, and where we moreover define, for $v\in\mathbb{R}$,
\begin{equation}
B_m := B^h_{L_k}(w)\,,\quad I_m := I^h_{L_k}(w)\quad \text{and}\quad A_m(v):=A_{hL_k,w}(v).
\end{equation}
Note that, for each $m=(h,k,w) \in M_k^h$, a random walk starting at $I_m$ stays inside $B_m$, using \eqref{e:lipschitz}.
We also define the horizontal distance between $m = (h,k,(x,t))$ and $m' = (h,k,(x',t'))$ in $M_k^h$ as
\begin{equation}
  \label{e:d_s}
  d_s(m, m') = |x-x'|.
\end{equation}
Later on, for $m\in M^h_{k+1}$, we will want to tile the box $B_m$ with boxes $B_{m'}$ with $m'\in M^h_k$. For this purpose, we define, for $m\in M^h_{k+1}$ with $m=(h,k+1,(z,t))$,
\[
C_m=\big\{\big( h, k , (z+xhL_k, t+yhL_k) \big)\in M^h_k: (x,y)\in[-l_k,2l_k-1]\times [0,l_k-1] \cap \mathbb{Z}^2\big\}.
\]
Note that
\begin{equation}
  \label{e:size_C_m}
  |C_m| \leq 3 \ell_k^2.
\end{equation}

\subsection{A recursive inequality}
The following proposition will be used several times in the paper and is a basis of our renormalization argument.

It is important to notice that $v_{\min}$ and $v_{\max}$ in the next proposition are free parameters that we choose in different ways throughout the text.
In particular they are not related to $v_-$ and $v_+$ introduced earlier.

\nck{sds}
\nck{k0}
\begin{proposition} \label{metalemma}
Fix $0<v_{\min}<v_{\max}\le 2$. Let $\uck{k0} = \uck{k0}(v_{\min})$ be such that $\ell_{\uck{k0}}\ge( 5/v_{\min})^2$. For any $k\ge \uck{k0}$ and for all $h\ge1$, we have that
\begin{equation}
p_{hL_{k+1}}\left(v_{\min}+\frac{v_{\max}-v_{\min}}{\sqrt{l_k}}\right)\le 9\ell_k^4\left(p_{hL_k}(v_{\max})+\left(p_{hL_k}(v_{\min})\right)^2+\uc{c:decouple}e^{-\left(hL_k\right)^{1/4}}\right).
\end{equation}
\end{proposition}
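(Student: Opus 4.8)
The plan is to implement a standard one-step renormalization: express a fast-deviation event at scale $k+1$ in terms of two fast-deviation events at scale $k$ that occur in horizontally well-separated boxes, and then pay for the dependence between these boxes using the lateral decoupling of Proposition \ref{p:decouple}. First, fix $m=(h,k+1,w)\in M^h_{k+1}$ and consider the event $A_m\big(v_{\min}+\tfrac{v_{\max}-v_{\min}}{\sqrt{l_k}}\big)$: some walk started in $I_m$ travels, over time $hL_{k+1}$, an average displacement at least $v_{\min}+\tfrac{v_{\max}-v_{\min}}{\sqrt{l_k}}$ per unit time. Tile $B_m$ by the sub-boxes $\{B_{m'}: m'\in C_m\}$ stacked in $l_k$ horizontal layers of $3l_k$ boxes each. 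Such a fast trajectory must cross all $l_k$ time-layers, and in each layer it enters some sub-box $B_{m'}$ at the bottom $I_{m'}$ and must exit it having moved with average speed at least $v_{\min}$ in that layer — unless it ever achieves speed at least $v_{\max}$ in some layer. So on $A_m(\cdot)$ either (i) there is a layer in which the crossing has average speed $\ge v_{\max}$, hence some $m'\in C_m$ with $A_{m'}(v_{\max})$, or (ii) in \emph{every} layer the crossing has average speed $\ge v_{\min}$, hence in every layer some $m'\in C_m$ with $A_{m'}(v_{\min})$. The choice $\ell_{k}\ge(5/v_{\min})^2$ and $v_{\max}\le 2$ is what makes the arithmetic of ``average of the layer-speeds at least $v_{\min}+(v_{\max}-v_{\min})/\sqrt{l_k}$ forces either case (i) or case (ii)'' work, after accounting for the fact that a layer-crossing can start at a non-lattice point inside $I_{m'}$ and that boundary effects cost at most an additive constant per layer.

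Next I would extract from case (ii) two sub-boxes at different scales-$k$ that are far apart horizontally. A trajectory realizing case (ii) has, across the $l_k$ layers, a net horizontal displacement of order $v_{\min} h L_{k+1}$; since each layer has horizontal width $3hL_k$ and there are $l_k$ of them, the trajectory visits sub-boxes whose horizontal coordinates span a range of order $v_{\min} h L_k l_k$. Hence among the layers there are two, say the bottom-most contributing one and a later one, whose selected sub-boxes $m_1, m_2\in C_m$ satisfy $d_s(m_1,m_2)\ge c\, v_{\min} h L_k$ for a suitable constant; the condition $\ell_k\ge (5/v_{\min})^2$ guarantees this separation is at least $(hL_k)^{3/4}$, which is exactly the hypothesis needed in Proposition \ref{p:decouple}. (One must also lift the walk-events to environment-measurable events supported on the respective boxes: the event that \emph{some} walk started in $I_{m'}$ exits $B_{m'}$ fast is a function of the environment restricted to $B_{m'}$, because a walk started in $I_{m'}$ stays in $B_{m'}$ over the relevant time, by \eqref{e:lipschitz}.) Then a union bound over the $\le 3\ell_k^2$ choices of the bad layer/sub-box in case (i), and over the $\binom{3\ell_k^2}{2}\le 9\ell_k^4$ pairs $(m_1,m_2)$ in case (ii), gives
\begin{equation}
  \mathbb{P}^\rho\big(A_m(\cdot)\big)\le 3\ell_k^2\, p_{hL_k}(v_{\max}) + 9\ell_k^4\sup\mathbb{P}^\rho\big(A_{m_1}(v_{\min})\cap A_{m_2}(v_{\min})\big),
\end{equation}
where the supremum is over admissible far-apart pairs.

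Finally I would bound the joint probability in case (ii) using Proposition \ref{p:decouple}: writing $f_i=\mathbf{1}[\exists\,y\in I_{m_i}: X^y \text{ exits } B_{m_i} \text{ fast}]$, which are $[0,1]$-valued and supported on $B_{m_1}, B_{m_2}$ respectively (note the boxes $B_{m_i}$ must be re-centered so they take the two-sided form $y_i+[-H,0]\times[0,H]$ or $y_i+[0,H]\times[0,H]$ appearing in Proposition \ref{p:decouple}; enlarging $B_{m_i}$ to this shape only increases $f_i$ and costs nothing), we get $\mathbb{E}^\rho[f_1f_2]\le \mathbb{E}^\rho[f_1]\mathbb{E}^\rho[f_2] + \uc{c:decouple}e^{-(hL_k)^{1/4}}\le p_{hL_k}(v_{\min})^2 + \uc{c:decouple}e^{-(hL_k)^{1/4}}$. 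Taking the supremum over $w$ to pass from $A_m$ to $p_{hL_{k+1}}$ and combining with the bound above, and absorbing the $3\ell_k^2$ factor into $9\ell_k^4$ (valid since $\ell_k\ge1$), yields exactly the claimed recursive inequality. The main obstacle I expect is not the decoupling step, which is routine once the setup is right, but the \emph{combinatorial/geometric bookkeeping in case (ii)}: carefully arguing that averaging the per-layer speeds really does force one of the two alternatives despite the additive $O(1)$ slack per layer coming from non-lattice starting points and the discreteness of $\mathbb{L}$, and simultaneously extracting a horizontal separation of at least $(hL_k)^{3/4}$ between two of the selected sub-boxes — both of which hinge delicately on the hypothesis $\ell_{k_0}\ge(5/v_{\min})^2$ and on $v_{\max}\le 2$.
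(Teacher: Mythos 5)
Your overall strategy — tile $B_m$ at scale $k$, find two horizontally well-separated sub-boxes where a fast deviation occurs, and pay for their dependence via Proposition \ref{p:decouple}, with the union-bound counts $3\ell_k^2$ and $9\ell_k^4$ — matches the paper. But the deterministic dichotomy you use to reduce $A_m\big(v_{\min}+\tfrac{v_{\max}-v_{\min}}{\sqrt{\ell_k}}\big)$ to scale-$k$ sub-events is incorrect: you claim that either some layer has speed $\geq v_{\max}$ or \emph{every} layer has speed $\geq v_{\min}$. Take $\ell_k=100$, $v_{\min}=0.1$, $v_{\max}=1$ (target speed $0.19$); a trajectory with per-layer speed $0.95$ in $50$ layers and $0$ in the other $50$ has average $0.475>0.19$, yet no layer reaches $v_{\max}$ and half the layers sit well below $v_{\min}$. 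The hypothesis $\ell_{k_0}\geq(5/v_{\min})^2$ controls additive boundary slack but cannot convert an average bound into the pointwise per-layer bound your case (ii) asserts, and your subsequent extraction of two far-apart $v_{\min}$-fast boxes rests entirely on that false premise, so your union bound does not cover all trajectories in $A_m$.

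The paper's Lemma \ref{metalemmadet} uses a more delicate decomposition. It sets $\mathcal{B}\subset C_m$ to be the sub-boxes where $A_{m'}(v_{\min})$ holds and tracks the \emph{first} and \emph{last} boxes $m_0, m_1\in\mathcal{B}$ that the trajectory actually visits, at time-levels $j_0\leq j_1$. Outside $[j_0hL_k,(j_1+1)hL_k]$ the walker sits in non-$\mathcal{B}$ boxes, so its per-layer speed is forced below $v_{\min}$. Then: if $j_1+1-j_0<\sqrt{\ell_k}$, the fast window is short and the slow layers dominate; if $j_1+1-j_0\geq\sqrt{\ell_k}$ and no far-apart pair in $\mathcal{B}$ exists (so case (b) of the lemma fails), then $|i_0-i_1|\leq 4$, hence the \emph{net} horizontal displacement over the whole fast window is $\leq 6hL_k$ regardless of what the individual layer speeds were. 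Either way the overall average is capped. The key is spatial pinning of the first and last $\mathcal{B}$-boxes, not fastness of every layer — this is the idea missing from your decomposition. A secondary slip: the separation needed for Proposition \ref{p:decouple} is $(hL_{k+1})^{3/4}$, not $(hL_k)^{3/4}$, since the enclosing time-aligned boxes have side $hL_{k+1}$; the paper's choice $d_s(m',m'')\geq 4hL_k$ gives residual separation $\geq hL_k\geq(hL_{k+1})^{3/4}$, which suffices.
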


The above proposition relates the probability of a speed-up event at scale $k + 1$ with similar probabilities at scale $k$.
But before proving Proposition \ref{metalemma}, prove the following deterministic lemma.
\begin{lemma}\label{metalemmadet}
  Fix $0 < v_{\min} < v_{\max} \le 2$.
  Let $\uck{k0} = \uck{k0}(v_{\min})$ be such that $\ell_{\uck{k0}}\ge( 6/v_{\min})^2$. For any $k\ge \uck{k0}$, for all $h\ge1$ and for all $m\in M^h_{k+1}$, we have that at least one of the following events happens:
\begin{itemize}
\item[$a)$] There exists $m'\in C_m$ such that $A_{m'}(v_{\max})$ occurs;
\item[$b)$] There exist $m',m''\in C_m$ such that $d_s(m',m'')\ge 4hL_k$ and such that the event $A_{m'}(v_{\min})\cap A_{m''}(v_{\min})$ occurs;
\item[$c)$]  $A^c_{m}\left(v_{\min}+\frac{v_{\max}-v_{\min}}{\sqrt{\ell_k}}\right)$ occurs.
\end{itemize}
\end{lemma}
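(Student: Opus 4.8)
\textbf{Proof plan for Lemma~\ref{metalemmadet}.}
The plan is to argue by contraposition: I would assume that neither $a)$ nor $b)$ occurs and then deduce that $c)$ must occur, i.e.\ that \emph{every} walk $X^y$ started from $y \in I_m$ satisfies $X^y_{hL_{k+1}} - \pi_1(y) < \big(v_{\min} + (v_{\max}-v_{\min})/\sqrt{\ell_k}\big) hL_{k+1}$. Fix such a walk and track its trajectory across the $\ell_k$ vertical ``layers'' of the tiling $C_m$, each of height $hL_k$. In each layer the walk enters some sub-box $B_{m'}$ with $m' \in C_m$ at the bottom; since $\neg a)$ holds, the event $A_{m'}(v_{\max})$ fails, so over that time interval of length $hL_k$ the walk advances by strictly less than $v_{\max} hL_k$. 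This already gives the crude bound that the total displacement is $< v_{\max} hL_{k+1}$, but that is far too weak; the point is to show that a layer in which the walk advances by more than $v_{\min} hL_k$ can happen at most a bounded number of times.

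The key step is the horizontal-separation argument using $\neg b)$. Call a layer \emph{fast} if the walk, during that layer, advances by at least $v_{\min}hL_k$ (so that the sub-box $B_{m'}$ it sits in at the start of the layer has $A_{m'}(v_{\min})$ occurring). If two fast layers occur at heights far enough apart, the horizontal positions of the corresponding entry sub-boxes $m', m''$ differ by at least $\approx (\text{number of intermediate fast layers})\cdot v_{\min} hL_k$ minus the backtracking allowed in the slow layers; here I use the Lipschitz bound \eqref{e:lipschitz} to control the total horizontal drift in the intervening layers (each layer moves the walk by at most $hL_k$ in absolute value, but a careful accounting — most layers are slow and the walk could in principle drift left — needs the bound $|X^y_t - X^y_s| \le t-s$ together with the fact that across $\ell_k$ layers the net displacement is bounded). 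I would make this quantitative: if there are at least, say, $2\sqrt{\ell_k}/v_{\min} + 1$ fast layers then two of them are separated (in the $d_s$ sense) by at least $4hL_k$, producing $m', m'' \in C_m$ with $d_s(m',m'') \ge 4hL_k$ and $A_{m'}(v_{\min}) \cap A_{m''}(v_{\min})$, contradicting $\neg b)$. Hence the number of fast layers is at most $2\sqrt{\ell_k}/v_{\min}$ — this is exactly where the hypothesis $\ell_{\uck{k0}} \ge (6/v_{\min})^2$ enters, guaranteeing $\ell_k \ge (6/v_{\min})^2$ so that $2\sqrt{\ell_k}/v_{\min} \le \ell_k/3$, a genuinely small fraction of the $\ell_k$ layers.

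Finally I would assemble the displacement bound. Split the $\ell_k$ layers into fast and slow ones. Over a fast layer the walk advances by $< v_{\max} hL_k$ (from $\neg a)$); over a slow layer by $< v_{\min} hL_k$ by definition. With at most $\ell_k/3$ fast layers (actually I should use the sharper count $2\sqrt{\ell_k}/v_{\min}$ directly) and at least $\ell_k - 2\sqrt{\ell_k}/v_{\min}$ slow layers, the total displacement is at most
\begin{equation}
\frac{2\sqrt{\ell_k}}{v_{\min}} \cdot v_{\max} hL_k + \ell_k \cdot v_{\min} hL_k = \Big(v_{\min} + \frac{2 v_{\max}}{v_{\min}\sqrt{\ell_k}}\Big) h L_{k+1}.
\end{equation}
Since $v_{\max} \le 2$ and $\ell_k \ge (6/v_{\min})^2$, one checks $2v_{\max}/(v_{\min}\sqrt{\ell_k}) \le v_{\max} - v_{\min}$ provided the constant in the hypothesis is chosen appropriately (the statement uses $(5/v_{\min})^2$ in Proposition~\ref{metalemma} and $(6/v_{\min})^2$ here, giving the needed slack), so the displacement is $< \big(v_{\min} + (v_{\max}-v_{\min})/\sqrt{\ell_k}\big) h L_{k+1}$, which is precisely the statement that $A_m^c(v_{\min} + (v_{\max}-v_{\min})/\sqrt{\ell_k})$ occurs, i.e.\ $c)$. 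The main obstacle I anticipate is the bookkeeping in the separation argument — correctly defining which sub-box of $C_m$ ``hosts'' the walk at the start of each layer (the walk may enter a layer not at a lattice point, so one must use the enclosing box via the Lipschitz property), and ensuring the horizontal displacement between two fast layers really is at least $4hL_k$ despite possible leftward motion in the slow layers in between. This requires the monotonicity/Lipschitz bounds of Proposition~\ref{p:monotone} and \eqref{e:lipschitz} to be applied with care to the net (not just per-layer) displacement.
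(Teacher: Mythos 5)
Your plan correctly identifies the contraposition structure and the role of $\neg a)$ and $\neg b)$, but the central counting step — ``if there are at least $2\sqrt{\ell_k}/v_{\min}+1$ fast layers then two of them are separated by at least $4hL_k$'' — is false, and this is not a bookkeeping issue that the Lipschitz bound can repair. A slow layer can move the walk to the \emph{left} by up to $hL_k$, which is typically much larger than the rightward gain $v_{\min}hL_k$ of a fast layer. Concretely, the walk could alternate ``fast layer (right by $v_{\min}hL_k$), slow layer (left by $v_{\min}hL_k$),'' returning to the same sub-box at the start of every fast layer; this is perfectly consistent with $\neg a)$ and $\neg b)$, produces $\sim \ell_k/2$ fast layers, yet no two fast-layer entry boxes separated by $4hL_k$. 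So there is no a priori bound on the number of fast layers — you cannot turn $\neg b)$ into a cardinality bound. (There is also a secondary arithmetic issue: even if the count bound $2\sqrt{\ell_k}/v_{\min}$ held, your final estimate yields a coefficient $2v_{\max}/(v_{\min}\sqrt{\ell_k})$ where $(v_{\max}-v_{\min})/\sqrt{\ell_k}$ is needed, and $2v_{\max}/v_{\min}\le v_{\max}-v_{\min}$ fails for generic admissible $(v_{\min},v_{\max})$; one would need the fast-layer count to be at most $\sqrt{\ell_k}$, not $2\sqrt{\ell_k}/v_{\min}$.)

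The paper's proof avoids counting bad layers altogether. It introduces $\mathcal{B}:=\{m'\in C_m: A_{m'}(v_{\min})\text{ occurs}\}$, lets $j_0$ and $j_1$ be the first and last layers at which the walk's current sub-box lies in $\mathcal{B}$, and splits on the size of $j_1+1-j_0$. If $j_1+1-j_0<\sqrt{\ell_k}$, the window $[j_0hL_k,(j_1+1)hL_k]$ where speed $v_{\max}$ might be attained is short (by $\neg a)$ at most $v_{\max}$ inside it, at most $v_{\min}$ outside it by definition of $j_0,j_1$), and the arithmetic closes exactly. If $j_1+1-j_0\ge\sqrt{\ell_k}$, the crucial observation is that \emph{both} $X^y_{j_0hL_k}$ and $X^y_{j_1hL_k}$ lie in boxes of $\mathcal{B}$, and $\neg b)$ forces all boxes of $\mathcal{B}$ into a horizontal window of width $4hL_k$; hence the \emph{net} horizontal displacement over the long middle window is at most $6hL_k$, regardless of how the walk oscillates there. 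This turns the oscillation scenario that breaks your argument into a strength: precisely because the bad-layer entry points are horizontally confined, the walk makes negligible net progress across the entire span of bad layers. If you want to salvage a fast-layer-based argument, you would first need to extract this horizontal confinement from $\neg b)$ and apply it only at the extremal bad layers, which is essentially what the paper does.
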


\begin{proof}
Assume that items $a)$ and $b)$ do not hold. Define
\begin{equation}
\mathcal{B}:=\left\{m'\in C_m:\ A_{m'}(v_{\min})\text{ holds}\right\}.
\end{equation}
For $y\in I_m$, let $m_0,m_1\in\mathcal{B}$ be the first and last indexes of $\mathcal{B}$, which are visited by $\big( X^y_t, 0 \le t \le hL_{k+1} \big)$.
More precisely, $0\le i_0\le i_1\le \ell_k-1$ such that $X^y_{j_0hL_k}\in I_{m_0}$, $X^y_{j_1hL_k}\in I_{m_1}$ with $m_0=(h,k,(i_0,j_0)L_k)$, $m_1=(h,k,(i_1,j_1)L_k)$, but
\todo{here one mentions that ``indices are visited''. Do we mean that the boxes $B_m$ are visited? Or $I_m$? \\ D: We mean $I_m$, as explained after the ``more precisely''}
\begin{equation}
\left\{m'\in\mathcal{B}: \exists t'\in[0,j_0hL_k)\cup(j_1hL_k,hL_{k+1}]\text{ such that } X^y_{t'}\in I_{m'}\right\}=\emptyset.
\end{equation}
We need to consider two cases.

\begin{figure}[h]   \center
\includegraphics[scale=.8]{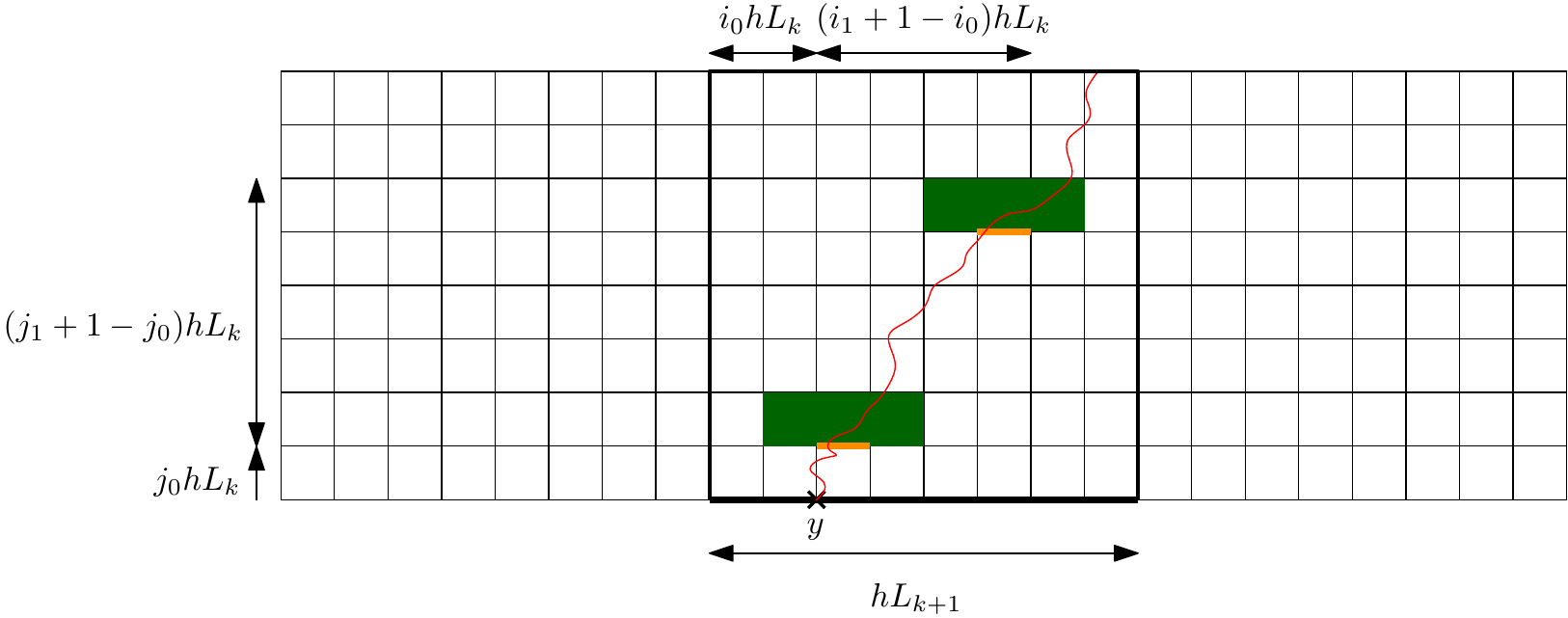}
   \caption{The points $(i_0,j_0)$ and $(i_1,j_1)$}
\end{figure}	

\noindent
{\bf Case 1: assume $j_1+1-j_0<\sqrt{\ell_k}$.}\\
As the event $a)$ does not occur, $X^y_\cdot$ moves at speed at most $v_{\max}$ between times $j_0hL_k$ and $(j_1+1)hL_k$. Moreover, by definition of $\mathcal{B}$, $j_0$ and $j_1$, $X^y_\cdot$ moves at speed at most $v_{\min}$ before time $j_0 h L_k$ and after time $(j_1 + 1) h L_k$.
Therefore, we have that
\todo{Here I changed $j_1$ by $j_1 + 1$, correct? Because between the two it could have a higher speed.\\D:yes, this looks correct to me, and in fact I add to ``+1'' elsewhere too.}
\begin{equation}
\begin{split}
X^y_{hL_{k+1}}-\pi_1(y)&\le v_{\max}\sqrt{\ell_k}hL_k+v_{\min}(\ell_k-\sqrt{\ell_k})hL_k\\
&\le hL_{k+1}\left(v_{\min}+\frac{v_{\max}-v_{\min}}{\sqrt{\ell_k}}\right).
\end{split}
\end{equation}
This implies that, in this case, $A^c_{m}\left(v_{\min}+\frac{v_{\max}-v_{\min}}{\sqrt{\ell_k}}\right)$ occurs, i.e.~item $c)$.\\

{\bf Case 2: assume $j_1+1-j_0\ge\sqrt{\ell_k}$.}\\
Again, we will use that $X^y_\cdot$ moves at speed at most $v_{\min}$ before time $j_0hL_k$ and after time $(j_1+1)hL_k$. Moreover, recall that $X^y_{j_0hL_k}\ge i_0hL_k$ and $X^y_{(j_1+1)hL_k}\le (i_1+2)hL_k$ and note that, as the event described in $b)$ does not occur, we have that $|i_0-i_1|\le 4$. This yields, as $a)$ does not occur and $6/\sqrt{\ell_k}<v_{\min}$,
\begin{equation}
\begin{split}
  X^y_{hL_{k+1}}-\pi_1(y)& \leq \left(X^y_{j_0hL_{k}}-\pi_1(y)\right)+\left( X^y_{hL_{k+1}}-X^y_{(j_1+1)hL_{k}} \right)+\left(X^y_{(j_1+1)hL_{k}}-X^y_{j_0hL_{k}}\right)\\
  & \leq v_{\min}hL_{k}\left(\ell_k-({j_1+1-j_0})\right)+6hL_k\\
  & \leq hL_{k+1}\left(v_{\min}-\frac{v_{\min}}{\sqrt{\ell_k}}+\frac{6}{\ell_k}\right).
\end{split}
\end{equation}
This implies again that $A^c_{m}\left(v_{\min}+\frac{v_{\max}-v_{\min}}{\sqrt{\ell_k}}\right)$ occurs, concluding the proof.
\end{proof}

We are now ready to prove Proposition \ref{metalemma}.

\begin{proof}[Proof of Proposition \ref{metalemma}]
  Given $m \in M_{k+1}^h$, we use Lemma~\ref{metalemmadet}, we obtain the following inclusion,
  \begin{equation}
    A_m \Big( v_{\min} + \frac{v_{\max} - v_{\min}}{\sqrt{\ell_k}} \Big) \subseteq \bigcup_{m' \in C_m} A_{m'} (v_{\max}) \cup \bigcup_{m', m'' \in C_m} A_{m'}(v_{\min}) \cap A_{m''}(v_{\min}).
  \end{equation}
  And we now have to bound the probability of the left hand side event.

  Let $m'=(h,k,(i'hL_k,t)),m''=(h,k,(i''hL_k,t))\in C_m$, with $i'\le i''$,  such that $d_s(m',m'')\ge 4hL_k$, i.e.~$i''-i'\ge4$.
  The events $A_{m'}(v_{\min})$ and $A_{m''}(v_{\min})$ are respectively supported by the boxes $((i'+2)hL_k,0)+[-hL_{k+1},0]\times[0,hL_{k+1}]$ and $((i''-1)hL_k,0)+[0,hL_{k+1}]$.

  As $(i''-i'-3)hL_{k}\ge hL_k\ge (hL_{k+1})^{3/4}$, we can apply Proposition \ref{p:decouple}.
  Recalling that $|C_m|=3\ell_k^2$, we have that
  \begin{equation}
    \begin{split}
      p_h L_{k+1} \Big( v_{\min} & + \frac{v_{\max} - v_{\min}}{\sqrt{\ell_k}} \Big) \le 3\ell_k^2 p_{hL_k}(v_{\max})\\
      & \quad + 9 \ell_k^4 \sup_{m', m'' \in C_m; d_s(m', m'') \ge 4 h L_k} \mathbb{P} \left[ A_{m'}(v_{\min}) \cap A_{m''}(v_{\min}) \right]\\
      & \overset{Proposition~\ref{p:decouple}}\le 9 \ell_k^4 \left( p_{hL_k}(v_{\max}) + p_{hL_k}(v_{\min})^2 + \uc{c:decouple} e^{-(hL_k)^{1/4}} \right)
    \end{split}
  \end{equation}
  Concluding the proof of the proposition.
\end{proof}

\subsection{Bound on $p_H(v)$}
\label{s:decay_scale}
We will first prove the following result, which states a strong decay for the probability to go faster than $(v_+\vee 0)$, along a particular subsequence of times. Once we establish this result, we will simply need to interpolate it to any value $H \ge 1$.

\begin{lemma}\label{l:deviation_reduction}
For all  $v>(v_+\vee 0)$ there exists $\uc{c:h_hat} = \uc{c:h_hat}(v) \geq 1$ and $\uck{c:k_bar}=\uck{c:k_bar}(v) \geq 1$ such that for every $k \geq \uck{c:k_bar}$
\begin{equation}
\label{e:deviation_reduction}
p_{\uc{c:h_hat}L_k}(v) \leq \exp\left({-4\left(\ln(L_{k})\right)^{3/2}}\right).
\end{equation}
\end{lemma}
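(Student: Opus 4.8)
The plan is to iterate the recursive inequality of Proposition \ref{metalemma} starting from a coarse-enough initial scale. Fix $v > (v_+\vee 0)$. Since $v > v_+$, by the definition of $v_+$ in \eqref{e:v_+} there is a diverging sequence of $H$'s along which $p_H(v')\to 0$ for any $v' \in (v_+, v)$. The first step is to choose intermediate speeds: pick $v_{\min}$ and $v_{\max}$ with $v_+\vee 0 < v_{\min} < v_{\max} < v$, and — this is the crucial point — arrange that the ``speed loss'' in the recursion is summable, namely that $v_{\min} + \sum_{j\ge k_0}\frac{v_{\max}-v_{\min}}{\sqrt{\ell_j}} < v$. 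This is possible because $\ell_j = \lfloor L_j^{1/4}\rfloor$ grows doubly-exponentially fast (by \eqref{e:scale_round}, $L_{k+1}\asymp L_k^{5/4}$), so $\sum_j \ell_j^{-1/2}$ converges and can be made as small as we like by taking $k_0$ large. Then, writing $v_{k} := v_{\min} + \sum_{j=k_0}^{k-1}\frac{v_{\max}-v_{\min}}{\sqrt{\ell_j}}$, we have $v_{k}\uparrow v_\infty < v$, so $p_{hL_k}(v)\le p_{hL_k}(v_k)$ for all $k$ and all $h\ge 1$, and it suffices to control $p_{hL_k}(v_k)$.

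The second step is to set up the renormalization. Define $a_k := p_{\hat h L_k}(v_k)$ for a fixed $\hat h = \hat h(v)\ge 1$ to be chosen. Proposition \ref{metalemma}, applied with the pair $(v_{\min}^{(k)}, v_{\max}) := (v_k, v_{\max})$ — note $v_k < v_{\max}$ and $v_{k+1} = v_k + (v_{\max}-v_{\min})/\sqrt{\ell_k}$ which is at most $v_k + (v_{\max}-v_k)/\sqrt{\ell_k}$, so the hypothesis is met after adjusting constants, or more cleanly one reindexes so the increments telescope exactly — gives, for $k\ge k_0$,
\begin{equation}
a_{k+1} \le 9\ell_k^4\Big(p_{\hat h L_k}(v_{\max}) + a_k^2 + \uc{c:decouple}e^{-(\hat h L_k)^{1/4}}\Big).
\end{equation}
Since $v_{\max} > v_+\vee 0 \ge v_k$ is false in general — rather $v_{\max} > v_\infty > v_k$ — we in fact have $p_{\hat h L_k}(v_{\max})\le a_k$ is the wrong direction; instead observe $v_{\max} \ge v_k$ need not hold for all $k$, so one should simply choose the two free parameters in Proposition \ref{metalemma} to be $v_k$ and some fixed $v^\star$ with $v_\infty < v^\star < v$ and bound $p_{\hat h L_k}(v^\star)\le a_k$ trivially when $v^\star \ge v_k$. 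The upshot is a clean recursion of the form $a_{k+1}\le 9\ell_k^4(2a_k^2 + \uc{c:decouple}e^{-(\hat h L_k)^{1/4}})$ valid for $k\ge k_0$.

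The third step is the bootstrap. Using that $\ell_k^4 = \lfloor L_k^{1/4}\rfloor^4 \le L_k$ and $L_{k+1}\le L_k^{5/4}$, a standard induction shows that if $a_{k_0}$ is small enough — smaller than some threshold depending only on $L_{k_0}$ — then $a_k \le \exp(-4(\ln L_k)^{3/2})$ for all $k\ge k_0$: one checks that the claimed bound is stable under the recursion since $9\ell_k^4 \cdot 2a_k^2 \le 18 L_k e^{-8(\ln L_k)^{3/2}} \le \tfrac12 e^{-4(\ln L_{k+1})^{3/2}}$ because $(\ln L_{k+1})^{3/2}\le (\tfrac54 \ln L_k)^{3/2} = (\tfrac54)^{3/2}(\ln L_k)^{3/2}$ and $(\tfrac54)^{3/2} < 2$, so $8(\ln L_k)^{3/2} - 4(\ln L_{k+1})^{3/2} \ge (8 - 4(\tfrac54)^{3/2})(\ln L_k)^{3/2} \ge c(\ln L_k)^{3/2}$ dominates $\ln(18 L_k)$; the sprinkling/decoupling term $9\ell_k^4 \uc{c:decouple}e^{-(\hat h L_k)^{1/4}}$ is super-polynomially small in $L_k$ and hence also absorbed. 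To launch the induction we must secure $a_{k_0} = p_{\hat h L_{k_0}}(v^\star)$ below the threshold; this is where we use $v^\star > v_+$ together with the definition \eqref{e:v_+}: along the subsequence furnished by $\liminf_H p_H(v^\star) = 0$ we can find a scale at which $p_H(v^\star)$ is as small as needed, and then choose $\hat h$ so that $\hat h L_{k_0}$ equals (or, via the Lipschitz/monotonicity bound \eqref{e:lipschitz}, exceeds appropriately) that scale — adjusting $k_0$ upward if necessary so that $k_0 \ge \uck{k0}(v_{\min})$ as required by Proposition \ref{metalemma}. The main obstacle is precisely this bookkeeping: making the summable-speed-loss choice, the initial-scale smallness, and the constraint $k_0\ge \uck{k0}$ all compatible simultaneously, and verifying that the exponent $3/2$ is genuinely preserved — i.e.\ that $4(\tfrac54)^{3/2} < 8$, which is the quantitative reason the bound $\exp(-4\ln^{3/2}L_k)$ (rather than a pure exponential) is what survives the $L_k \mapsto L_k^{5/4}$ scale growth.
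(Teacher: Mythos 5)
The skeleton of your argument is right — intermediate speeds that are summably close to $v$, the recursive inequality of Proposition~\ref{metalemma}, a launch from a scale where $p$ is already small, and the computation that $(5/4)^{3/2}<2$ makes the $\exp(-4\ln^{3/2}L_k)$ bound self-sustaining. But there is a genuine gap in how you set up $v_{\max}$, and it breaks the bootstrap.

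You insist on $v_{\max}<v$, so the term $p_{\hat h L_k}(v_{\max})$ in Proposition~\ref{metalemma} is not zero. The best you can do is bound it by $a_k := p_{\hat h L_k}(v_k)$ using $v_{\max}>v_k$ and monotonicity, which yields the recursion
\begin{equation}
a_{k+1}\le 9\ell_k^4\bigl(a_k + a_k^2 + \uc{c:decouple}e^{-(\hat h L_k)^{1/4}}\bigr),
\end{equation}
not the $9\ell_k^4(2a_k^2+\cdots)$ you write. The \emph{linear} term $9\ell_k^4 a_k$ is fatal: if $a_k=e^{-4(\ln L_k)^{3/2}}$ then
\begin{equation}
\frac{9\ell_k^4 a_k}{e^{-4(\ln L_{k+1})^{3/2}}}\;\approx\; 9\,e^{\ln L_k + \bigl(4(5/4)^{3/2}-4\bigr)(\ln L_k)^{3/2}}\longrightarrow\infty,
\end{equation}
since $4(5/4)^{3/2}-4\approx 1.59>0$. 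Only the \emph{quadratic} term, with exponent $-8(\ln L_k)^{3/2}$ beating $-4(5/4)^{3/2}(\ln L_k)^{3/2}\approx -5.59(\ln L_k)^{3/2}$, is actually absorbable. So the induction as you've written it does not close.

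The fix — and what the paper does — is to take $v_{\max}=2$ (or anything $>1$), which is explicitly allowed by the hypothesis $0<v_{\min}<v_{\max}\le 2$ of Proposition~\ref{metalemma}. Because the walk is Lipschitz with constant $1$ (see \eqref{e:lipschitz}), $p_{H}(2)=0$ \emph{identically}, so the linear term vanishes and you are left with exactly the $9\ell_k^4(a_k^2+\uc{c:decouple}e^{-(\hat h L_k)^{1/4}})$ recursion you wanted. The constraint $v_\infty\le v$ is then enforced purely through the choice of the starting speed $v_{\uck{c:k_bar}}$ and the increments (e.g.\ geometric increments $(v-(v_+\vee0))/2^{k+1}$, with $\uck{c:k_bar}$ large enough that $2/\sqrt{\ell_k}\le (v-(v_+\vee0))/2^{k+1}$); there is no need for $v_{\max}$ itself to be below $v$, and in fact requiring that is exactly what breaks the argument.
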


\nck{c:k_bar}
\nc{c:h_hat}
From now on, we fix  $v>(v_+\vee 0)$.
Recall the definition of $\ell_k$ below \eqref{e:L_k} and let \nck{k:speed} $\uck{k:speed} = \uck{k:speed}(v)$ be such that, for all $k\ge \uck{k:speed}$,
\begin{equation}\label{ineq:ak}
{\ell_{k}}\ge\left( \left(\frac{2^{k+3}}{v-(v_+\vee0)}\right)\vee (5/v_+)\right)^2
\end{equation}
This exact choice for the constant $\uck{k:speed}$ will become clear during the proof of Lemma~\ref{l:deviation_reduction}, but for now it suffices to observe that it is well-defined because $\ell_k$ grows super-exponentially fast.
Let us define the following sequence of speeds:
\begin{equation}
 \label{e:v_k_o}
  v_{\uck{k:speed}} := \frac{v+(v_+\vee0)}{2} \quad \text{and} \quad v_{k + 1} := v_k +\frac{v-(v_+\vee0)}{2^{k+1}} \text{ for every $k \geq \uck{k:speed}$}.
\end{equation}

We have that
\begin{equation}
  v_\infty := \lim_{k \to \infty} v_k \le v_{\uck{k:speed}} + (v - (v_+\vee0))/2 = v.
\end{equation}

\begin{figure}
  \begin{center}
    \begin{tikzpicture}
      \draw (-1, 0) -- (10, 0);
      \draw (0, .2) -- (0, -.2) node [below] {$v_+$};
      \draw (3, .2) -- (3, -.2) node [below] {$v_{\uck{k:speed}}$};
      \draw (6.3, .2) -- (6.3, -.2) node [below] {$v\wedge 1$};
    %  \draw (9, .2) -- (9, -.2) node [below] {$2$};
      \draw (4, .2) -- (4, -.2) node [below] {$v_{\uck{k:speed} + 1}$};
      \draw (4.5, .2) -- (4.5, -.2);
      \draw (4.9, .2) -- (4.9, -.2);
      \draw (4.75, .2) -- (4.75, -.2) node [below] {$\dots$};
      \draw (5.5, .2) -- (5.5, -.2) node [below] {$v_{\infty}$};
    \end{tikzpicture}
  \end{center}
  \caption{The sequence of velocities $v_k$ as defined in \eqref{e:v_k_o}}
  \label{fig:v_k}
\end{figure}
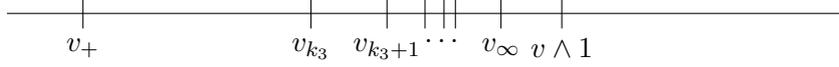

Recall the definition of $C_m$ below \eqref{e:d_s}.
We are now ready to conclude the proof of Lemma~\ref{l:deviation_reduction}.

\begin{proof}[Proof of Lemma~\ref{l:deviation_reduction}]
  Observe first that $2 > (5/4)^{3/2} \sim 1.4$, so that we can choose $\uck{c:k_bar} \ge \uck{k:speed}$ such that, for any $k\ge \uck{c:k_bar}$,
  \begin{equation}\label{e:condk0}
9 \ell_k^4 \left( e^{-4(2 - (5/4)^{3/2}) \left( \ln(L_k) \right)^{3/2}} + \uc{c:decouple} e^{-(L_k)^{1/4}+8\ln(L_k)} \right)\le1,
  \end{equation}
  where $\uc{c:decouple}=\uc{c:decouple}(\rho)$ is defined in Proposition \eqref{p:decouple}.
  Since $v_{\uck{c:k_bar}} > v_+$, we have
  \begin{equation}
    \liminf_{h \to \infty} p_{hL_{\uck{c:k_bar}}}(v_{\uck{c:k_bar}}) = 0.
  \end{equation}
  Therefore, we fix $\uc{c:h_hat}(v)\geq 1$ for which
  \begin{equation}
    p_{\uc{c:h_hat}L_{\uck{c:k_bar}}}(v_{\uck{c:k_bar}}) \leq e^{-4\left(\ln(L_{\uck{c:k_bar}})\right)^{3/2}}.
  \end{equation}
  \todo{I changed $2$ to $4$, right?\\D:Yep}
  Now we can iteratively use Proposition \ref{metalemma}, for every $k\ge \uck{c:k_bar}$, by choosing $v_{\min}=v_k$ and $v_{\max}=2$. In particular, $p_{\uc{c:h_hat}L_k}(v_{\max})=0$ and, by \eqref{ineq:ak},
  \begin{equation}
    v_{k+1}\ge v_k+\frac{2}{\sqrt{\ell_k}}\ge v_k+\frac{v_{\max}-v_{\min}}{\sqrt{\ell_k}}.
  \end{equation}
  Therefore, we can obtain the statement of the lemma through induction, by simply observing that for all $k\ge\uck{c:k_bar}$,
  \begin{equation}
    \begin{split}
      \frac{p_{\uc{c:h_hat} L_{k+1}} (v_{k+1})} {e^{-4 \left( \ln(L_{k+1}) \right)^{3/2}}} & \le 9 \ell_k^4 \left( e^{-8 \left(\ln(L_k) \right)^{3/2}} + \uc{c:decouple} e^{-(\uc{c:h_hat} L_k)^{1/4}} \right) e^{4 \left( \ln(L_{k+1}) \right)^{3/2}}\\
      & \le 9 \ell_k^4 \left( e^{-4(2 - (5/4)^{3/2}) \left( \ln(L_k) \right)^{3/2}} + \uc{c:decouple} e^{-(L_k)^{1/4}+8\ln(L_k)} \right) \le 1.
    \end{split}
  \end{equation}
  where we used \eqref{e:condk0}, the fact that $\uc{c:h_hat}\ge1$ and $L_k^{3/4}\ge L_k^{1/2}$ for $k\ge0$.
  \todo{I removed something that didn't make sense to me here...\\D:does not make sense to me either... looks better now}
\end{proof}

\subsection{Proof of Lemma~\ref{l:deviation_interp}}
\label{s:proof_deviation_interp}
With Lemma~\ref{l:deviation_reduction} at hand, we just need an interpolation argument to establish Lemma~\ref{l:deviation_interp}. Let $v=(v_+\vee0)+\epsilon$, $v' = ((v_+\vee0) + v)/2$ and let $\uc{c:h_hat}(v')$ and $\uck{c:k_bar}(v')$ be as in Lemma~\ref{l:deviation_reduction}. For $H \ge1$ let us define $\bar{k}$ as being the integer that satisfies:
\begin{equation}
 \label{e:change_scale}
  \uc{c:h_hat} L_{\bar{k}+1} \leq H < \uc{c:h_hat} L_{\bar{k} + 2}.
\end{equation}
Let us first assume that $H$ is sufficiently large so that $\bar{k} \geq \uck{c:k_bar}$, that
\begin{equation}
  \label{e:H_large}
  \frac{\uc{c:h_hat}}{\ell_{\bar{k}}} H\le \frac{v-v'}{2}H \text{, and }8L_{\bar{k}}^{9/8}e^{-2{\left(\ln(L_{\bar{k}})\right)^{3/2}}}\le 1.
\end{equation}

Therefore, we can apply Lemma~\ref{l:deviation_reduction} to conclude that
\begin{equation}\label{e:deviation_reduction2}
p_{\uc{c:h_hat}L_{\bar{k}}}(v')\leq \exp(-4{\left(\ln(L_{\bar{k}})\right)^{3/2}}).
\end{equation}

Now, in order to bound $p_H(v)$, we are going to start by fixing some $w \in\mathbb{R}^2$ and pave the box $B^1_H( w)$ with boxes $B_{m}$  with $m\in M^{\uc{c:h_hat}}_{\bar{k}}$ such that $m=(\uc{c:h_hat},\bar{k},w+(x\uc{c:h_hat}L_{\bar{k}}, y\uc{c:h_hat}L_{\bar{k}})$, where $-\lceil H/\uc{c:h_hat}L_{\bar{k}}\rceil\le x\le \lceil 2H/\uc{c:h_hat}L_{\bar{k}}\rceil$ and $0\le y \le \lceil H/\uc{c:h_hat}L_{\bar{k}}\rceil$ are integers.
Let us denote $M$ the set of such indices. Note that
\begin{equation}\label{sons}
|M|\le 8\left(\frac{H}{\uc{c:h_hat}L_{\bar{k}}}\right)^2\le 8\left(\frac{L_{\bar{k}+2}}{L_{\bar{k}}}\right)^2\le  8\left(\frac{L_{\bar{k}}^{25/16}}{L_{\bar{k}}}\right)^2\le 8 L_{\bar{k}}^{9/8}.
\end{equation}

An important observation at this point is that, on the event $\cap_{m\in M} (A_{m}(v'))^{\mathsf{c}}$, for any $y \in I^1_H( w)$ the displacement of $X^y$ up to time ${\lfloor H/\uc{c:h_hat}  L_{\bar{k}} \rfloor \uc{c:h_hat} L_{\bar{k}}}$ can be bounded by
\begin{equation}\label{e:inter1}
 \begin{split}
  X^y_{\lfloor H/\uc{c:h_hat} L_{\bar{k}} \rfloor \uc{c:h_hat} L_{\bar{k}}} - \pi_1(y)
  & = \sum_{j = 0}^{\lfloor H/\uc{c:h_hat} L_{\bar{k}} \rfloor - 1}
  X^{X^y_{j \uc{c:h_hat} L_{\bar{k}}}}_{\uc{c:h_hat} L_{\bar{k}}} - X^y_{j \uc{c:h_hat} L_{\bar{k}}} \\
  & \leq v' \lfloor H/\uc{c:h_hat} L_{\bar{k}} \rfloor \uc{c:h_hat} L_{\bar{k}} \leq v' H.
 \end{split}
\end{equation}
where we used that $A_m(v')$ does not occur for any $m \in M$ and that each point $X^y_{j \uc{c:h_hat} L_{\bar{k}}}$ belongs to $I_{m}$ for some $m' \in C_m$.
Besides, we have that
\begin{equation}\label{e:inter2}
H-{\left\lfloor \frac{H}{\uc{c:h_hat}  L_{\bar{k}}} \right\rfloor \uc{c:h_hat} L_{\bar{k}}}\le \uc{c:h_hat} L_{\bar{k}}\le \frac{\uc{c:h_hat}}{\ell_{\bar{k}}} H \overset{\eqref{e:H_large}}\le \frac{v-v'}{2}H.
\end{equation}
Therefore, by the Lipschitz condition \eqref{e:lipschitz} and \eqref{e:inter1}, on the event $\cap_{m\in M} (A_{m}(v'))^{\mathsf{c}}$, for any $y \in I^1_H( w)$,
\begin{equation}\label{e:inter3}
  X^y_H - \pi_1(y)< vH.
\end{equation}
Thus, using \eqref{e:H_large} and \eqref{sons}, this yields that
\todo{Where is this entropy estimate of $9/16...$?\\D: I added \eqref{sons}}
\begin{equation}\label{e:inter4}
  \begin{split}
    \mathbb{P} \big(A_{H,w}(v)\big)
        & \leq 8L_{\bar{k}}^{9/8}\exp(-4{\left(\ln(L_{\bar{k}})\right)^{3/2}}) \leq e^{-2\ln^{3/2}\left(H\right)}.
  \end{split}
\end{equation}
The conclusion of Lemma~\ref{l:deviation_interp} now follows by taking the suppremum over all $w \in [0,1) \times \{0\}$ and then properly choosing the constant $\uc{c:deviation}$ in order to accommodate small values of $H$.
\todo{I think this end of the argument could improve.}

This finishes the proof of Lemma~\ref{l:deviation_interp}.

\section{Proof of Theorem \ref{l:v_+=v_-}}
\label{s:threat_points}

As we discussed above, we want to show that $v_+ = v_-$.
We will assume by contradiction that $v_+>v_-$.
Then either $v_+>0$ or $v_-<0$.
We pick $v_+>0$.
The other case can be handled analogously by symmetry.

Let us define
\begin{equation}
  \label{e:delta}
\delta:=\frac{v_+ - v_-}{4}.
\end{equation}
Note $\delta \in (0, 1/2]$, since we argue by contradiction and assume that $v_+ > v_-$.

The goal of this section is to prove the following proposition which, as we show below, immediately implies Theorem \ref{l:v_+=v_-}.
\nck{kfinaldelay}  \nc{minhfinal} \nc{c:vdelay}
\begin{proposition}\label{prop:final_delay}
There exist $\uck{kfinaldelay}(\uc{c:decouple})$,  $\uc{c:vdelay}(v_+,v_-,k)=\uc{c:vdelay}(k)\ge1$ and $\uc{minhfinal}(\delta, \uck{kfinaldelay})$ such that, for all $k\ge \uck{kfinaldelay}$, for all $h\ge \uc{minhfinal}$, for all $m\in M^h_k$,
\begin{equation}\label{makesgoodforyou}
\mathbb{P}\left( A_m\left( v_+-\frac{\delta}{2\uc{c:vdelay}(\uck{kfinaldelay})} \right) \right)\le e^{-\left(\ln L_k\right)^{3/2}}.
\end{equation}
\end{proposition}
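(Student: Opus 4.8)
The plan is to derive a contradiction from the standing hypothesis $v_+>v_-$ (we are in the case $v_+>0$), the contradiction then yielding Theorem~\ref{l:v_+=v_-}. Granting Proposition~\ref{prop:final_delay}, the velocity $v_+-\frac{\delta}{2\uc{c:vdelay}(\uck{kfinaldelay})}$ is strictly less than $v_+$ (since $\delta>0$ and $\uc{c:vdelay}\ge1$), whereas the definition \eqref{e:v_+} of $v_+$ forces $\liminf_{H\to\infty}p_H(v)>0$ for every $v<v_+$; as $hL_k\to\infty$, \eqref{makesgoodforyou} would give $\liminf_H p_H\big(v_+-\frac{\delta}{2\uc{c:vdelay}(\uck{kfinaldelay})}\big)=0$, a contradiction. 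Hence $v_+\le v_-$, and with Corollary~\ref{l:v_+<=v_-} this gives $v_+=v_-$. So the whole task is to prove \eqref{makesgoodforyou} under the assumption $v_+>v_-$.

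For the renormalization engine I would argue as in Section~\ref{s:decay_scale}. Fix a small $\epsilon=\epsilon(\delta)>0$, put $v_{\max}:=(v_++\epsilon)\wedge 2>v_+$, so Lemma~\ref{l:deviation_interp} gives $p_H(v_{\max})\le \uc{c:deviation}(\epsilon/2,\rho)\,e^{-2\ln^{3/2}H}$ for all $H\ge1$ (``very fast'' sub-boxes are rare at every scale). Introduce the increasing velocities $v_{k+1}:=v_k+\frac{v_{\max}-v_k}{\sqrt{\ell_k}}$, started from a value $v_{\uck{kfinaldelay}}=v_+-\delta/\uc{c:vdelay}(\uck{kfinaldelay})\in(v_-,v_+)$ furnished by the base case; because $\ell_k$ grows super-exponentially, $\sum_{k\ge \uck{kfinaldelay}}\ell_k^{-1/2}$ is tiny for $\uck{kfinaldelay}$ large, and $v_\infty:=\lim_k v_k$ satisfies $v_\infty-v_{\uck{kfinaldelay}}\le(v_{\max}-v_{\uck{kfinaldelay}})\sum_{k\ge \uck{kfinaldelay}}\ell_k^{-1/2}$, which is at most $\delta/(2\uc{c:vdelay}(\uck{kfinaldelay}))$ provided $\uck{kfinaldelay}$ is large and $\epsilon$ is small; then $v_\infty\le v_+-\frac{\delta}{2\uc{c:vdelay}(\uck{kfinaldelay})}$, the velocity in \eqref{makesgoodforyou}. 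Iterating Proposition~\ref{metalemma} with $v_{\min}=v_k$ (legitimate once $\ell_k\ge(5/v_{\uck{kfinaldelay}})^2$), i.e.\ using
\[
p_{hL_{k+1}}(v_{k+1})\le 9\ell_k^4\big(p_{hL_k}(v_{\max})+p_{hL_k}(v_k)^2+\uc{c:decouple}\,e^{-(hL_k)^{1/4}}\big),
\]
and inserting the Lemma~\ref{l:deviation_interp} bound on the first term, the inductive hypothesis $p_{hL_k}(v_k)\le e^{-(\ln L_k)^{3/2}}$ on the squared term, and $L_{k+1}\le L_k^{5/4}$ (so $(\ln L_{k+1})^{3/2}\le(5/4)^{3/2}(\ln L_k)^{3/2}$), the same arithmetic as in the proof of Lemma~\ref{l:deviation_reduction} absorbs the prefactor $9\ell_k^4$ and the decoupling term for $k$ large, giving $p_{hL_{k+1}}(v_{k+1})\le e^{-(\ln L_{k+1})^{3/2}}$. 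The induction closes, and since $p_H(\cdot)$ is non-increasing in its velocity argument and $v_k\le v_\infty$, this yields \eqref{makesgoodforyou} for all $k\ge \uck{kfinaldelay}$ and all $h\ge\uc{minhfinal}$.

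The hard part is the base case: I must produce $p_{\uc{minhfinal}L_{\uck{kfinaldelay}}}(v_{\uck{kfinaldelay}})\le e^{-(\ln L_{\uck{kfinaldelay}})^{3/2}}$ at a velocity $v_{\uck{kfinaldelay}}$ strictly below $v_+$, a regime in which Lemma~\ref{l:deviation_interp} is silent, and this is the only place where the strict inequality $v_+>v_-$ can be used. The mechanism I would try relies on the monotonicity and coalescence of the coupled family (Proposition~\ref{p:monotone}): a walk that has been overtaken by a faster one can never again be slower than it, so if on a large box one walk averages at least $v'$ over a time $T$ while, far to its right, another walk averages at most $v''<v'$ over the same $T$, their starting points must lie at horizontal distance at least $(v'-v'')T$, and between them there is a whole sub-interval of starting points from which the walk is forced to stay fast. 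Combined with the lateral decoupling (Proposition~\ref{p:decouple}), this rigidity should allow a renormalization internal to scale $\uck{kfinaldelay}$ in which the surviving bad event at each step is a pair of well-separated boxes carrying complementary (``fast'' resp.\ ``not too fast'') constraints; the gap $v_+-v_-$ then guarantees that after finitely many steps this pair event has probability below $1$, and iterating drives it down to $e^{-(\ln L_{\uck{kfinaldelay}})^{3/2}}$ at a velocity $v_{\uck{kfinaldelay}}=v_+-\delta/\uc{c:vdelay}(\uck{kfinaldelay})$ for a suitable finite $\uc{c:vdelay}(\uck{kfinaldelay})\ge1$, once $L_{\uck{kfinaldelay}}$ and $\uc{minhfinal}$ are large.

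I expect the main obstacle to be exactly this seeding step — turning ``decay of $p_H$ above $v_+$'' into ``decay of $p_H$ at some velocity below $v_+$'' using only monotonicity, the lateral decoupling, and $v_+>v_-$. The two delicate points are: (i) extracting genuinely far-apart, decorrelatable boxes from the forced-fast interval, which is short compared with $T$ (so one is pushed to work with sub-boxes of a smaller scale, on which the forced trajectories still satisfy a useful constraint); and (ii) controlling the entropy of these choices so that the polynomial prefactors do not swamp the superpolynomial rate $e^{-(\ln L_k)^{3/2}}$. By contrast, the renormalization of the second paragraph is essentially a repetition of the computation already carried out in Section~\ref{s:decay_scale}.
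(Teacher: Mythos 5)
Your first paragraph (how Proposition~\ref{prop:final_delay} implies Theorem~\ref{l:v_+=v_-}) and your second paragraph (the renormalization via Proposition~\ref{metalemma} with velocities $v_k\nearrow v_\infty\le v_+-\delta/(2\uc{c:vdelay}(\uck{kfinaldelay}))$) are essentially the same as the paper's, modulo a harmless change of $v_{\max}$; the arithmetic closes as you say. But you correctly identify the crux as the \emph{seeding} step, and there your sketch does not work and is not what the paper does.

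Your proposed mechanism — comparing a fast walk with a slow walk to its right and extracting an intermediate ``forced-fast'' interval — is upside down. If $X^x$ averages $\ge v'$ and $X^y$ (with $y>x$) averages $\le v''<v'$, monotonicity only forces $y-x\ge (v'-v'')T$ and says nothing useful about speeds of walks started in $[x,y]$ beyond what follows from the two bounding trajectories; there is no ``whole sub-interval forced to stay fast'', and a pair-of-boxes renormalization built on that premise has no decay mechanism. The paper's seed is a different object entirely. It introduces \emph{trapped points}: $w$ is $K$-trapped if some $y\in w+[\delta K,2\delta K]\times\{0\}$ has $X^y_K-\pi_1(y)\le(v_-+\delta)K$, and Lemma~\ref{l:some_trapped} shows this has probability $\ge\uc{c:some_trapped}>0$ uniformly — this is exactly where the hypothesis $v_->-\infty$ (really, the definition of $v_-$ via $\tilde p_K$) enters. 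It then introduces \emph{threatened points}: $w$ is $(K,r)$-threatened if some $w+jK(v_+,1)$, $j<r$, is $K$-trapped; Lemma~\ref{l:threatened} bootstraps the positive trap probability along the $(v_+,1)$ line, using only the lateral decoupling (Proposition~\ref{p:decouple}, applicable since the separation between $w+jK(v_+,1)$ and $w+j'K(v_+,1)$ scales like $v_+K|j-j'|$, large compared with $\big((j'-j)K\big)^{3/4}$ because $v_+>0$), to obtain $\mathbb{P}[\text{not }(K,r)\text{-threatened}]\le\uc{c:threatened}r^{-1000}$. Lemma~\ref{l:delays2} then gives the deterministic dichotomy: a walker started near a threatened point either exceeds speed $v_++\delta/(2r)$ on some sub-interval of length $K$ — ruled out with high probability by Lemma~\ref{l:deviation_interp} — or is delayed to average speed $\le v_+-\delta/(2r)$ over time $rK$. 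Combining these yields Lemma~\ref{l:vdelay0} and Corollary~\ref{l:vdelay}, which is precisely the base case you need. So the crucial idea you are missing is to look \emph{forward in time along the slope $v_+$ for a slow patch}, not sideways for fast/slow pairs, and to exploit that (i) slow patches occur with probability bounded below by the definition of $v_-$, and (ii) $v_+>0$ makes the forward line ``spacelike'' enough for lateral decoupling to apply.
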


\begin{proof}[Proof of Theorem \ref{l:v_+=v_-}]
This proposition implies that there exists $\epsilon>0$ independent of $H$ such that $\liminf_{H\to\infty} p_H(v-\epsilon)=0$, which contradicts the definition of $v_+$. Therefore, this proves by contradiction that $v_+=v_-$ and thus  Theorem \ref{l:v_+=v_-}.
\end{proof}

\subsection{Trapped points}

The first step of the proof is to introduce the notion of \textit{traps}.
Intuitively speaking, by the definitions of $v_-$ and $v_+$, we know that the random walker has a reasonable probability of attaining speeds close to both of these values.
However, every time the random walker reaches a speed close to $v_-$ it will make it harder to get close to $v_+$.
Specially since it is very unlikely that it will run much faster than $v_+$ at any moment.
This motivates the definition below.

\begin{definition}
  Given $K \geq 1$ and $\delta$ as in \eqref{e:delta}, we say that a point $w \in \mathbb{R}^2$ is $K$-trapped if there exists some $y \in \big(w + [\delta K, 2 \delta K] \times \{0\} \big) \cap \mathbb{L}$ such that
  \begin{equation}
  \label{e:trapped_point}
    X^y_K - \pi_1(y) \leq (v_- + \delta)K.
  \end{equation}
  Note that this definition applies to points $w \in \mathbb{R}^2$ that do not necessarily belong to $\mathbb{L}$.
\end{definition}

As we mentioned above, the existence of a trap will introduce a delay for the random walker.
In fact, by monotonicity, if $w$ is $K$-trapped, then for every $w' \in \big( w + [0, \delta K] \times \{0\} \big) $, we have
\begin{equation}
  \label{e:trapped_slow}
  X^{w'}_K - \pi_1(w') \leq X^y_K - \pi_1(y) + 2 \delta K \leq (v_- + 3 \delta) K = (v_+ - \delta) K,
\end{equation}
where $y$ is any point in $\big(w + [\delta K, 2 \delta K] \times \{0\} \big)$ satisfying \eqref{e:trapped_point}.

Our next step is to show that the probability to find a trap in uniformly bounded away from zero.

\nc{c:some_trapped}
\nc{c:H_lower}
\begin{lemma}
  \label{l:some_trapped}
  There exist constants $\uc{c:some_trapped}(v_+,v_-) > 0$ and $\uc{c:H_lower} (v_+,v_-)> 4/\delta$, such that
  \begin{equation}
    \label{e:some_trapped}
    \inf_{K \geq \uc{c:H_lower}} \;\; \inf_{w \in \mathbb{R}^2 } \mathbb{P} \big[ \text{$w$ is $K$-trapped} \big] \geq \uc{c:some_trapped}.
  \end{equation}
\end{lemma}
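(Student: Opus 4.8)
The plan is to produce a uniform lower bound on the probability that a given point $w$ is $K$-trapped by exhibiting a single ``witness'' point $y$ in the prescribed interval $\big(w+[\delta K,2\delta K]\times\{0\}\big)\cap\mathbb{L}$ whose walk stays slow, and then controlling the probability of this event from below using the definition of $v_-$ together with Lemma~\ref{l:deviation_interp}. First I would observe that, by Remark~\ref{r:shift}, it suffices to treat $w\in\mathcal{L}_1$, since shifting $w$ by a lattice vector in $\mathbb{L}_d$ leaves the law of the relevant collection of walks unchanged; combined with the Lipschitz bound \eqref{e:lipschitz}, all the events in play over a time window of length $K$ are supported in a box of side comparable to $K$ translated by $w$, so the infimum over $w\in\mathbb{R}^2$ reduces to an infimum over a compact set of translates.

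Next I would pick a point $y=y(w)$, say the unique point of $\big(w+[\delta K,2\delta K]\times\{0\}\big)\cap\mathbb{L}$ closest to $w+(\tfrac32\delta K,0)$ (so that $y$ is at space-distance $\Theta(\delta K)$ to the right of $\pi_1(w)$), and lower-bound $\mathbb{P}\big[X^y_K-\pi_1(y)\le (v_-+\delta)K\big]$. Since $v_-+\delta > v_- = v_-(\rho)$, the definition \eqref{e:v_-} of $v_-$ guarantees that $\liminf_{H\to\infty}\tilde p_H(v_-+\delta/2,\rho)=0$ is \emph{not} forced to hold — rather it is $v_-+\delta/2 < v_-$ that would make $\tilde p_H$ small, so I must instead argue the reverse direction: for $v' < v_-$ we have $\liminf_H \tilde p_H(v',\rho) > 0$ by definition of $v_-$ as a supremum, hence along some subsequence $H_i$ the probability that \emph{some} starting point in an interval of length $H_i$ produces a slow walk is bounded below. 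The cleaner route is to go through $v_-+\delta$ directly: by definition of $v_-$ (as the supremum of speeds $v$ with $\liminf \tilde p_H(v)=0$) and since $v_-+\delta > v_-$, we do \emph{not} have $\liminf_H \tilde p_H(v_-+\delta) = 0$; that is, $\tilde p_H(v_-+\delta,\rho)$ is bounded away from $0$ along every subsequence — in fact $\liminf_H \tilde p_H(v_-+\delta,\rho)=:2c>0$. Thus for all large $H$ there is a point $z\in[0,H)\times\{0\}$ with $\mathbb{P}[X^z_H - \pi_1(z)\le (v_-+\delta)H]\ge$ something; but I need the slow point to lie in a \emph{prescribed} sub-interval, namely $[\delta K,2\delta K]\times\{0\}$.

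To localize the slow starting point I would use a two-sided comparison. Run the scale at $K$; the event ``$\tilde A_{K,w'}(v_-+\delta)$ occurs for $w' = w + (0,0)$'' says some $y'\in w+[0,K)\times\{0\}$ is slow. I want to upgrade ``some $y'$ in a length-$K$ interval'' to ``some $y$ in the length-$\delta K$ interval $[\delta K,2\delta K]$''. This is where monotonicity (Proposition~\ref{p:monotone}) enters: if \emph{any} $y'$ to the left of $w+(2\delta K,0)$ is slow, i.e. $X^{y'}_K - \pi_1(y')\le (v_-+\delta)K$, then by coalescence/ordering the walk from the point $y\in[\delta K,2\delta K]$ lying just to its right is squeezed, $X^{y}_K \le X^{y'}_K + (2\delta K) \le \pi_1(y') + (v_-+\delta+2\delta)K$, which is too weak by the extra $2\delta K$; so instead I would cover the interval $w+[0,2\delta K]\times\{0\}$ by $O(1/\delta)$ sub-intervals of length $\delta K$, apply the lower bound $\tilde p_{cK}(v_-+\tfrac\delta2) $-type estimate on each, and note that on the complementary event (no slow point in \emph{any} of these sub-intervals, in particular none in $[\delta K,2\delta K]$) together with the complement of the fast event $A_{cK,\cdot}((v_+\vee 0)+\delta)$ — which has probability $\le \uc{c:deviation}\exp(-2\ln^{3/2}(cK))\to 0$ by Lemma~\ref{l:deviation_interp} — one derives a contradiction with $v_+ > v_-$ for $K$ large. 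Concretely: if \emph{no} point in $w+[0,2\delta K]\times\{0\}$ is slow, then \emph{every} such point moves at speed $\ge v_-+\delta$ over $[0,K]$; combined with the (overwhelmingly likely) statement that no point moves faster than $(v_+\vee0)+\delta$, one can run the renormalization-free averaging argument of the kind used in \eqref{e:inter1} to conclude the walk from $w$ itself moves at speed in a narrow band, contradicting the definition of $v_-$ which forces $\tilde p_H(v_-+\delta)\not\to 0$. Extracting a quantitative constant $\uc{c:some_trapped}$ then amounts to: fix $K_0=\uc{c:H_lower}$ large enough that Lemma~\ref{l:deviation_interp} gives the fast event probability $\le 1/2$ for all $K\ge K_0$, use $\liminf \tilde p_H(v_-+\delta)\ge 2c>0$ to get, for each $K\ge K_0$ (after possibly passing through the nearest scale and using the Lipschitz bound to change $H$ to $K$), a sub-interval of length $\delta K$ containing a slow starting point with probability $\ge c/(2\cdot\lceil 2/\delta\rceil)=:\uc{c:some_trapped}$; finally translate invariance (Remark~\ref{r:shift}) makes this uniform in $w$.

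The main obstacle I anticipate is precisely this localization step: the definitions of $v_\pm$ and the decoupling give control over events on \emph{space} intervals of length comparable to the time scale, not on the short interval $[\delta K,2\delta K]$, and the naive monotonicity squeeze loses exactly the $2\delta K$ one cannot afford. The resolution should be to not insist on a single witness but to average: show that the \emph{expected fraction} of starting points in $w+[0,2\delta K]\times\{0\}$ that are slow is bounded below (this follows from $\tilde p$ being bounded below and a Fubini/translation argument over the $O(1/\delta)$ shifts), hence with probability bounded below at least one of the $O(1/\delta)$ candidate sub-intervals — in particular, after a further translation argument, the designated one $[\delta K,2\delta K]$ — contains a slow point. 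One must be slightly careful that ``slow point in $[\delta K,2\delta K]$'' as required by the definition of $K$-trapped is genuinely what comes out, rather than ``slow point somewhere''; using stationarity under the $\mathbb{L}_d$-shifts and choosing the witness interval position appropriately handles this, at the cost of shrinking the constant by the number of shifts, which is $O(1/\delta)$ and hence harmless.
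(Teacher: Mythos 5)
Your opening move is the right one and matches the paper: since $v_-+\delta > v_-$, the definition \eqref{e:v_-} of $v_-$ as a supremum forces $\liminf_{K\to\infty}\tilde p_K(v_-+\delta,\rho)>0$, and you then localize the slow starting point by covering the length-$K$ interval by $O(1/\delta)$ translates of a length-$\delta K$ interval, paying a $\lceil 2/\delta\rceil$ factor in the constant. This is essentially how the paper defines $\uc{c:some_trapped}$.

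However, there is a genuine gap, and you almost name it yourself but then wave it away. The quantity $\tilde p_K(v_-+\delta)=\sup_{w}\mathbb{P}[\tilde A_{K,w}(v_-+\delta)]$ is a \emph{supremum} over anchoring points $w$, whereas Lemma~\ref{l:some_trapped} asserts a lower bound on the \emph{infimum} over $w\in\mathbb{R}^2$. Remark~\ref{r:shift} only identifies the law under shifts by $\mathbb{L}_d$, so both sup and inf reduce to the fundamental domain $\mathcal{L}_1$ --- but over $\mathcal{L}_1$ the probability can genuinely vary, and ``translate invariance makes this uniform in $w$'' is false as stated. The paper closes this by a short monotonicity argument: it shows (using Proposition~\ref{p:monotone} together with the Lipschitz bound \eqref{e:lipschitz}) that if some $z\in\mathcal{L}_1$ has a witness $z'\in z+[\delta K+4,2\delta K-4]\times\{0\}$ with $X^{z'}_K-\pi_1(z')\le (v_-+\delta)K-4$, then for \emph{every} $y\in\mathcal{L}_1$ there is a witness $y'\in y+[\delta K,2\delta K]\times\{0\}$ with $X^{y'}_K-\pi_1(y')\le(v_-+\delta)K$. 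The $\pm 4$ and the slight shrinkage $\tilde K = K - 8/\delta$ absorb the bounded error of comparing trajectories started from nearby points at slightly different times. Without this step you only control the best anchoring point, not the worst one, and \eqref{e:some_trapped} does not follow.

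Finally, the contradiction route you sketch through Lemma~\ref{l:deviation_interp} (``no slow point, no fast point, hence the walk from $w$ moves in a narrow band, contradicting $\tilde p_H(v_-+\delta)\not\to 0$'') does not close: $\tilde p_H(v_-+\delta)$ being bounded below is a statement about the existence of \emph{some} favourable anchoring $w$ and \emph{some} slow starting point near it, and controlling a single trajectory in a band does not contradict this. Lemma~\ref{l:deviation_interp} is not needed here at all; the direct argument via the covering count and Proposition~\ref{p:monotone} suffices and is what the paper uses.
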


\begin{proof}
Since $v_-+\delta>v_-$, the definition of $v_-$ implies the positivity of the following constant:
\begin{equation}
\uc{c:some_trapped} := \frac{1}{2} \Big\lceil \frac{2}{\delta} \Big\rceil^{-1} \liminf_{K\to\infty} \tilde{p}_{K}(v_-+\delta)>0.
\end{equation}
In particular, there exists $\uc{c:H_lower}>8/\delta$ such that
\begin{equation}
\Big\lceil \frac{2}{\delta} \Big\rceil^{-1} \inf_{K\geq \uc{c:H_lower}-8/\delta} \tilde{p}_K(v_-+\delta) \geq \uc{c:some_trapped}.
\end{equation}
If we had a supremum over $w\in\mathbb{R}^2$ in \eqref{e:some_trapped}, we would be done.
However, we have an infimum in \eqref{e:some_trapped}, so that the proof requires a few more steps.

Recall the definition of $\mathcal{L}_1$ in \eqref{e:losange}. Let us prove that if
\begin{equation}\label{e:claim1}
\begin{split}
&\text{there is }z\in\mathcal{L}_1\text{ and }z'\in\left(z+[\delta K+4,2\delta K-4]\times\{0\}\right)\\
&\text{ such that }X^{z'}_K-\pi_1(z')\le (v_-+\delta)K-4,
\end{split}
\end{equation}
then
\begin{equation}\label{e:claim2}
\begin{split}
&\text{for any }y\in\mathcal{L}_1\text{ there exists }y'\in\left(y+[\delta K,2\delta K]\right)\\
&\text{ such that }X^{y'}_K-\pi_1(y')\le (v_-+\delta)K.
\end{split}
\end{equation}
Assume that \eqref{e:claim1} holds and fix $y\in \mathcal{L}_1$. Assume first that $\pi_2(y)\le \pi_2(z)$. Define
$y'=(\pi_1(z')-(\pi_2(z)-\pi_2(y)),\pi_2(y))$. Thus $\pi_2(y')\le \pi_2(z)$ and $\pi_1(y')\le \pi_1(z')-(\pi_2(z')-\pi_2(y'))$. Hence, by Proposition \ref{p:monotone}, we have that, for $K\ge2$,
\begin{equation}
X^{y'}_K\le X^{z'}_{K-(\pi_2(z')-\pi_2(y'))}.
\end{equation}
By \eqref{e:lipschitz} and using that $|\pi_2(z')-\pi_2(y')|\le2$, we have that
\begin{equation}
\begin{split}
X^{y'}_K&\le \pi_1(z')+(\pi_2(z')-\pi_2(y'))+  (v_-+\delta)K-4\\
&\le \pi_1(y')+2(\pi_2(z)-\pi_2(y))+(v_-+\delta)K-4\\
&\le\pi_1(y')+(v_-+\delta)K.
\end{split}
\end{equation}
Moreover, as $z,y\in\mathcal{L}_1$ and $\pi_1(y')=\pi_1(z')-(\pi_2(z)-\pi_2(y))$, we have that $\delta K\le \pi_1(y')\le 2\delta K$.\\

If $\pi_2(y)\ge \pi_2(z)$, similar arguments hold by defining $y'=(\pi_1(z')-(\pi_2(y)-\pi_2(z)),\pi_2(y)$.\\

Then, for $K > \uc{c:H_lower}$, let $\tilde{K}=K-8/\delta$, so that we have, using translation invariance,
  \begin{equation*}
    \begin{split}
     \uc{c:some_trapped} & \leq
        \sup_{z \in \mathcal{L}_1}
      \mathbb{P} \Big[
      \begin{array}{c}
        \text{there exists $z' \in (z + [0, \tilde{K}) \times \{0\})$}\\
         \text{such that $X^{z'}_{K} - \pi_1(z') \leq (v_- + \delta) \tilde{K}$}
      \end{array}
      \Big]\\
      & \leq  \sup_{z \in \mathcal{L}_1}
      \mathbb{P} \Big[
      \begin{array}{c}
        \text{there exists $z' \in (z + [\delta\tilde{K}, 2\delta\tilde{K}) \times \{0\})$}\\
         \text{such that $X^y_{K} - \pi_1(z') \leq (v_- + \delta) \tilde{K}$}
      \end{array}
      \Big]\\
      & \leq \inf_{y \in \mathcal{L}_1}
      \mathbb{P} \Big[
      \begin{array}{c}
        \text{there exists $y' \in (y + [\delta K, 2\delta K) \times \{0\})$}\\
         \text{ such that $X^{y'}_{K} - \pi_1(y') \leq (v_- + \delta) K$}
      \end{array}
      \Big].
    \end{split}
  \end{equation*}
  By Remark \ref{r:shift}, the infimum over $\mathcal{L}_1$ is equal to the infimum over $\mathbb{R}^2$, and we can thus conclude.
   \end{proof}

   Let us describe how we intend to employ the last lemma, which is widely inspired by that of \cite{BHT18}. The basic idea is that if a point is trapped, then a walk started from there is delayed. Then, one could argue that if a point is trapped with a probability that is high enough, then this would provide the picture of a super-critical percolation.
   In such a scenario, any random walker would have to be delayed on large distances and it would not be possible to remain close to $v_+$.

   Nevertheless, Lemma \ref{l:some_trapped} does not guarantee that the probability that a point is trapped is high. For this reason, we will use here the notion of {\it threatened points} introduced in \cite{BHT18}.
   Intuitively speaking, we will say that a point is threatened if there exists at least one trapped point lying along a line segment with slope $v_+$ starting from this point, see Definition \ref{d:threatened} and Figure~\ref{f:threatened}.\\
We will then prove two key results: a point is threatened with very high probability (see Lemma \ref{l:threatened}) and a random walk starting at a threatened point is delayed with very high probability (see Lemma \ref{l:delays2} and Lemma \ref{l:vdelay}).

\subsection{Threatened points}

\begin{definition}
  \label{d:threatened}
  Given $\delta$ as in \eqref{e:delta}, $K \geq 1$ and some integer $r \geq 1$, we say that a point $w \in \mathbb{L}$ is $(K, r)$-threatened if $w + j K (v_+, 1)$ is $K$-trapped for some $j = 0, \dots, r-1$.
\end{definition}

\begin{figure}
  \begin{center}
    \begin{tikzpicture}[use Hobby shortcut]
      \draw[dashed] (4.4, 0) -- (6.2, 3);
      \foreach \x in {1,...,5}
      { \draw[fill] (4.3 + \x / 3.3333, \x / 2) -- (4.5 + \x / 3.33, \x / 2);
      }
      \draw[fill, left] (5.3, 1.5) circle (.05) node {$\lfloor y \rfloor_{K} + j_o K(v_+, 1)$};
      \draw[dotted] (5.3, 1.5) -- (7, 1.5);
      \draw[thick] (5.6, 1.5) -- (5.9, 1.5);
      \draw[thick] (5.6, 1.45) -- (5.6, 1.55);
      \draw[thick] (5.9, 1.45) -- (5.9, 1.55);
      \draw[fill, left, right] (4.6, 0) circle (.05) node {$y = Y^y_0$};
      \draw[fill, left] (4.4, 0) circle (.05) node {$\lfloor y \rfloor_{K}$};
      \draw[fill, right] (6.2, 3) circle (.05) node {$\lfloor y \rfloor_{K} + rK (v_+, 1)$};
      \draw[fill, above left] (6, 3) circle (.05) node {$Y^y_{r K}$};
      \draw (5.7, 1.5) .. (5.72, 1.6) .. (5.6, 1.7) .. (5.4, 1.8) .. (5.35, 1.9) .. (5.3, 2);
      \draw[fill, below right] (5.72, 1.5) circle (.04) node {$y'$};
      \draw[fill, below right] (5.3, 2) circle (.04);
      \draw (4.6, 0) .. (4.5, .5) .. (5.1, 1) .. (5.4, 1.5) .. (5.2, 2) .. (5.5, 2.5) .. (6, 3);
    \end{tikzpicture}
    \caption{The point $\lfloor y \rfloor_{K}$ is $(K, r)$-threatened, since $\lfloor y \rfloor_{K} + j_o K (v_+, 1)$ is $K$-trapped. Picture taken from \cite{BHT18}.}
    \label{f:threatened}
  \end{center}
\end{figure}

As we are going to show below, a random walker starting on a threatened point will most likely suffer a delay, similarly to what we saw for trapped points.
See Figure \ref{f:threatened} for an illustration.

\begin{lemma}
  \label{l:delays2}
  For any positive integer $r $ and any real number $K\geq \uc{c:H_lower}$, if we start the walker at some $y \in \mathbb{L}$ and there exists $w\in(y+[-\delta K/4,0])$ such that
  \begin{equation}
 \text{$w$ is $(K, r)$-threatened},
  \end{equation}
  then either
  \begin{enumerate}
  \item ``the walker runs faster than $v_+$ for some time interval of length $K$'', that is,
    \begin{equation}
      \label{e:speedup}
      X_{(j + 1)K}^y - X_{jK}^y
      \geq \Big( v_+ + \frac{\delta}{2 r} \Big)K \quad \text{ for some $j = 0, \dots, r - 1$,}
    \end{equation}
  \item or else, ``it will be delayed'', that is,
    \begin{equation}
      \label{e:delay}
      X^y_{rK} - \pi_1(y)
      \leq \Big( v_+ - \frac{\delta}{2 r} \Big) r K.
    \end{equation}
  \end{enumerate}
\end{lemma}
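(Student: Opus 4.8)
The plan is to prove the dichotomy in Lemma~\ref{l:delays2} by a direct deterministic argument, splitting the trajectory $(X^y_t)_{t \in [0, rK]}$ into the $r$ consecutive time-blocks $[jK, (j+1)K]$ for $j = 0, \dots, r-1$ and showing that if \eqref{e:speedup} fails in every block then \eqref{e:delay} must hold. First I would set up notation: since $w$ is $(K,r)$-threatened there is some $j_o \in \{0, \dots, r-1\}$ such that $w + j_o K (v_+, 1)$ is $K$-trapped, and we know $w \in \big(y + [-\delta K/4, 0] \times \{0\}\big)$. The goal is to combine the delay that the trap forces on block $j_o$ with the assumption that no block over-performs, i.e. for every $j$ we have $X^y_{(j+1)K} - X^y_{jK} < (v_+ + \tfrac{\delta}{2r})K$.

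The key step is controlling block $j_o$. Let $w_o := w + j_o K(v_+, 1)$, which is $K$-trapped. By the Lipschitz property \eqref{e:lipschitz}, $X^y_{j_o K}$ lies in the interval $[\pi_1(y) - j_o K + \ldots]$; more precisely one checks that the spatial position $X^y_{j_o K}$ of the walker at time $j_o K$, relative to $\pi_1(w_o) = \pi_1(w) + j_o K v_+$, is at most $j_o K(\tfrac{\delta}{2r})$ above $\pi_1(w) + j_o K v_+$ by the no-speedup assumption summed over the first $j_o$ blocks, while being at least $\pi_1(y) - j_o K$ by \eqref{e:lipschitz}; and $\pi_1(y) \geq \pi_1(w) \geq \pi_1(y) - \delta K/4$. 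One then argues the walker at time $j_o K$ sits at a point $w' \in \big(w_o + [0, \delta K] \times \{0\}\big)$ (using $\delta K/4$ slack plus the $\tfrac{\delta}{2r} j_o K \le \delta K/2$ drift bound and choosing $r$-independent constants appropriately — this is where one needs $\delta/(2r)$ small, i.e. the $1/(2r)$ factor is there precisely so the accumulated over-speed across $j_o \le r$ blocks stays $\le \delta K / 2$). Then by the monotonicity consequence \eqref{e:trapped_slow} of being $K$-trapped, we get $X^{w'}_K - \pi_1(w') \le (v_+ - \delta)K$, hence by coalescence $X^y_{(j_o+1)K} - X^y_{j_o K} \le (v_+ - \delta)K + (\text{a small correction of size} \le \delta K/2)$, so block $j_o$ under-performs by at least roughly $\tfrac{\delta}{2} K$ relative to $v_+ K$.

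Finally I would sum over all blocks: the $r-1$ blocks other than $j_o$ each contribute at most $(v_+ + \tfrac{\delta}{2r})K$ to the displacement, and block $j_o$ contributes at most $(v_+ - \tfrac{\delta}{2})K$ (after absorbing the corrections). Therefore
\[
X^y_{rK} - \pi_1(y) \le (r-1)\Big(v_+ + \frac{\delta}{2r}\Big)K + \Big(v_+ - \frac{\delta}{2}\Big)K \le r v_+ K + \frac{\delta}{2}K - \frac{\delta}{2}K = r v_+ K - \frac{\delta}{2}K,
\]
which is stronger than \eqref{e:delay} since $\tfrac{\delta}{2} \ge \tfrac{\delta}{2r}$; a cleaner bookkeeping with the exact constants yields exactly $\big(v_+ - \tfrac{\delta}{2r}\big)rK$. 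The main obstacle I anticipate is purely the careful bookkeeping of the order-$\delta K$ error terms: one must verify that the walker's position at time $j_o K$ genuinely lands in the window $\big(w_o + [0, \delta K] \times \{0\}\big)$ where the trapping-induced slowdown \eqref{e:trapped_slow} applies, which requires that the $\delta K/4$ horizontal offset of $w$ from $y$, the $\pm 2$ lattice-rounding errors coming from $\mathbb{L}$ versus $\mathbb{R}^2$, and the accumulated excess speed $\sum_{j < j_o}(X^y_{(j+1)K} - X^y_{jK} - v_+ K) \le j_o \tfrac{\delta}{2r}K \le \tfrac{\delta}{2}K$ all fit together with room to spare — this is why the hypothesis constrains $w$ to within $\delta K/4$ of $y$ and why $K \ge \uc{c:H_lower} > 4/\delta$ is imposed, ensuring $\delta K$ dominates the additive lattice errors.
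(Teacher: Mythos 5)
Your overall strategy (decompose into $r$ time-blocks, use the trap at block $j_o$ for a delay, use the failure of \eqref{e:speedup} to control the other blocks) is exactly the paper's approach, but there is a genuine logical gap in the central claim: you assert that ``the walker at time $j_o K$ sits at a point $w' \in (w_o + [0, \delta K] \times \{0\})$'', and then apply \eqref{e:trapped_slow} to $w'$. The failure of \eqref{e:speedup} on the first $j_o$ blocks gives you only an \emph{upper} bound $X^y_{j_o K} \le \pi_1(y) + j_o(v_+ + \tfrac{\delta}{2r})K$; it gives no lower bound beyond the trivial Lipschitz bound $X^y_{j_o K} \ge \pi_1(y) - j_o K$, which sits far to the left of $\pi_1(w_o) = \pi_1(w) + j_o K v_+$. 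So the walker may well land to the left of the window, in which case the per-block increment $X^y_{(j_o+1)K} - X^y_{j_o K}$ is not controlled by \eqref{e:trapped_slow} at all (it could be close to $K$). Your planned ``bookkeeping'' concern is about fitting under the $\delta K$ upper edge of the window; you did not notice that the lower edge cannot be guaranteed.

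The paper sidesteps this entirely by never bounding the block-$j_o$ increment. It establishes only the upper bound $X^y_{j_o K} \le \pi_1(y')$ (where $y'$ is the trap witness sitting in $w_o + [\delta K, 2\delta K]\times\{0\}$, so you only need $j_o v_+ K + \tfrac{\delta}{2}K \le \pi_1(y') - \pi_1(y)$, which holds since $\pi_1(y') - \pi_1(y) \ge j_o v_+ K + \tfrac{3}{4}\delta K$) and then uses monotonicity to bound the \emph{cumulative} position at time $(j_o+1)K$ directly:
\begin{equation*}
  X^y_{(j_o+1)K} \le X^{y'}_K \le \pi_1(y') + (v_- + \delta)K \le \pi_1(y) + (j_o+1)v_+ K - \delta K.
\end{equation*}
This estimate absorbs both the over-speed of the first $j_o$ blocks and the slowdown from the trap in one stroke, and it is valid whether the walker is inside the window or far to its left. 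Summing the no-speedup bound over blocks $j_o+1, \dots, r-1$ then gives $X^y_{rK} - \pi_1(y) \le r v_+ K - \tfrac{\delta}{2}K$.

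Two further issues in your write-up: the ``small correction of size $\le \delta K/2$'' to block $j_o$ is spurious --- by coalescence, if $X^y_{j_o K}$ really sat at $\pi_1(w')$ then the block increment would be exactly $X^{w'}_K - \pi_1(w') \le (v_+ - \delta)K$, with no correction --- and if you keep that correction, the per-block tally only yields $rv_+ K - \tfrac{\delta}{2r}K$, which is strictly weaker than \eqref{e:delay}. Your final display also contains an algebra slip: $r v_+ K + \tfrac{\delta}{2}K - \tfrac{\delta}{2}K = r v_+ K$, not $r v_+ K - \tfrac{\delta}{2}K$. The correct calculation, with block $j_o$ contributing at most $(v_+ - \delta)K$, is $(r-1)(v_+ + \tfrac{\delta}{2r})K + (v_+ - \delta)K = r v_+ K - \tfrac{\delta}{2r}K - \tfrac{\delta}{2}K + \tfrac{\delta}{2}K = rv_+K - \tfrac{\delta}{2r}K$, which still falls short; only by bounding the cumulative position at time $(j_o+1)K$ as above does one reach the required $rv_+ K - \tfrac{\delta}{2}K$.
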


\begin{proof}
Fix $r \geq 1$ and $K\geq 1$.
Assume that the point $w\in(y+[-\delta K/4,0]\times\{0\})$ is $(K, r)$-threatened.
  Thus, for some $j_o \in \{ 0, \dots, r - 1\}$,  \begin{equation}
w + j_o K (v_+, 1) \text{ is $K$-trapped}
  \end{equation}
  or, in other words, there exists a point
  \begin{equation}
    \label{e:location_y'}
    y' \in \Big( \big( y + j_o K (v_+, 1) \big) + [3\delta K/4, 2 \delta K] \times \{ 0 \} \Big)
  \end{equation}
  such that
  \begin{equation}
    \label{e:delay_y'}
    X^{y'}_{ K} - \pi_1(y') \leq (v_- + \delta) K= (v_+ - 3 \delta) K.
  \end{equation}
  Fix such a point $y'$ and notice from \eqref{e:location_y'} that,
  \begin{equation}\label{e:location_y'_2}
    \frac{3}{4} \delta K \leq
    \pi_1(y') - \big( \pi_1(y) + j_o Kv_+ \big)
    \leq 2 \delta K.
  \end{equation}

We now assume that \eqref{e:speedup} does not hold and bound the horizontal displacement of the random walk in three steps: before time $j_o K$, between times $j_o K$ and $(j_o + 1) K$ and from time $(j_o + 1) K$ to time $r K$.

  \begin{equation*}
    \begin{split}
      X^y_{j_oK} - \pi_1(y)
      & \leq \sum_{j = 0}^{j_o - 1} X^y_{(j+1) K} - X^y_{j K}\\
      & \overset{\neg\eqref{e:speedup}}\leq j_o \Big( v_+ + \frac{\delta}{2 r} \Big)K
     \leq j_o v_+K + \frac{\delta}{2 } K\\
      & \leq \pi_1(y').
    \end{split}
  \end{equation*}
  So, by \eqref{e:location_y'_2}, $Y^y_{j_o K}$ lies to the left of $y'$ and, by monotonicity, \eqref{e:delay_y'} and \eqref{e:location_y'_2} we have that
  \begin{equation}
    \begin{split}
      X^y_{(j_o + 1) K} & \leq X^{y'}_{K} \leq \pi_1(y') + (v_+ - 3 \delta) K\\
      & \leq \pi_1(y) + j_o v_+ K + 2 \delta K+ (v_+ - 3 \delta) K\\
      & \leq \pi_1(y) + (j_o + 1) v_+ K - \delta K.
    \end{split}
  \end{equation}
  Now applying once more the assumption that \eqref{e:speedup} does not hold, for $j = j_o, \ldots, r-1$, we can bound the overall displacement of the random walk up to time $rK$:
  \begin{equation}
    \begin{split}
      X^y_{r K} - \pi_1(y)
      & \leq \big( X_{r K}^y - X_{(j_o + 1) K}^y \big) + \big( X_{(j_o + 1) K}^y - \pi_1(y) \big)\\
      & \leq (r - j_o - 1) \Big( v_+ + \frac{\delta}{2r} \Big) K + (j_o + 1) v_+ K - \delta K\\
      & \leq r v_+ K - \frac{\delta}{2} K = \Big( v_+ - \frac{\delta}{2 r} \Big) r K,
    \end{split}
  \end{equation}
  showing that \eqref{e:delay} holds and thus proving the result.
\end{proof}

We have seen that threatened points are introducing a delay to the walk, just like traps did.
However, the advantage of introducing the concept of threats is that they are much more likely to occur than traps as the next lemma shows.

\nc{csedar}
\nc{c:threatened}
\begin{lemma}[Threatened points]
  \label{l:threatened}
There exist $\uc{c:threatened} = \uc{c:threatened}(v_+,v_-,\uc{c:decouple})$ and $\uc{csedar}(v_+,v_-)$  such that,   for any $r \geq 1$ and for any $K\ge \uc{csedar}(v_+,v_-)$,
  \begin{equation}
  \sup_{y\in\mathbb{R}^2}  \mathbb{P} \big[ \text{$y$ is not $(K, r)$-threatened} \big] \leq \uc{c:threatened} r^{-1000}.
  \end{equation}
\end{lemma}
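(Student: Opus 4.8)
The starting point is Lemma~\ref{l:some_trapped}, which gives a uniform lower bound $\uc{c:some_trapped}$ on the probability that a fixed point $w$ is $K$-trapped, valid for all $K\ge\uc{c:H_lower}$. If trappedness of different points were independent, then along the line $\{y+jK(v_+,1)\colon j=0,\dots,r-1\}$ we would have at least one trapped point with probability at least $1-(1-\uc{c:some_trapped})^r$, which decays exponentially in $r$ and is certainly $\le \uc{c:threatened}r^{-1000}$ for large $K$ (and hence large $r$-range) after adjusting constants. The whole difficulty, therefore, is that these events are \emph{not} independent: the event ``$w+jK(v_+,1)$ is $K$-trapped'' is measurable with respect to the environment (and the walk randomness) in a space-time box of side $O(K)$ sitting around $w+jK(v_+,1)$, and consecutive such boxes overlap or are adjacent in time — exactly the regime where the lateral decoupling of Proposition~\ref{p:decouple} does \emph{not} apply, since the boxes are separated in time by $\sim K$ but only by $\sim v_+K$ in space, and $v_+ K \not\gg (K)^{3/4}$ in general.

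The way around this is the standard renormalization trick used for such ``threatened point'' statements (as in \cite{BHT18}): instead of asking for a trap among $r$ \emph{consecutive} boxes along the slope-$v_+$ line, we spread the $r$ candidate trap locations out in \emph{space} so that the decoupling inequality can be iterated. Concretely, I would not look at $w+jK(v_+,1)$ for $j=0,\dots,r-1$ directly but rather pick a sparse sub-collection of these indices — say indices $j$ that are multiples of some gap $g=g(K)$ chosen so that two trap-boxes at vertical separation $gK$ are at horizontal separation $\ge (gK)^{3/4}$ (any $g$ with $gK \gtrsim (rK)^{3/4}$, e.g.\ $g$ polynomial in $r$, works, using $v_+>0$ here in an essential way so that the horizontal displacement along the line grows linearly). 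Wait — one must be careful: the boxes lie along a line of slope $v_+$, so their horizontal centers \emph{do} separate linearly; it is the comparison of this linear separation to the $3/4$-power of the vertical separation that we need, and this holds once $K$ (equivalently $r$, through the range we care about) is large. Having chosen such a sparse sub-collection of size $\asymp r/g$, the events ``box is trapped'' for these indices can be decoupled one at a time via Proposition~\ref{p:decouple}: reveal them left to right, and at each step pay an error $\uc{c:decouple}e^{-(\text{sep})^{1/4}}$, which is super-polynomially small in $r$, while the ``main term'' at each step is bounded below by $\uc{c:some_trapped}$ from Lemma~\ref{l:some_trapped}. This yields
\[
\mathbb{P}\big[y \text{ not } (K,r)\text{-threatened}\big] \le (1-\uc{c:some_trapped}/2)^{\,\lfloor r/g\rfloor} + (\text{number of steps})\cdot\uc{c:decouple}e^{-c(gK)^{1/4}},
\]
and choosing $g$ a small power of $r$ (and $K\ge\uc{csedar}$ large enough that $(gK)^{1/4}\ge C\ln r$) makes the right-hand side $\le \uc{c:threatened}r^{-1000}$. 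Finally, the supremum over $y\in\mathbb{R}^2$ reduces to a supremum over $y\in\mathcal L_1$ by Remark~\ref{r:shift}, and Lemma~\ref{l:some_trapped}'s bound is already uniform over starting points, so the constants $\uc{c:threatened}$ and $\uc{csedar}$ can be fixed depending only on $v_+,v_-$ (and $\uc{c:decouple}$, itself a function of $\rho$).

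There is one technical subtlety to handle carefully in the write-up: the trap event at a point $w$ refers not only to the environment but also to the coupled random-walk variables $(U_w)$ in a box around $w$; one must check that ``$w$ is $K$-trapped'' is indeed a function supported (in the sense of Proposition~\ref{p:decouple}, suitably extended to include the walk's uniform variables) on a space-time box of side $O(K)$ — this is immediate from the Lipschitz bound \eqref{e:lipschitz}, since a walk started in $w+[\delta K,2\delta K]\times\{0\}$ stays within horizontal distance $K$ of its start up to time $K$. Then the decoupling inequality, which is stated for environment-measurable functions, extends to these functions because the walk's uniform variables $(U_w)$ are i.i.d.\ and independent of the environment, so conditioning on them and applying Proposition~\ref{p:decouple} to the resulting environment functions gives the same bound after integrating out. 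I expect \textbf{the main obstacle} to be precisely the bookkeeping in choosing the sparsification gap $g=g(r,K)$: it must simultaneously be large enough that the $3/4$-power decoupling condition $d_s\ge(\text{vertical sep})^{3/4}$ is met between the selected boxes (which forces $gK$ to dominate a power of $rK$), small enough that $r/g$ is still a large power of $r$ so the geometric factor $(1-\uc{c:some_trapped}/2)^{r/g}$ beats $r^{-1000}$, and the threshold $\uc{csedar}$ on $K$ must then be set so that both the polynomial range and the $e^{-c(gK)^{1/4}}$ error are under control uniformly in $r\ge1$; getting a single clean choice that works for all $r\ge1$ (including small $r$, where one simply absorbs everything into $\uc{c:threatened}$) requires some care but no new ideas.
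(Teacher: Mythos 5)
Your proposal is correct, but it takes a noticeably different route from the paper's. You use a one-shot sparsification: keep only candidate indices spaced by a gap $g$ chosen so that $gKv_+\gtrsim (rK)^{3/4}$ (roughly $g\asymp r^{3/4}$), then decouple the retained trap boxes sequentially, bounding the probability that none is trapped by $(1-\uc{c:some_trapped})^{r/g}$ plus an accumulated decoupling error. The paper instead runs a triadic renormalization scheme: it sets $q_j^K=\sup_y\mathbb{P}[y\text{ not }(K,3^j)\text{-threatened}]$, observes that not being $3^{j+1}$-threatened forces two not-$3^j$-threatened events in the first and last thirds of the line segment (the skipped middle third supplies the lateral separation $\sim v_+3^jK\gg(3^{j+1}K)^{3/4}$), applies Proposition~\ref{p:decouple} once per level to get the recursion $q_{j+1}^K\le(q_j^K)^2+\uc{c:decouple}e^{-(\cdot)^{1/4}}$, and then inducts to $q_{\uc{csedar2}+j}^K\le(1-\widetilde{\uc{c:some_trapped}})^{j^2}$. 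Both arguments rest on exactly the same two inputs — the uniform lower bound of Lemma~\ref{l:some_trapped} and the lateral decoupling of Proposition~\ref{p:decouple} — and both exploit $v_+>0$ so that candidate boxes along the slope-$v_+$ line separate horizontally at a linear rate. The trade-offs are as you anticipate: your approach gives a nominally stronger bound $\exp(-cr^{1/4})$ compared to the paper's $\exp(-c(\log r)^2)$, but at each decoupling step you pair a single $O(K)$-sized box against a much larger one, so you have to carry the worst-case separation condition $gKv_+\ge(rK)^{3/4}$ through all $m\sim r/g$ steps; the paper's recursion always decouples two boxes of comparable size (one scale apart) and so only ever has to check one clean inequality per level, which makes fixing a threshold $\uc{csedar}$ uniformly in $r$ cleaner. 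Your observation that the trap event depends on the walk's i.i.d.\ $(U_w)$ variables as well as the environment, and that this is handled by conditioning on $U$ before applying Proposition~\ref{p:decouple}, is a real (if implicit) point that the paper glosses over.
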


\begin{proof}
\nc{c:tilda_trap}
First, we prove a statement for $r=3^j$ for all integers $j \ge 3$.
Let us define
\begin{equation}
q_j^K=\sup_{y\in\mathbb{R}^2}\mathbb{P}\left[y \text{ is not }(K,3^j)\text{-threatened}\right].
\end{equation}
Note that if the event $\{y \text{ is not }(K,3^{j+1})\text{-threatened}\}$ holds for some $j\ge3$, then the two following events both hold:
\begin{equation}
\begin{split}
A_1&=\left\{y \text{ is not }(K,3^{j})\text{-threatened}\right\},\\
A_2&=\left\{y+(v_+,1)2\times3^j) \text{ is not }(K,2\times3^{j})\text{-threatened}\right\}.
\end{split}
\end{equation}
\todo{I disagree with this $2 \times$ here.}

\nck{ksedar}
These events are respectively supported on
\begin{equation}
\begin{split}
B_1&= y + \left[-K,v_+3^jK+(1+2\delta) K\right]\times\left[0,3^jK\right]\\
B_2&= y + \left[(2v_+3^j-1)K, (v_+3^{j+1}+1+2\delta) K\right] \times \left[2\times3^jK,3^{j+1}K\right].
\end{split}
\end{equation}
We want to be able to apply Proposition \ref{p:decouple}. For this purpose, we require that  $(v_+3^j-2-2\delta)K\ge (3^{j  + 1 }K)^{3/4}$ by choosing $ \uck{ksedar}(v_+)$ such that $v_+3^j-2-2\delta>v_+3^j/2$ for all $j\ge \uck{ksedar}$, and by choosing $\uc{csedar}(v_+,v_-)\ge \uc{c:H_lower}$ (defined  in Lemma \ref{l:some_trapped}) such that $v_+K^{1/4}\ge 2$ for all $K\ge\uc{csedar}(v_+)$.

\nc{csedar2}
Define the constant $\widetilde{\uc{c:some_trapped}}=1-(1-\uc{c:some_trapped})^{1/9}$, where $\uc{c:some_trapped}$ is defined  in Lemma \ref{l:some_trapped}. Let us fix $\uc{csedar2}(v_+,v_-,\uc{c:decouple})$ such that
\begin{equation}
\uc{c:decouple}\exp \Big\{-\left( (3^{\uc{csedar2}+j+1})^{3/4}-(j+1)^2\ln(1-\widetilde{\uc{c:some_trapped}}) \right) \Big\} \le\widetilde{\uc{c:some_trapped}}/2\text{, for all } j\ge3.
\end{equation}
Now, note that, for all $K\ge\uc{csedar}$, $q^K_{\uc{csedar2}+3}\le 1-\uc{c:some_trapped}\le (1-\widetilde{\uc{c:some_trapped}})^{3^2}$ by Lemma \ref{l:some_trapped}.
Assume that, for some $k\ge3$, $q^K_{\uc{csedar2}+j}\le (1-\widetilde{\uc{c:some_trapped}})^{j^2}$. Then, one has
\begin{equation}
\begin{split}
\frac{q^K_{\uc{csedar2}+j+1}}{(1-\widetilde{\uc{c:some_trapped}})^{(j+1)^2}}&\le \frac{\Big(q^K_{\uc{csedar2}+j}\Big)^{ 2 }}{(1-\widetilde{\uc{c:some_trapped}})^{(j+1)^2}}+\frac{\uc{c:decouple}e^{ -\left( 3^{\uc{csedar2}+j+1}K  \right)  }}{(1-\widetilde{\uc{c:some_trapped}})^{(j+1)^2}}\\
&\le (1-\widetilde{\uc{c:some_trapped}})^{2j^2-(j+1)^2}+\frac{\widetilde{\uc{c:some_trapped}}}{2}\le 1.
\end{split}
\end{equation}
This proves that, for any $j\ge3$, $q^K_{\uc{csedar2}+j}\le (1-\widetilde{\uc{c:some_trapped}})^{j^2}$, for all $K\ge\uc{csedar}$.
\nc{onemore}
There exists $\uc{onemore}$ such that  $ (1-\widetilde{\uc{c:some_trapped}})^{\uc{onemore}}\le 3^{-1000}$. Consequently, for any $j\ge \uc{onemore}$, $q^K_{\uc{csedar2}+j}\le 3^{-1000j}$.
Finally, let $r\ge 3^{\uc{csedar2}+\uc{onemore}}$ and let $\bar{j}$ be such that $3^{\bar{j}}\le r<3^{\bar{j}+1}$. Therefore, we have, for all $K\ge\uc{csedar}$,
\begin{equation}
\begin{split}
\sup_{y\in\mathbb{R}^2}\mathbb{P}\left[y \text{ is not }(K,r)\text{-threatened}\right]&\le \sup_{y\in\mathbb{R}^2}\mathbb{P}\left[y \text{ is not }(K,3^{\bar{j}})\text{-threatened}\right]\\
&\le (3^{\bar{j}})^{-1000}\le 3^{1000}r^{-1000}.
\end{split}
\end{equation}
By choosing
\begin{equation}
\uc{c:threatened}(v_+,v_-,\uc{c:decouple})=3^{1000(\uc{csedar2}+\uc{onemore})}
\end{equation}
the result follows.
\end{proof}

\subsection{Proof of Proposition \ref{prop:final_delay}}

We first prove the following lemma which already provides some delay compared to $v_+$. Nevertheless, this delay, when applied in Corollary \ref{l:vdelay}, is vanishing with time. Thus we will need to bootstrap this result in a following proposition, which will give a contradiction with the assumption $v_-<v_+$. Recall the definition of $A_{H,w}(v)$ in \eqref{e:A_m_v}.
\begin{lemma}\label{l:vdelay0}
For any $\epsilon>0$, there exist $r=r(\epsilon,v_+,v_-,\uc{c:decouple})\in\mathbb{R}^+$ and  $H_0=H_0(\epsilon, r, v_+,v_-)\in\mathbb{R}^+$ such that, for any $H\ge H_0$ and for any $w\in\mathbb{R}^2$,
\begin{equation}\label{e:vdelay}
\mathbb{P}\left[A_{H,w}\left(v_+-\frac{\delta}{2r}\right)\right]\le \epsilon.
\end{equation}
\end{lemma}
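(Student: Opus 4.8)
The plan is to combine the fact that threatened points are ubiquitous (Lemma \ref{l:threatened}) with the delay mechanism (Lemma \ref{l:delays2}) and the fast decay of speed-ups above $v_+$ (Lemma \ref{l:deviation_interp}). Fix $\epsilon>0$. First I would choose $r$ large enough that $\uc{c:threatened} \, r^{-1000} \le \epsilon/3$, where $\uc{c:threatened}$ comes from Lemma \ref{l:threatened}; this fixes the number of ``strips'' of height $K$ we look at along the slope-$v_+$ line. Having fixed $r$, the target speed becomes $v_+ - \delta/(2r)$, a fixed quantity strictly below $v_+$. The idea is that, for the event $A_{H,w}(v_+ - \delta/(2r))$ to fail to be controlled, there must be some starting point $y \in w + [0,H)\times\{0\}$ whose walk reaches average speed at least $v_+ - \delta/(2r)$ over time $H$; I will show this forces either a speed-up above $v_+$ somewhere along the way, or the absence of any threatened point near $y$ at an appropriate scale — both of which are rare.

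More precisely, given $H$, set $K = K(H)$ to be of order $H / (2r)$ — more carefully, let $n = \lfloor H/K \rfloor$ with $K$ chosen so that $n \ge r$, say $K = \lfloor H/(2r)\rfloor$ so that $n \ge 2r \ge r$ once $H \ge H_0$; the precise bookkeeping of the leftover time $H - nK \le K$, handled via the Lipschitz bound \eqref{e:lipschitz}, costs only $K \le H/(2r) \cdot(\text{const})$ in displacement, which I will absorb into the slack between $v_+ - \delta/(2r)$ and $v_+ - \delta/r$ by taking $r$ (equivalently the slack) appropriately, or by working with $v_+ - \delta/(4r)$ internally. Now fix a deterministic grid of starting points $y_1,\dots,y_N$ along $w + [0,H)\times\{0\}$ with spacing of order $\delta K/4$, so that $N \le C H/(\delta K) \le C' r/\delta$ is bounded in terms of $r$ and $\delta$ only; by monotonicity (Proposition \ref{p:monotone}) it suffices to control the walks from these grid points, since any $y$ in the interval lies within $\delta K/4$ of some $y_i$ to its left and the displacement of $X^y$ up to time $H$ is at most that of $X^{y_i}$ plus $\delta K /4 \le \delta H/(8r)$, again absorbed into slack. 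For each grid point $y_i$, apply Lemma \ref{l:delays2} with the chosen $K$ and $r$: on the event that the point $w_i := y_i$ itself (or a point within $\delta K/4$ to its left, but we can just translate the grid) is $(K,r)$-threatened, \emph{either} the walk runs faster than $v_+$ over some length-$K$ time interval among the first $r$ of them — call this event $F_i$ — \emph{or} $X^{y_i}_{rK} - \pi_1(y_i) \le (v_+ - \delta/(2r))rK$, i.e.\ over the time window $[0,rK]$ the walk from $y_i$ is already below the target average speed. Since $rK$ is comparable to $H$ and the leftover is controlled, the latter alternative rules $y_i$ out of $A_{H,w}$. Hence
\begin{equation}
\mathbb{P}\big[A_{H,w}(v_+ - \delta/(2r))\big] \le \sum_{i=1}^{N} \mathbb{P}[y_i \text{ not } (K,r)\text{-threatened}] + \sum_{i=1}^{N}\mathbb{P}[F_i] + (\text{slack terms}),
\end{equation}
and the slack terms vanish for $H$ large by the choices above.

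For the two sums: the first is at most $N \cdot \uc{c:threatened} r^{-1000} \le C'(r/\delta)\,\uc{c:threatened}\,r^{-1000}$, which I can make $\le \epsilon/3$ by choosing $r$ large enough at the very start (this is why the $r^{-1000}$ with a large exponent in Lemma \ref{l:threatened} matters: it beats the polynomial growth of $N$ in $r$). For the second, each $F_i$ is contained in the event $\bigcup_{j=0}^{r-1}\{X^{y_i}_{(j+1)K} - X^{y_i}_{jK} \ge (v_+ + \delta/(2r))K\}$, which by the coupling and a union bound over the (bounded number, $\le r$, of) time-shifts is bounded using $p_K$-type quantities at speed $v_+ + \delta/(2r) > v_+ \vee 0$; by Lemma \ref{l:deviation_interp} (applied with $\epsilon' = \delta/(2r)$, so with its constant $\uc{c:deviation}(\delta/(2r),\rho)$) each such probability is at most $\uc{c:deviation}\exp(-2\ln^{3/2}K)$, hence $\sum_i \mathbb{P}[F_i] \le N r\,\uc{c:deviation}\exp(-2\ln^{3/2}K) \to 0$ as $H \to \infty$ (with $r,\delta$ fixed), so it is $\le \epsilon/3$ for $H \ge H_0(\epsilon,r,v_+,v_-)$. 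Combining the three contributions gives the bound $\epsilon$, uniformly in $w$, which is the claim. The main obstacle I anticipate is the careful bookkeeping to ensure the ``leftover time'' and ``grid spacing'' slacks are genuinely smaller than the gap $\delta/(2r)$ between the nominal and achieved speeds — this is where one must be slightly generous (e.g.\ run the internal argument with $v_+ - \delta/(3r)$ or shrink $K$ relative to $H$) so that all the $O(K)$ error terms are provably absorbed; everything else is a union bound over a bounded (in $r$) number of events each controlled by a result already in hand.
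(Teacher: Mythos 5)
Your strategy matches the paper's almost exactly: fix a grid of $O(r/\delta)$ starting points, invoke Lemma~\ref{l:delays2} to split $A_{H,w}(v_+-\delta/(2r))$ into ``some grid point is not threatened'' plus ``some speed-up above $v_+$ over a length-$K$ window,'' bound the first with Lemma~\ref{l:threatened} and the second with Lemma~\ref{l:deviation_interp}, and choose $r$ first and then $H_0$. You also correctly spot that the polynomial growth of $N$ in $r$ is dominated by $r^{-1000}$.

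However, your choice of $K$ is wrong and the resulting ``leftover'' is not a small slack. You set $K=\lfloor H/(2r)\rfloor$, so that $rK\approx H/2$, and then apply Lemma~\ref{l:delays2} with this $K$ and the \emph{same} $r$. The conclusion of Lemma~\ref{l:delays2} then only controls $X^{y_i}_{rK}-\pi_1(y_i)$ over a window of length $rK\approx H/2$, leaving a residual window of length $H-rK\approx H/2$. Over that residual window Lipschitz gives displacement at most $H/2$, so the total can be as large as $\bigl((v_+-\delta/(2r))/2+1/2\bigr)H$, which exceeds $(v_+-\delta/(2r))H$ whenever $v_+<1$ — i.e.\ this is not an $O(K)$ error that can be absorbed into the gap $\delta/(2r)$, contrary to what you claim. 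The fix is simply to take $K=H/r$ (real-valued; no floor is needed since $K$ is a time length, not an index), so that $rK=H$ exactly and there is no residual window at all; this is what the paper does. With this change, the grid spacing $\delta K/4$ is also already built into the hypotheses of Lemma~\ref{l:delays2} (it requires only that some threatened point lie in $y+[-\delta K/4,0]$), so there is no separate ``grid slack'' to absorb either. With $K=H/r$, your two union bounds become the paper's estimates
\[
\mathbb{P}[E_1]\le \tfrac{6r}{\delta}\,\uc{c:threatened}\,r^{-1000},\qquad
\mathbb{P}[E_2]\le \tfrac{6r^2}{\delta}\,\uc{c:deviation}\,e^{-2\ln^{3/2}(H/r)},
\]
and the remainder of your argument (choose $r$ so the first is $\le\epsilon/2$, then $H_0$ so the second is $\le\epsilon/2$) goes through.
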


\begin{proof}
For $r\ge1$, $H\ge r\times\uc{csedar}$ and $w\in\mathbb{R}^2$, let us define $y_i=w+i(\delta H/4r,0)$ and the events
\begin{align}
E_1(H,r,w)&:=\left\{  \exists i\in\{0,\dots,\left\lceil\frac{4r}{\delta}\right\rceil-1\}:\ y_i\text{ is not }(\frac{H}{r},r)\text{-threatened}\right\}\\
E_2(H,r,w)&:=\left\{  \exists i\in\{0,\dots,\left\lceil\frac{4r}{\delta}\right\rceil-1\}, j\in\{0,\dots,r-1\}:\ X_{(j + 1)\frac{H}{r}}^{y_i} - X_{j\frac{H}{r}}^{y_i} \geq \left( v_+ + \frac{\delta}{2 r} \right)\frac{H}{r}\right\}.
\end{align}
Note that for any $y\in[w,w+H]$, there exists $i\in\{0,\dots,\left\lceil\frac{4r}{\delta}\right\rceil-1\}$ such that $y_i\in\left(y+\left[-\delta H/4r,0\right]\times\{0\}\right)$. Thus, by Lemma \ref{l:delays2}, we have that if $\left(E_1(H,r,y)\cup E_2(H,r,y)\right)^c$ holds then, for any $y\in[w,w+H]$, we have that
\begin{equation}\label{e:conc}
 X^y_{H} - \pi_1(y)
      \leq \Big( v_+ - \frac{\delta}{2 r} \Big) H.
\end{equation}
Therefore, we have that $A_{H,w}\left(v_+-\frac{\delta}{2r}\right)\subset E_1(H,r,y)\cup E_2(H,r,y)$, and thus
\begin{equation}\label{littleflea}
\mathbb{P}\left[ A_{H,w}\left(v_+-\frac{\delta}{2r}\right)\right]\le \mathbb{P}\left[ E_1(H,r,w)\right]+\mathbb{P}\left[ E_2(H,r,w)\right]
\end{equation}
Let us now apply Lemma \ref{l:threatened} with $K=\frac{H}{r}$. This yields that, for any $r\ge1$ and $H\ge r\times \uc{csedar}$,
  \begin{equation}\label{offthemap}
  \begin{split}
\mathbb{P}\left[ E_1(H,r,w)\right]&\le\frac{6r}{\delta} \sup_{y\in\mathbb{R}^2}  \mathbb{P} \big[ \text{$y$ is not $(\frac{H}{r}, r)$-threatened} \big] \\
&\leq \frac{6}{\delta}\uc{c:threatened}(v_+,v_-,\uc{c:decouple}) r^{-999}.
  \end{split}
  \end{equation}
  Besides, by Lemma \ref{l:deviation_interp}, we have that
    \begin{equation}\label{offthemap2}
  \begin{split}
\mathbb{P}\left[ E_2(H,r,w)\right]&\le\frac{6r^2}{\delta} p_{\frac{H}{r}}\left(v_++ \frac{\delta}{2 r}\right) \\
&\leq \frac{6r^2}{\delta}\uc{c:deviation}(v_+,v_-,r) e^{-2\left(\ln \frac{H}{r}\right)^{3/2}}.
  \end{split}
  \end{equation}
  
  Finally, notice that, for any $\epsilon>0$, we can choose $r=r(\epsilon,v_+,v_-,\uc{c:decouple})$ large enough so that the right-hand side of \eqref{offthemap} is smaller than $\epsilon/2$ for any $H\ge r\times \uc{csedar}$, and then choose $H_0=H_0(\epsilon, r, v_+,v_-)\ge r\times \uc{csedar}$ large enough so that the right-hand side of \eqref{offthemap2} is smaller than $\epsilon/2$ for any $H\ge H_0$. This concludes the proof using \eqref{littleflea}.
\end{proof}

\begin{corollary}\label{l:vdelay}
For all $k\ge0$, there exists $\uc{c:vdelay}(v_+,v_-,k)=\uc{c:vdelay}(k)\ge1$ such that, for any $h\ge \uc{c:vdelay}(k)$ and for any $w\in\mathbb{R}^2$
\begin{equation}\label{e:vdelay}
\mathbb{P}\left[A_{hL_k,w}\left(v_+-\frac{\delta}{\uc{c:vdelay}(k)}\right)\right]\le e^{-\left(\ln L_k\right)^{3/2}}.
\end{equation}
\end{corollary}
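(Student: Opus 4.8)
The plan is to obtain Corollary~\ref{l:vdelay} as a direct specialization of Lemma~\ref{l:vdelay0}, using that for a fixed scale index $k$ the length $L_k$ is merely a (large) constant, so that the dependence of the bound on $k$ can be absorbed into the constant $\uc{c:vdelay}(k)$. The one structural ingredient I would isolate first is the elementary monotonicity of the event $A_{H,w}(v)$ in the velocity parameter: directly from the definition \eqref{e:A_m_v}, if $v\le v'$ and $H>0$ then $A_{H,w}(v')\subseteq A_{H,w}(v)$, since the requirement $X^y_H-\pi_1(y)\ge v'H$ is more restrictive than $X^y_H-\pi_1(y)\ge vH$. This will let me pass from the velocity $v_+-\frac{\delta}{2r}$ furnished by Lemma~\ref{l:vdelay0} to the larger, hence more favorable, velocity $v_+-\frac{\delta}{\uc{c:vdelay}(k)}$ appearing in the corollary.

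Concretely, fix $k\ge0$ and set $\epsilon_k:=e^{-(\ln L_k)^{3/2}}\in(0,1)$. Applying Lemma~\ref{l:vdelay0} with this choice of $\epsilon$ produces $r_k=r(\epsilon_k,v_+,v_-,\uc{c:decouple})\ge1$ and $H_0^{(k)}=H_0(\epsilon_k,r_k,v_+,v_-)$ such that
\[
\mathbb{P}\left[A_{H,w}\left(v_+-\frac{\delta}{2r_k}\right)\right]\le\epsilon_k\qquad\text{for every }H\ge H_0^{(k)}\text{ and every }w\in\mathbb{R}^2 .
\]
I would then define
\[
\uc{c:vdelay}(k):=\max\left\{1,\ 2r_k,\ \frac{H_0^{(k)}}{L_k}\right\},
\]
which, tracking the dependencies, is a function of $k$, $v_+$, $v_-$ and $\uc{c:decouple}$ only, as required; in particular $\uc{c:vdelay}(k)\ge1$.

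It then remains to check the bound for $h\ge\uc{c:vdelay}(k)$ and $w\in\mathbb{R}^2$. First, $H:=hL_k\ge\uc{c:vdelay}(k)\,L_k\ge H_0^{(k)}$, so the displayed inequality applies with this $H$. Second, $\uc{c:vdelay}(k)\ge2r_k$ gives $\frac{\delta}{\uc{c:vdelay}(k)}\le\frac{\delta}{2r_k}$, hence $v_+-\frac{\delta}{\uc{c:vdelay}(k)}\ge v_+-\frac{\delta}{2r_k}$, and by the monotonicity noted above
\[
A_{hL_k,w}\left(v_+-\frac{\delta}{\uc{c:vdelay}(k)}\right)\subseteq A_{hL_k,w}\left(v_+-\frac{\delta}{2r_k}\right).
\]
Combining the two facts yields $\mathbb{P}\big[A_{hL_k,w}(v_+-\frac{\delta}{\uc{c:vdelay}(k)})\big]\le\epsilon_k=e^{-(\ln L_k)^{3/2}}$, which is the claim. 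There is no genuine analytic obstacle here — all the real work has already been done in Lemma~\ref{l:vdelay0} (and, beneath it, in Lemmas~\ref{l:delays2}, \ref{l:threatened} and \ref{l:deviation_interp}); the only point to be careful about is the bookkeeping of parameter dependencies, namely that $\epsilon$ in Lemma~\ref{l:vdelay0} may be taken as small as we wish provided the resulting $r$, $H_0$, and hence $\uc{c:vdelay}(k)$, are allowed to depend on $k$, which is precisely the form in which the corollary is phrased.
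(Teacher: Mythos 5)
Your proof is correct and follows essentially the same route as the paper: fix $k$, apply Lemma~\ref{l:vdelay0} with $\epsilon = e^{-(\ln L_k)^{3/2}}$ to obtain $r(k)$ and $H_0(k)$, set $\uc{c:vdelay}(k) = (H_0(k)/L_k) \vee 2r(k)$, and conclude via the monotonicity inclusion $A_{hL_k,w}(v_+ - \delta/\uc{c:vdelay}(k)) \subseteq A_{hL_k,w}(v_+ - \delta/(2r(k)))$. The paper leaves the monotonicity of $A_{H,w}(\cdot)$ in the velocity parameter implicit; your making it explicit is a small but harmless addition, and your extra ``$\max$ with $1$'' is redundant since $r(k)\ge 1$ already forces $\uc{c:vdelay}(k)\ge 2$.
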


\begin{proof}
Let us fix  $k\ge0$ and apply Lemma \ref{l:vdelay0} with $\epsilon=e^{-\left(\ln L_k\right)^{3/2}}$.\\
We obtain that there exist  $r(k)=r(k,v_+,v_-,\uc{c:decouple})\ge1$ and $H_0(k,r)=H_0(k, r, v_+,v_-)$ such that, for any $h\ge H_0(k,r)/L_k$, for any $w\in\mathbb{R}^2$,
\begin{equation}
\mathbb{P}\left[A_{hL_k,w}\left(v_+-\frac{\delta}{2r(k)}\right)\right]\le e^{-\left(\ln L_k\right)^{3/2}}.
\end{equation}

  Finally, define $\uc{c:vdelay}(k)=\left(H_0(k,r)/L_k\right)\vee (2(r(k))$, so that \eqref{e:vdelay} follows by noting that $A_{hL_k,w}\left(v_+-\frac{\delta}{\uc{c:vdelay}(k)}\right)\subset A_{hL_k,w}\left(v_+-\frac{\delta}{2r(k)}\right)$ 
\end{proof}

We can now prove Proposition \ref{prop:final_delay}, which completes the proof of Theorem \ref{l:v_+=v_-}.
\begin{proof}[Proof of Proposition \ref{prop:final_delay}]
We fix $\uck{kfinaldelay}$ such that
\begin{align}\label{1star}
&\sum_{k\ge \uck{kfinaldelay}}\frac{1}{\sqrt{\ell_k}}\le \frac{1}{4},\\ \label{2star}
&9\ell_k^4\left[2e^{ -\left(2-(5/4)^{3/2}\right)\left(\ln L_k\right)^{3/2}  }+\uc{c:decouple}e^{-L_k^{1/4}+(5/4)^{3/2}\ln L_k   }\right]\le 1, \ \forall k\ge \uck{kfinaldelay}.
\end{align}
Let us fix $\uc{minhfinal}(v_+,v_-, \uck{kfinaldelay})\ge \uc{c:vdelay}(\uck{kfinaldelay})$ such that
\begin{equation}\label{3star}
\uc{c:deviation}\left(\frac{\delta}{\uc{c:vdelay}(\uck{kfinaldelay})},\rho\right)e^{-2\left(\ln \uc{minhfinal}\right)^{3/2}}\le 1,
\end{equation}
where $\uc{c:deviation}$ is defined in Lemma \ref{l:deviation_interp}.\\
Now, we want to apply Proposition \ref{metalemma}, iteratively. For this purpose, let us define
\begin{align*}
v_{\max}=v_++\frac{\delta}{\uc{c:vdelay}(\uck{kfinaldelay})},\ v_{\uck{kfinaldelay}}=v_+-\frac{\delta}{\uc{c:vdelay}(\uck{kfinaldelay})},
\end{align*}
and for all $k\ge \uck{kfinaldelay}$,
\begin{equation}\label{defvkend}
v_{k+1}=v_k+\frac{2\delta}{\uc{c:vdelay}(\uck{kfinaldelay})\sqrt{\ell_k}}\ge v_k+\frac{v_{\max}-v_k}{\sqrt{\ell_k}}.
\end{equation}
Note that, by \eqref{1star},
\begin{equation}
v_\infty\le v_+-\frac{\delta}{2\uc{c:vdelay}(\uck{kfinaldelay})}.
\end{equation}
By Corollay \ref{l:vdelay}, we already have that, for any $h\ge \uc{minhfinal}$,
\begin{equation}
p^h_{\uck{kfinaldelay}}\left(v_{\uck{kfinaldelay}}\right)\le e^{-\left(\ln L_{\uck{kfinaldelay}}\right)^{3/2}}.
\end{equation}
Now, assume that, some $k\ge \uck{kfinaldelay}$, we have that for any $h\ge \uc{minhfinal}$ and any $m\in M^h_{k}$
\begin{equation}
p^h_k(v_k)\le e^{-\left(\ln L_k\right)^{3/2}},
\end{equation}
then, applying Proposition \ref{metalemma} with $v_{\min}=v_k$ and using \eqref{defvkend}, we obtain
\begin{equation}
\begin{split}
\frac{p_{hL_{k+1}}\left(v_{k+1}\right)}{e^{-\left(\ln L_{k+1}\right)^{3/2}}}\le& 9\ell_k^4e^{\left(\ln L_{k+1}\right)^{3/2}}\left(p_{hL_k}(v_{\max})+\left(p_{hL_k}(v_{\min})\right)^2+\uc{c:decouple}e^{-\left(hL_k\right)^{1/4}}\right)\\
\le& 9\ell_k^4\left[ \uc{c:deviation}\left(\frac{\delta}{\uc{c:vdelay}(\uck{kfinaldelay})},\rho\right)  e^{(5/4)^{3/2}\left(\ln L_{k}\right)^{3/2}-2\left(\ln hL_k\right)^{3/2}} \right.\\
&\left.+e^{-\left(2-(5/4)^{3/2}\right)\left(\ln L_{k}\right)^{3/2}}+\uc{c:decouple}e^{-L_k^{1/4}+(5/4)^{3/2}\ln L_k   }\right],
\end{split}
\end{equation}
where we used Lemma \ref{l:deviation_interp} for the second inequality.
Using that $\left(\ln hL_k\right)^{3/2}\ge\left(\ln h\right)^{3/2}+\left(\ln L_k\right)^{3/2}$ and \eqref{3star}, we have
\begin{equation}
\begin{split}
\frac{p_{hL_{k+1}}\left(v_{k+1}\right)}{e^{-\left(\ln L_{k+1}\right)^{3/2}}}
\le& 9\ell_k^4\left[ 2e^{-\left(2-(5/4)^{3/2}\right)\left(\ln L_{k}\right)^{3/2}}+\uc{c:decouple}e^{-L_k^{1/4}+(5/4)^{3/2}\ln L_k   }\right]\\
\le&1,
\end{split}
\end{equation}
where we used \eqref{2star}. Thus \eqref{makesgoodforyou} holds for any $k\ge \uck{kfinaldelay}$, which concludes the proof of Proposition \ref{prop:final_delay}.
\end{proof}

%%%%%%%%%%%%%%%%%%%%%%%%%%%%%%%%%%%%%%%%%%%%%%%%%%%%%%%%%%%

%%%%%%%%%%%%%%%%%%%%%%%%%%%%%%%%%%%%%%%%%%%%%%%%%%%%%%%%%%%

\section{Strong decay below $v(\rho)>0$ or above $v(\rho)<0$}

\subsection{Sprinkling decoupling}

In this section, we state another decoupling result, similar to the lateral decoupling of Proposition \ref{p:decouple}. Let us briefly explain the difference between Proposition \ref{p:decouple} and the proposition below.\\
In Proposition  \ref{p:decouple}, we are taking advantage of the fact that, for the events we are interested in, we can always look at space-time boxes which are well-separated in space, compared to their time distance. This is because we were studying willing to control events of the form {\it ``the walker goes faster than $v>0$''}. This would ensure that, if we have enough bad boxes in a row, the first one and the last one will have a large space distance.\\
In the following, we will want to control events such that to bad space-time boxes could be ``on the top of each other'', which means that the two boxes would be at the same space position (but at different times). Hence, Proposition \ref{p:decouple} cannot be applied.\\
In the following {\it sprinkling decoupling}, we are able to deal with this case; nevertheless, the cost for this is that we need increase the density $\rho$ when we go from one scale to a bigger scale. In particular, on the contrary of Proposition \ref{p:decouple}, this technique cannot be used to prove precise statement about a fixed density $\rho$ (but rather about any density arbitrarily close to $\rho$).\\

Before stating the sprinkling decoupling, we need some notation. We use the same scales and boxes that we defined in Section \ref{s:scales}. Recall that we have now prove  Theorem \ref{l:v_+=v_-} and that we have define in \eqref{e:defv} the quantity $v(\rho)=v_+(\rho)=v_-(\rho)$ and recall the definition \eqref{e:defrho+-} of $\rho_+$.\\
From now on, we fix $\epsilon>0$. Note that, by definition of $\rho_+$,
\begin{equation}
v(\rho_++\epsilon)>0.
\end{equation}
\nck{k:minrhok}
Let us define $\uck{k:minrhok}(\epsilon)$ such that
\begin{equation}
\sum_{k\ge \uck{k:minrhok}} \frac{1}{L_k^{1/16}}\le\frac{\epsilon}{4}.
\end{equation}
Let us define
\begin{equation}\label{e:defrhok}
\rho_{\uck{k:minrhok}}=\rho+\frac{5}{4}\epsilon\text{, and }\rho_{k+1}=\rho_k+\frac{1}{L_k^{1/16}}\text{ for all }k\ge\uck{k:minrhok}.
\end{equation}
In particular, one can see that $\rho_\infty=\lim_{k\to\infty}\rho_k\le \rho+7\epsilon/4<\rho+2\epsilon$.\\
In the following, we say that a function $f$ of the environment is non-increasing is $f(\eta^\rho)$ stochastically dominates $f(\eta^{\rho+\epsilon})$. When we will apply the following result, the assumption $p_{\bullet}\ge p_\circ$ will matter (but, if this condition was violated, we would simply run the same argument by flipping the integer line).

\begin{proposition}\label{p:sprinkling}
\nc{c:distsprinkling}   \nc{c:sprinkling}
Consider the environment with law $\mathbf{P}^\rho_{EP}$ or $\mathbf{P}^\rho_{RW},$ with densities $\rho\in(0,1)$. Let $h\ge1$, $k\ge \uck{k:minrhok}$,  $w_1, w_2\in\mathbb{R}^2$ and recall the definition \eqref{e:B_L_h_w} of $B^h_{L_k}(w_{\cdot})$.
There exist constant $\uc{c:distsprinkling}$ and $\uc{c:sprinkling}=\uc{c:sprinkling}(\rho_+,\epsilon)$ such that, if
\begin{equation}
\pi_2(w_1)-\pi_2(w_2)\ge \uc{c:distsprinkling} hL_k,
 \end{equation}
 then, for any non-increasing functions $f_1$ and $f_2$, with values in $[0,1]$, supported respectively on $B^h_{L_k}(w_{1})$ and $B^h_{L_k}(w_{2})$,  we have that
   \begin{equation}
     \label{e:decouple2}
    \mathbb{E}^{\rho_{k+1}}\left[f_1f_2\right] \leq \mathbb{E}^{\rho_{k}}\left[f_1\right]\mathbb{E}^{\rho_{k}}\left[f_2\right]+ \uc{c:sprinkling} e^{-\frac{1}{\uc{c:sprinkling}}\left(hL_k\right)^{1/8}}.
   \end{equation}
\end{proposition}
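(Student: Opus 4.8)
Since $\pi_2(w_1)-\pi_2(w_2)\ge \uc{c:distsprinkling}hL_k$, the box $B^h_{L_k}(w_1)$ sits above $B^h_{L_k}(w_2)$ in time; write $s_1:=\pi_2(w_2)+hL_k$ for the top time of the lower box and $\Delta:=\pi_2(w_1)-s_1\ge(\uc{c:distsprinkling}-1)hL_k$ for the time gap between the top of the lower box and the bottom of the upper one. Let $\mathcal F_{s_1}$ be the $\sigma$-algebra generated by the configuration and the clocks/increments up to time $s_1$, so that $f_2$ is $\mathcal F_{s_1}$-measurable while, conditionally on $\mathcal F_{s_1}$, the environment after $s_1$ is the particle system restarted from $\eta_{s_1}$ with fresh randomness. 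The plan is to exhibit an $\mathcal F_{s_1}$-measurable \emph{good event} $G$ with $\mathbb P^{\rho_{k+1}}(G^{\mathsf c})$ small and such that, on $G$,
\begin{equation*}
  \mathbb E^{\rho_{k+1}}\big[f_1\,\big|\,\mathcal F_{s_1}\big]\ \le\ \mathbb E^{\rho_k}[f_1]\ +\ \text{(small error)}.
\end{equation*}
Granting this, and using that $0\le f_2\le1$ together with the fact that $f_2$ non-increasing and $\rho_{k+1}\ge\rho_k$ give $\mathbb E^{\rho_{k+1}}[f_2]\le\mathbb E^{\rho_k}[f_2]$, one obtains $\mathbb E^{\rho_{k+1}}[f_1f_2]=\mathbb E^{\rho_{k+1}}\big[f_2\,\mathbb E^{\rho_{k+1}}[f_1\mid\mathcal F_{s_1}]\big]\le \mathbb E^{\rho_k}[f_1]\,\mathbb E^{\rho_k}[f_2]+\mathbb P^{\rho_{k+1}}(G^{\mathsf c})+(\text{small error})$, which is \eqref{e:decouple2} after adjusting $\uc{c:sprinkling}$.

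\textbf{The good event.} Fix the scales $\ell^\ast:=(hL_k)^{3/8}$ and $R:=(hL_k)^{5/8}$, and let $W'$ be the spatial window of $B^h_{L_k}(w_1)$ enlarged by $R$ on each side. Put $G:=G_{\mathrm{dens}}\cap G_{\mathrm{esc}}$, where $G_{\mathrm{dens}}$ is the event that every interval $J\subseteq W'$ with $|J|=\ell^\ast$ satisfies $\sum_{x\in J}\eta_{s_1}(x)\ge\big(\rho_k+\tfrac12(\rho_{k+1}-\rho_k)\big)|J|$ (for the PCRW, replace $\rho_k$ by $\lambda_k$), and $G_{\mathrm{esc}}$ is the event that no particle lying outside $W'$ at time $s_1$ visits the spatial window of $B^h_{L_k}(w_1)$ before time $\pi_2(w_1)+hL_k$. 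Under $\mathbb P^{\rho_{k+1}}$ the slice $\eta_{s_1}$ is a product measure of density $\rho_{k+1}$, so by Hoeffding's inequality and a union bound over the $O(hL_k)$ relevant intervals, $\mathbb P(G_{\mathrm{dens}}^{\mathsf c})$ is at most a polynomial in $hL_k$ times $\exp\big(-c(\rho_{k+1}-\rho_k)^2\ell^\ast\big)$; since $\rho_{k+1}-\rho_k=L_k^{-1/16}$ by \eqref{e:defrhok}, the exponent has order $h^{3/8}L_k^{1/4}$, which beats $(hL_k)^{1/8}$. For $G_{\mathrm{esc}}$ one argues exactly as in the proof of Proposition~\ref{p:decouple}: using the supremum bound $\mathbb P[\sup_{t\le(\uc{c:distsprinkling}+1)hL_k}|Z_t|\ge d]\le Ce^{-cd^2/(hL_k)}$ for a lazy random walk $Z$ and summing over $d\ge R$ and over the expected number of particles, $\mathbb P(G_{\mathrm{esc}}^{\mathsf c})\le C\sqrt{hL_k}\,e^{-cR^2/(hL_k)}=C\sqrt{hL_k}\,e^{-c(hL_k)^{1/4}}$. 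Hence $\mathbb P^{\rho_{k+1}}(G^{\mathsf c})\le\uc{c:sprinkling}e^{-(hL_k)^{1/8}/\uc{c:sprinkling}}$.

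\textbf{The core estimate (and the main obstacle).} It remains to show that, on $G$, the conditional law of $\big(\eta_t(x)\big)_{(x,t)\in B^h_{L_k}(w_1)}$ given $\mathcal F_{s_1}$ stochastically dominates the law of that same observable under a fresh copy of the environment at density $\rho_k$, up to total-variation error $e^{-(hL_k)^{1/8}/c}$. On $G_{\mathrm{esc}}$ the values of $\eta$ inside $B^h_{L_k}(w_1)$ are a function of $\eta|_{W'}$ at time $\pi_2(w_1)$ and of the post-$\pi_2(w_1)$ randomness, so it suffices to dominate $\eta_{\pi_2(w_1)}|_{W'}$ by a fresh density-$\rho_k$ configuration and then run it forward with the same clocks/increments (monotone coupling), which yields $\bar\eta_t(x)\le\eta_t(x)$ on $B^h_{L_k}(w_1)$ and hence, $f_1$ being non-increasing, $f_1(\eta)\le f_1(\bar\eta)$ with $\mathbb E[f_1(\bar\eta)\mid\mathcal F_{s_1}]=\mathbb E^{\rho_k}[f_1]$. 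For the PCRW this is essentially explicit: conditionally on $\mathcal F_{s_1}$ the particles sit at known positions $\{p_j\}=\eta_{s_1}$ and move by fresh independent lazy walks, so $\eta_{\pi_2(w_1)}(y)$ is a sum of independent Bernoulli variables with success probabilities $\le C\Delta^{-1/2}$ (local CLT) and total mean equal to the density of $\{p_j\}$ around $y$ averaged over scale $\sqrt\Delta\gg\ell^\ast$ — hence at least $\lambda_k+\tfrac12(\rho_{k+1}-\rho_k)$ on $G_{\mathrm{dens}}$, uniformly in $y$ across $W'$; a block decomposition at scale $\ell^\ast$ and large-deviation bounds for such Bernoulli sums (per-block rate of order $L_k^{-1/8}\ell^\ast$, far larger than $(hL_k)^{1/8}$, so a union bound over the $O(hL_k)$ blocks is affordable) produce a coupling of $\eta_{\pi_2(w_1)}|_{W'}$ dominating a product Poisson$(\lambda_k)$ field with the required error. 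For the SSEP the same scheme applies, but the analogous statement — that on $G_{\mathrm{dens}}$ the conditional law of $\eta_{\pi_2(w_1)}$, in windows of size $o(\sqrt\Delta)$, dominates a product Bernoulli$(\rho_k)$ measure — relies on the local relaxation (local equilibrium) of the exclusion process over the time $\Delta$, which is large compared with $(\ell^\ast)^2$; this is precisely where $\uc{c:distsprinkling}$ has to be taken a sufficiently large (universal) constant, and establishing this domination quantitatively is the technical heart of the proof. Assembling the three displayed bounds and absorbing polynomial prefactors and the finitely many small values of $hL_k$ into $\uc{c:sprinkling}$ completes the argument.
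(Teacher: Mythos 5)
The paper does not prove this proposition from scratch; it simply invokes external results. For the SSEP it cites Theorem 2.1.1 of the reference \verb|rangel|, and for the PCRW it reparametrizes from $\rho$ to $\lambda=-\ln(1-\rho)$ and then applies Theorem 3.4 of \verb|RWonRW2|, observing that $\lambda_{k+1}\ge\lambda_k\big(1+(hL_k/c)^{-1/16}\big)$ and that monotonicity lets one pass from the $\lambda$-parametrization back to $\rho_k$. Your proposal instead attempts to re-derive the sprinkling inequality by hand, and the skeleton you build (condition at the top of the lower box, isolate a good density/escape event, then locally dominate the future configuration by a fresh lower-density one) is indeed the flavor of argument that lies behind those cited theorems. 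So the route you take is genuinely different from the paper's, and understanding it is valuable; but as a proof it is not complete.

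Two concrete issues. First, a fixable structural slip: you declare $G$ to be $\mathcal F_{s_1}$-measurable, yet $G_{\mathrm{esc}}$ concerns trajectories after time $s_1$, so $G$ as defined is not in $\mathcal F_{s_1}$ and the displayed decomposition $\mathbb E^{\rho_{k+1}}[f_1f_2]=\mathbb E^{\rho_{k+1}}[f_2\,\mathbb E^{\rho_{k+1}}[f_1\mid\mathcal F_{s_1}]]$ cannot be split by $\mathbf 1_G$ as written; you need to keep $G_{\mathrm{dens}}\in\mathcal F_{s_1}$ separate and estimate $\mathbb P(G_{\mathrm{esc}}^{\mathsf c}\mid\mathcal F_{s_1})$ inside the conditional expectation. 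Second, and more seriously, the step you label ``the core estimate (and the main obstacle)'' is exactly the content you would need to prove, and you do not prove it: you need a \emph{joint} stochastic domination of $\eta_{\pi_2(w_1)}|_{W'}$ by a lower-density product (Bernoulli or Poisson) field, uniformly over a window of size comparable to $hL_k$, with an explicit quantitative TV error. Per-site or per-block marginal bounds via a local CLT do not by themselves yield a coupling of the whole field --- the site occupations are negatively correlated for the PCRW and interacting for the SSEP, and promoting local estimates to a joint domination is precisely the hard part. For the SSEP you acknowledge this openly (``establishing this domination quantitatively is the technical heart of the proof''), and your suggestion that it follows from local relaxation over a time $\Delta\gg(\ell^\ast)^2$ is heuristically reasonable but is the entire content of Theorem~2.1.1 of \verb|rangel|, not a consequence of standard facts. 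So your proposal is best read as an informed outline of why the cited theorems are true, not as an independent proof of the proposition; to match the paper's level of rigor you would either have to fill in the coupling in full or, as the paper does, simply invoke the external decoupling theorems.
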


\begin{proof}
When the law of the environment is $\mathbf{P}^\rho_{EP}$, this statement is a straightforward application of
Theorem 2.1.1 in \cite{rangel} and the case of non-increasing functions.\\
When the law of the environment is $\mathbf{P}^\rho_{RW}$, we need to provide some details in order to apply Theorem 3.4 of \cite{RWonRW2}. Note that, in our paper, we parametrized the Poisson cloud of random walks with a density $\rho\in(0,1)$. Nevertheless, at the initial time, each site contains a random number of particles which is distributed as Poisson of parameter $\lambda=-\ln(1-\rho)\in(0,\infty)$. In \cite{RWonRW2} they use densities on $(0,\infty)$, thus similar to what we denote here $\lambda$ (but denoted $\rho$ in \cite{RWonRW2}). Therefore, we should apply Corollary 3.1 of \cite{RWonRW} with $\rho=\lambda_k:=-\ln(1-\rho_k)$.
\nc{c:boundrhok}
If we let $\lambda_{k+1}=-\ln(1-\rho_{k+1})$, then by \eqref{e:defrhok}, their exists a constant $\uc{c:boundrhok}(\rho_+,\epsilon)$ such that
\begin{equation}
\lambda_{k+1}\ge\lambda_k\left(1+{\left(hL_k/\uc{c:boundrhok}\right)^{-1/16}}\right).
\end{equation}
As $f_1$ and $f_2$ are non-increasing, we have that
\begin{equation}
 \mathbb{E}^{\rho_{k+1}}\left[f_1f_2\right] \le  \mathbb{E}^{\lambda_k\left(1+{\left(hL_k/\uc{c:boundrhok}\right)^{-1/16}}\right)}\left[f_1f_2\right] \text{ and } \mathbb{E}^{\lambda_k\left(1+{\left(hL_k/\uc{c:boundrhok}\right)^{-1/16}}\right)}\left[f_1\right]\le\mathbb{E}^{\rho_{k}}\left[f_1\right].
\end{equation}
Now, a straightforward application of Theorem 3.4 of \cite{RWonRW2} provides the conclusion.
\end{proof}

%%%%%%%%%%%%%%%%%%%%%%%%%%%%%%%%%%%%%%%%%%%%%%%%%%%%%%%%%%%

\subsection{Ballisticity: proof of Proposition \ref{p:ballisticity}}

Again, we will only prove the first line of \eqref{e:ballisticity}, but the symmetric arguments holds for the second line.
As in Section \ref{s:decay_scale}, we need to define a sequence of velocities.
\nck{k:minkrho2}
First, fix $\uck{k:minkrho2}(\uc{c:boundrhok},\rho_+,\epsilon)\ge \uck{k:minrhok}$ such that
\begin{equation}
\sum_{k\ge \uck{k:minkrho2}}\frac{2(\uc{c:boundrhok}+1)}{\ell_k}\le \frac{v(\rho_++\epsilon)}{4}.
\end{equation}
Define
\begin{equation}
\tilde{v}_{\uck{k:minkrho2}}=\frac{7}{8}v(\rho_++\epsilon)\text{ and } \tilde{v}_{k+1}=v_k-\frac{2\uc{c:boundrhok}}{\ell_k}\text{ for all }k\ge\uck{k:minkrho2}.
\end{equation}
In particular, note that $\tilde{v}_{\uck{k:minkrho2}}<1$ and $\tilde{v}_\infty\ge \frac{5}{8}v(\rho_++\epsilon)$.
\begin{lemma}\label{l:determlemmasprink}
Let $h\ge1$, $k\ge \uck{k:minkrho2}$ and $m\in M^h_{k+1}$. If $\tilde{A}_m(v_{k+1})$ holds, then there exists $m_1=(h,k,w_1), m_2=(h,k,w_2)\in C_m$ such that $\pi_2(w_2)-\pi_2(w_1)\ge \uc{c:boundrhok} hL_k$ and both events $\tilde{A}_{m_1}(v_{k})$ and $\tilde{A}_{m_2}(v_{k})$ hold.
\end{lemma}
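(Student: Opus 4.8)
The statement is a deterministic (quenched-free) lemma asserting that if the walk, started somewhere on the bottom edge $I_m$ of the big box $B_m$ at scale $k+1$, achieves an average speed $\le \tilde v_{k+1}$ over time $hL_{k+1}$, then it must have achieved average speed $\le \tilde v_k$ over two sub-boxes at scale $k$ which are far apart \emph{in time}. This is the ``vertical'' analogue of Lemma \ref{metalemmadet}, and the proof strategy should mirror that of Lemma \ref{metalemmadet} very closely, with the roles of space and time interchanged. So the plan is: fix $y\in I_m$ with $X^y_{hL_{k+1}}-\pi_1(y)\le \tilde v_{k+1}hL_{k+1}$; decompose the time interval $[0,hL_{k+1}]$ into the $\ell_k$ consecutive slabs $[jhL_k,(j+1)hL_k]$, $j=0,\dots,\ell_k-1$; and argue that among these slabs there must be \emph{two} in which the walk is slow (speed $\le \tilde v_k$), and moreover that the first and last such slabs can be taken at time-distance $\ge \uc{c:boundrhok}hL_k$.

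\textbf{Key steps.} First, I would show that at least one slab is slow, in fact that ``many'' slabs are slow: if in every slab the walk had speed $> \tilde v_k$, then over $[0,hL_{k+1}]$ its average speed would exceed $\tilde v_k$; but by the definition $\tilde v_{k+1}=\tilde v_k - 2\uc{c:boundrhok}/\ell_k < \tilde v_k$ this would contradict slowness at scale $k+1$ — so this already gives \emph{one} slow slab. To get two slow slabs that are \emph{time-separated} by at least $\uc{c:boundrhok}hL_k$, i.e.\ separated by at least $\uc{c:boundrhok}$ slabs, I would run the same counting more carefully: let $j_0$ be the first slow slab and $j_1$ the last slow slab. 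If $j_1 - j_0 \ge \uc{c:boundrhok}$ we are done, taking $m_1,m_2\in C_m$ the scale-$k$ boxes containing $(X^y_{j_0hL_k},j_0hL_k)$ and $(X^y_{j_1hL_k},j_1hL_k)$ (these lie in $C_m$ because a walk from $I_m$ stays in $B_m$, using \eqref{e:lipschitz}). If instead $j_1-j_0<\uc{c:boundrhok}$, then all slow slabs lie in a block of at most $\uc{c:boundrhok}$ consecutive slabs, so at least $\ell_k - \uc{c:boundrhok}$ slabs are fast (speed $>\tilde v_k$), and on the remaining $\le\uc{c:boundrhok}$ slabs the speed is at worst $-1$ (actually at worst $\le \tilde v_k$ is all we need, but we can afford the trivial Lipschitz bound from \eqref{e:lipschitz}). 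Summing,
\begin{equation}
X^y_{hL_{k+1}}-\pi_1(y) > \tilde v_k\,(\ell_k-\uc{c:boundrhok})hL_k - \uc{c:boundrhok}hL_k \ge \Big(\tilde v_k - \frac{2\uc{c:boundrhok}}{\ell_k}\Big)hL_{k+1} = \tilde v_{k+1}hL_{k+1},
\end{equation}
using $\tilde v_k\le 1$ (which holds since $\tilde v_{\uck{k:minkrho2}}<1$ and the sequence is decreasing) to absorb $-\tilde v_k\uc{c:boundrhok} - \uc{c:boundrhok}\ge -2\uc{c:boundrhok}$, and $k\ge \uck{k:minkrho2}$ so $\ell_k$ is large enough for the inequality to make sense. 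This contradicts the assumed slowness at scale $k+1$, so the case $j_1-j_0<\uc{c:boundrhok}$ is impossible, and we always land in the time-separated case.

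\textbf{Main obstacle.} The routine bookkeeping — matching the exact constants $\uc{c:boundrhok}$ and the exact definition of $\tilde v_{k+1}$, and making sure the Lipschitz loss on the ``bad'' intermediate slabs is genuinely dominated — is the only delicate point, and it is precisely why the constant is $\uc{c:boundrhok}$ (the same one appearing in the sprinkling bound of Proposition \ref{p:sprinkling}) rather than something smaller: one needs the time-gap large enough that the subsequent application of Proposition \ref{p:sprinkling} (with $f_1,f_2$ the indicators of $\tilde A_{m_i}(\tilde v_k)$, which are non-increasing in the environment since lowering the density makes the walk slower under \eqref{e:probassump}) goes through. I would also double-check the orientation convention $\pi_2(w_2)-\pi_2(w_1)\ge \uc{c:boundrhok}hL_k$ matches the hypothesis direction in Proposition \ref{p:sprinkling} (the higher-time box on top), relabeling $m_1,m_2$ if necessary so that $m_2$ is the later one. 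No genuinely hard estimate is involved; the content is entirely combinatorial/deterministic, exactly parallel to Lemma \ref{metalemmadet} with space and time swapped.
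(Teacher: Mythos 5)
Your proof is correct and takes essentially the same route as the paper: you decompose $[0,hL_{k+1}]$ into $\ell_k$ time slabs of length $hL_k$ and argue by contradiction that fewer than $\uc{c:boundrhok}+1$ slow slabs would force $X^y_{hL_{k+1}}-\pi_1(y)>v_{k+1}hL_{k+1}$, using the Lipschitz bound $\ge -hL_k$ on the slow slabs, the bound $>v_khL_k$ on the fast ones, and $v_k\le 1$ to absorb the loss, exactly as in the paper. Framing the case split via the first and last slow indices $j_0,j_1$ rather than directly assuming at most $\uc{c:boundrhok}$ slow slabs is a cosmetic difference only; the constants and the final displacement inequality are identical.
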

\begin{proof}
As $\tilde{A}_m(v_{k+1})$ holds, there exits $y\in[0,hL_{k+1})\times\{0\}$ such that $X^y_{hL_{k+1}}-\pi_1(y)\le v_{k+1}hL_k$.\\
Recall that the set $C_m$ is such that, for any $i\in \{0,\dots, \ell_{k}-1\}$ there exists a unique $m'\in C_m$ such that $X^y_{ihL_k}\in I_{m'}$. Denote $M(X^y)$ these (random) indices. In order to conclude, it is enough to prove that there exists at least $\uc{c:boundrhok}+1$ indices $m'\in M(X^y)$ such that $\tilde{A}_{m'}(v_k)$ happens.\\
Assume that there is at most $\uc{c:boundrhok}$ indices $m'\in M(X^y)$ such that $\tilde{A}_{m'}(v_k)$ happens and let us find a contradiction. Note that, if $m'\in M(X^y)$, then $X^y_{ihL_k}\in I_{m'}$ for some $i\in \{0,\dots, \ell_{k}-1\}$ and
\begin{itemize}
\item if $m'\in\tilde{A}_{m'}(v_k)$, then  $X^y_{(i+1)hL_k}-X^y_{ihL_k}\ge-hL_k$ by \eqref{e:lipschitz};
\item if $m'\in\tilde{A}^{\text{c}}_{m'}(v_k)$, then  $X^y_{(i+1)hL_k}-X^y_{ihL_k}\ge-v_khL_k$.
\end{itemize}
Therefore, we have that
\begin{equation}
\begin{split}
X^y_{hL_{k+1}}-\pi_1(y)&\ge -\uc{c:boundrhok}hL_k+\left(\ell_k-\uc{c:boundrhok}\right)v_khL_k\\
&\ge hL_{k+1}\left(v_-\frac{\uc{c:boundrhok}(1+v_k)}{\ell_k}\right)\\
&>v_{k+1}hL_{k+1},
\end{split}
\end{equation}
which contradict that $X^y_{hL_{k+1}}-\pi_1(y)\le v_{k+1}hL_k$.
\end{proof}
\nck{k:lastone}
Let $\uck{k:lastone}(\uc{c:boundrhok},\rho_+,\epsilon)\ge\uck{k:minkrho2}$ be such that, for any $k\ge\uck{k:lastone}$, we have
\begin{equation}\label{e:lastone}
3\ell_k^2\left(e^{-4(2-(5/4)^{3/2})\left(\ln L_k\right)^{3/2}}  +\uc{c:sprinkling} e^{-\frac{1}{\uc{c:sprinkling}}\left(hL_k\right)^{1/8}+8\left(\ln L_k\right)^{3/2}}\right)\le 1.
\end{equation}
\begin{lemma}\label{l:lemmasprink}
If, for some  $k\ge \uck{k:lastone}$ and $h\ge1$, we have that
\begin{equation}
\tilde{p}_{hL_{k}}(v_k,\rho_k)\le e^{-4\left(\ln L_k\right)^{3/2}},
\end{equation}
then we have that
\begin{equation}
\tilde{p}_{hL_{k+1}}(v_{k+1},\rho_{k+1})\le e^{-4\left(\ln L_{k+1}\right)^{3/2}}.
\end{equation}
\end{lemma}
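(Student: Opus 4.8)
The plan is to perform a single renormalization step, from scale $k$ to scale $k+1$, by combining the deterministic decomposition of Lemma~\ref{l:determlemmasprink} with the sprinkling decoupling of Proposition~\ref{p:sprinkling}. Fix $h\ge1$, $k\ge\uck{k:lastone}$ and $m=(h,k+1,w)\in M^h_{k+1}$. By Lemma~\ref{l:determlemmasprink},
\[
\tilde A_m(v_{k+1})\ \subseteq\ \bigcup\ \tilde A_{m_1}(v_k)\cap\tilde A_{m_2}(v_k),
\]
the union running over pairs $m_1=(h,k,w_1)$, $m_2=(h,k,w_2)$ in $C_m$ with $\pi_2(w_2)-\pi_2(w_1)\ge\uc{c:boundrhok}\,hL_k$. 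A union bound then reduces the task to estimating $\mathbb{P}^{\rho_{k+1}}[\tilde A_{m_1}(v_k)\cap\tilde A_{m_2}(v_k)]$ for a single such pair, at the price of a combinatorial factor bounded by $|C_m|^2\le 9\ell_k^4$ (or by the $3\ell_k^2$ recorded in \eqref{e:lastone}, achievable by organising the union over the boxes of $C_m$ rather than over pairs); being polynomial in $\ell_k$ against a main term that decays like $e^{-c(\ln L_k)^{3/2}}$, such a prefactor is harmless.

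The crucial observation is that, conditionally on the environment $\eta$, the two events in a fixed pair are independent, and each is a function of $\eta$ on a scale-$k$ box. Indeed, by the Lipschitz bound \eqref{e:lipschitz} the walks started from $I_{m_i}$ stay inside $B^h_{L_k}(w_i)$ up to time $hL_k$, so $\tilde A_{m_i}(v_k)$ is determined by $\eta$ inside $B^h_{L_k}(w_i)$ together with the steering variables $(U_w)$ whose time coordinate lies in $[\pi_2(w_i),\pi_2(w_i)+hL_k)$; since $\pi_2(w_2)-\pi_2(w_1)\ge\uc{c:boundrhok}hL_k\ge hL_k$, these two families of steering variables are disjoint, whence the conditional independence. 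Writing $f_i(\eta):=\mathbb{P}^\eta[\tilde A_{m_i}(v_k)]$ we therefore get $\mathbb{P}^{\rho_{k+1}}[\tilde A_{m_1}(v_k)\cap\tilde A_{m_2}(v_k)]=\mathbb{E}^{\rho_{k+1}}[f_1f_2]$, and each $f_i$ is $[0,1]$-valued, supported on $B^h_{L_k}(w_i)$, and non-increasing in $\eta$: under the monotone coupling of the walks --- which uses \eqref{e:probassump}, so that adding particles only pushes walkers to the right --- the ``slow'' event $\tilde A_{m_i}(v_k)$ becomes less likely as $\eta$ grows. These are exactly the hypotheses of Proposition~\ref{p:sprinkling}, applied with $w_2$ (the later box) in the role of its $w_1$, so that the gap $\uc{c:boundrhok}hL_k$ exceeds the required $\uc{c:distsprinkling}hL_k$ once $\uc{c:boundrhok}\ge\uc{c:distsprinkling}$; it yields
\[
\mathbb{E}^{\rho_{k+1}}[f_1f_2]\ \le\ \mathbb{E}^{\rho_k}[f_1]\,\mathbb{E}^{\rho_k}[f_2]+\uc{c:sprinkling}\,e^{-\frac{1}{\uc{c:sprinkling}}(hL_k)^{1/8}}\ =\ \mathbb{P}^{\rho_k}[\tilde A_{m_1}(v_k)]\,\mathbb{P}^{\rho_k}[\tilde A_{m_2}(v_k)]+\uc{c:sprinkling}\,e^{-\frac{1}{\uc{c:sprinkling}}(hL_k)^{1/8}}.
\]
By the inductive hypothesis each factor on the right is at most $\tilde p_{hL_k}(v_k,\rho_k)\le e^{-4(\ln L_k)^{3/2}}$, so this is at most $e^{-8(\ln L_k)^{3/2}}+\uc{c:sprinkling}\,e^{-\frac{1}{\uc{c:sprinkling}}(hL_k)^{1/8}}$.

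Collecting the estimates, for every $m\in M^h_{k+1}$,
\[
\mathbb{P}^{\rho_{k+1}}[\tilde A_m(v_{k+1})]\ \le\ 3\ell_k^2\Big(e^{-8(\ln L_k)^{3/2}}+\uc{c:sprinkling}\,e^{-\frac{1}{\uc{c:sprinkling}}(hL_k)^{1/8}}\Big),
\]
and since $m$ is arbitrary this bounds $\tilde p_{hL_{k+1}}(v_{k+1},\rho_{k+1})$ (recall from Remark~\ref{r:shift} that taking the supremum over $m$ amounts to taking it over starting positions). It remains to check the displayed bound is at most $e^{-4(\ln L_{k+1})^{3/2}}$: by \eqref{e:scale_round} one has $(\ln L_{k+1})^{3/2}\le(5/4)^{3/2}(\ln L_k)^{3/2}$, so multiplying through by $e^{4(\ln L_{k+1})^{3/2}}$ and using $4(5/4)^{3/2}<8$ leaves exactly the quantity that the defining inequality \eqref{e:lastone} of $\uck{k:lastone}$ bounds by $1$; this closes the induction. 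I expect the only genuinely delicate ingredients to be (i) the verification that $\eta\mapsto\mathbb{P}^\eta[\tilde A_{m_i}(v_k)]$ is a bona fide non-increasing, box-supported function, so that Proposition~\ref{p:sprinkling} applies --- this is where \eqref{e:probassump} and the monotone coupling are needed --- and (ii) the matching of the separation constant $\uc{c:boundrhok}$ from Lemma~\ref{l:determlemmasprink} with $\uc{c:distsprinkling}$; both are bookkeeping issues, and no idea beyond the deterministic lemma and the sprinkling inequality is required.
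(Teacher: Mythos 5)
Your proposal follows essentially the same route as the paper's own (terse) proof: apply the deterministic decomposition of Lemma~\ref{l:determlemmasprink}, take a union bound over the (vertically separated) pairs of sub-boxes, pass from events to the quenched-probability functions $f_i(\eta)=P^\eta[\tilde A_{m_i}(v_k)]$, apply the sprinkling decoupling of Proposition~\ref{p:sprinkling}, invoke the inductive hypothesis, and close by the defining inequality~\eqref{e:lastone} of $\uck{k:lastone}$. You supply more detail than the paper does on the two points it leaves implicit --- why the $f_i$ are $[0,1]$-valued, box-supported, non-increasing in $\eta$ (via $p_\bullet\ge p_\circ$ and the monotone coupling), and why the conditional independence given the environment holds (disjoint time windows for the steering variables) --- and you are right that both are genuine preconditions for Proposition~\ref{p:sprinkling}. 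You also correctly flag the two bookkeeping wrinkles the paper glosses over: the combinatorial prefactor arising from a union over \emph{pairs} of boxes is a priori $|C_m|^2\le 9\ell_k^4$ rather than the $3\ell_k^2$ written into~\eqref{e:lastone} (immaterial, since both are polynomial in $\ell_k$ against a stretched-exponential main term, but the paper is being slightly sloppy), and the vertical separation $\uc{c:boundrhok}hL_k$ produced by Lemma~\ref{l:determlemmasprink} must dominate the $\uc{c:distsprinkling}hL_k$ required by Proposition~\ref{p:sprinkling}, which the paper never explicitly arranges. None of this changes the logic; the argument is correct and matches the paper's proof.
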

\begin{proof}
Similarly as what we did in the proof of Proposition \ref{metalemma}, but using Lemma \ref{l:determlemmasprink} and Proposition \ref{p:sprinkling} for the decoupling, we have that
\begin{equation}
\begin{split}
\frac{\tilde{p}_{hL_{k+1},\rho_{k+1}}(v_{k+1})}{e^{-4\left(\ln L_{k+1}\right)^{3/2}}} \le& 3\ell_k^2e^{4\left(\ln L_{k+1}\right)^{3/2}}\left((\tilde{p}_{hL_{k}}(v_k,\rho_k))^2  +\uc{c:sprinkling} e^{-\frac{1}{\uc{c:sprinkling}}\left(hL_k\right)^{1/8}}\right)\\
\le&3\ell_k^2\left(e^{-4(2-(5/4)^{3/2})\left(\ln L_k\right)^{3/2}}  +\uc{c:sprinkling} e^{-\frac{1}{\uc{c:sprinkling}}\left(hL_k\right)^{1/8}+8\left(\ln L_k\right)^{3/2}}\right)\\
\le&1,
\end{split}
\end{equation}
where we used \eqref{e:lastone}. In order to apply Proposition \ref{p:sprinkling}, we used that the event $\tilde{A}_m(v)$ are non-increasing events (recall that we assume $p_{\bullet}\ge p_\circ$).
This concludes the proof.
\end{proof}

\nc{c:vpos}
\begin{proposition}\label{p:sprink}
There exists a constant $\uc{c:vpos}\ge1$ such that, for all $k\ge\uck{k:lastone}$, we have that, for any $\rho\ge\rho_++2\epsilon$,
\begin{equation}
\tilde{p}_{\uc{c:vpos}L_k}\left(\frac{5}{8}v(\rho_++\epsilon),\rho\right)\le e^{-4\left(\ln L_k\right)^{3/2}}.
\end{equation}
\end{proposition}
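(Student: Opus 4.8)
The plan is to run the sprinkled renormalization along scales, using Lemma~\ref{l:lemmasprink} as the inductive step, and then to eliminate the density parameter by monotonicity in the environment. First I would fix the constant $\uc{c:vpos}$ by establishing the base case at scale $\uck{k:lastone}$. Since $\uck{k:lastone}\ge\uck{k:minkrho2}\ge\uck{k:minrhok}$, all the velocities $\tilde v_k$ and densities $\rho_k$ are defined for $k\ge\uck{k:lastone}$, and we recall $\tilde v_\infty\ge\tfrac58 v(\rho_++\epsilon)$ and $\rho_\infty<\rho_++2\epsilon$. Because $\rho_{\uck{k:lastone}}\ge\rho_{\uck{k:minrhok}}>\rho_++\epsilon$, the monotonicity of $v_-$ together with Theorem~\ref{l:v_+=v_-} yields
\[
v_-(\rho_{\uck{k:lastone}})\ \ge\ v_-(\rho_++\epsilon)\ =\ v(\rho_++\epsilon)\ >\ \tfrac78 v(\rho_++\epsilon)\ =\ \tilde v_{\uck{k:minkrho2}}\ \ge\ \tilde v_{\uck{k:lastone}},
\]
so that $\tilde v_{\uck{k:lastone}}<v_-(\rho_{\uck{k:lastone}})$. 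By the definition~\eqref{e:v_-} of $v_-$, argued along the arithmetic progression $\{hL_{\uck{k:lastone}}:h\ge1\}$ exactly as in the proof of Lemma~\ref{l:deviation_reduction}, this gives $\liminf_{h\to\infty}\tilde p_{hL_{\uck{k:lastone}}}(\tilde v_{\uck{k:lastone}},\rho_{\uck{k:lastone}})=0$, and we may fix $\uc{c:vpos}\ge1$ so large that $\tilde p_{\uc{c:vpos}L_{\uck{k:lastone}}}(\tilde v_{\uck{k:lastone}},\rho_{\uck{k:lastone}})\le e^{-4(\ln L_{\uck{k:lastone}})^{3/2}}$.

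Next, keeping $h=\uc{c:vpos}$ fixed, I would iterate Lemma~\ref{l:lemmasprink} starting from the base case, obtaining $\tilde p_{\uc{c:vpos}L_k}(\tilde v_k,\rho_k)\le e^{-4(\ln L_k)^{3/2}}$ for every $k\ge\uck{k:lastone}$. To conclude, fix $\rho\ge\rho_++2\epsilon$. For every $k\ge\uck{k:lastone}$ one has $\rho\ge\rho_++2\epsilon>\rho_\infty\ge\rho_k$, and since (using $p_\bullet\ge p_\circ$, as in the proof of Lemma~\ref{l:lemmasprink}) each event $\tilde A_{\uc{c:vpos}L_k,w}(v)$ is non-increasing in the environment, adding particles only makes it less likely; hence $\tilde p_{\uc{c:vpos}L_k}(v,\rho)\le\tilde p_{\uc{c:vpos}L_k}(v,\rho_k)$ for every $v$. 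Moreover $v\mapsto\tilde A_{H,w}(v)$ is increasing for the inclusion and $\tilde v_k\ge\tilde v_\infty\ge\tfrac58 v(\rho_++\epsilon)$, so $\tilde A_{H,w}\big(\tfrac58 v(\rho_++\epsilon)\big)\subseteq\tilde A_{H,w}(\tilde v_k)$, and therefore
\[
\tilde p_{\uc{c:vpos}L_k}\big(\tfrac58 v(\rho_++\epsilon),\rho\big)\ \le\ \tilde p_{\uc{c:vpos}L_k}(\tilde v_k,\rho)\ \le\ \tilde p_{\uc{c:vpos}L_k}(\tilde v_k,\rho_k)\ \le\ e^{-4(\ln L_k)^{3/2}},
\]
which is the desired bound (and is meaningful since $v(\rho_++\epsilon)>0$ by definition of $\rho_+$).

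The substantive content has already been isolated in Lemma~\ref{l:determlemmasprink} (a slow box at scale $k+1$ forces several vertically well-separated slow boxes at scale $k$) and Lemma~\ref{l:lemmasprink} (the sprinkled decoupling step, which relies on Proposition~\ref{p:sprinkling}), so the proof of Proposition~\ref{p:sprink} itself is essentially bookkeeping. The only delicate point I expect is the base case: one must be sure the $\liminf$ in~\eqref{e:v_-} can be realized along the sparse set of times $\{hL_{\uck{k:lastone}}:h\ge1\}$, which is handled precisely as in Lemma~\ref{l:deviation_reduction}; beyond that, the main thing to keep track of is the hierarchy $\uck{k:minrhok}\le\uck{k:minkrho2}\le\uck{k:lastone}$ and the telescoping estimates ensuring $\tilde v_\infty\ge\tfrac58 v(\rho_++\epsilon)$ and $\rho_\infty<\rho_++2\epsilon$.
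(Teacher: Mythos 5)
Your proposal is correct and is essentially the paper's own proof: you fix $\uc{c:vpos}$ so the base case holds at scale $\uck{k:lastone}$ (using $\tilde v_{\uck{k:lastone}}<v_-(\rho_{\uck{k:lastone}})$, which the paper obtains tersely from $\tilde v_{\uck{k:lastone}}<v(\rho_++\epsilon)=v_-(\rho_++\epsilon)$ and which you make explicit via monotonicity of $v_-$), then iterate Lemma~\ref{l:lemmasprink}, then pass from $(\tilde v_k,\rho_k)$ to $\bigl(\tfrac58 v(\rho_++\epsilon),\rho\bigr)$ by monotonicity of $\tilde A_m(v)$ in $v$ and in the density. The one point you flag — realizing the $\liminf$ in \eqref{e:v_-} along the progression $\{hL_{\uck{k:lastone}}\}$ — is indeed treated implicitly by the paper (as in Lemma~\ref{l:deviation_reduction}); it is justified by a short Lipschitz interpolation, but stating it explicitly is reasonable hygiene.
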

\begin{proof}
Note that $v_{\uck{k:lastone}}<v(\rho_++\epsilon)=v_-(\rho_++\epsilon)$, thus there exists $\uc{c:vpos}\ge1$ such that
\begin{equation}
\tilde{p}_{\uc{c:vpos}L_{\uck{k:lastone}}}(v_{\uck{k:lastone}},\rho_{\uck{k:lastone}})\le e^{-4\left(\ln L_{\uck{k:lastone}}\right)^{3/2}}.
\end{equation}
Thus, by Lemma \ref{l:lemmasprink}, we have that, for all $k\ge {\uck{k:lastone}}$,
\begin{equation}
\tilde{p}_{\uc{c:vpos}L_{k}}(v_{k},\rho_{k})\le e^{-4\left(\ln L_{k}\right)^{3/2}}.
\end{equation}
Finally, noting that the events $\tilde{A}_m(v)$ are non-decreasing in $v$ and non-increasing in $\rho$, we can conclude by noting that, for $k\ge {\uck{k:lastone}}$, $v_k\ge \frac{5}{8}v(\rho_++\epsilon)$ and $\rho_k\le \rho_++2\epsilon$.
\end{proof}

\begin{proof}[Proof of Proposition \ref{p:ballisticity}]
From Proposition \ref{p:sprink}, we simply need an interpolation argument, very similar to the one exposed in Section \ref{s:proof_deviation_interp}. Therefore, we will note entirely re-do it but simply point out the differences.\\
First, one should define $v=v(\rho_++\epsilon)/2$ and $v'=\frac{5}{8}v(\rho_++\epsilon)>v$. Second, one should consider events of the form $\tilde{A}_{m'}(v')$ (instead of ${A}_{m'}(v')$). In particular, this would yield the opposite inequality in \eqref{e:inter1}. The inequality \eqref{e:inter2} becomes
\begin{equation}
H-{\left\lfloor \frac{H}{\uc{c:h_hat}  L_{\bar{k}}} \right\rfloor \uc{c:h_hat} L_{\bar{k}}}\ge- \frac{v'-v}{2}H,
\end{equation}
and thus we obtain the opposite inequality in \eqref{e:inter3}. The rest of the proof is identical.
\end{proof}

%%%%%%%%%%%%%%%%%%%%%%%%%%%%%%%%%%%%%%%%%%%%%%%%%%%%%%%%%%%
%%%%%%%%%%%%%%%%%%%%%%%%%%%%%%%%%%%%%%%%%%%%%%%%%%%%%%%%%%%

\section{Proof of Theorem \ref{t:speed}} \label{billburr}

%%%%%%%%%%%%%%%%%%%%%%%%%%%%%%%%%%%%%%%%%%%%%%%%%%%%%%%%%%%
%%%%%%%%%%%%%%%%%%%%%%%%%%%%%%%%%%%%%%%%%%%%%%%%%%%%%%%%%%%

In order to conclude the proof of Theorem \ref{t:speed}, we simply need to use, as inputs, results from \cite{RWonRW} for PCRW and from \cite{francoiss} for SSEP. Theorem \ref{t:speed} is given by  the following result. These results are based on regeneration structures.

\begin{proposition}
Consider the environment $\eta^\rho$ with density $\rho\in(0,1)$ under either the measure $\mathbf{P}^\rho_{EP}$ or $\mathbf{P}^\rho_{RW}$. Recall the definition \eqref{e:defv} of $v(\cdot)$ and the definition \eqref{e:defrho+-} of $\rho_-$ and $\rho_+$.\\
First, for any $\rho\in(0,1)\setminus \{\rho_-,\rho_+\}$,
\begin{equation}\label{creepy}
 \frac{X_n}{n} \to v(\rho), \qquad \mathbb{P}^{\rho}-\text{almost surely.}
\end{equation}
Second, we have, for  any $\rho\in(0,\rho_-)\cup(\rho_+,1)$, a functional central limit theorem for $X_n$ under $\mathbb{P}^\rho$, that is
\begin{equation}\label{supercreepy}
\left(\frac{X_{\lfloor nt\rfloor}-ntv(\rho)}{\sqrt{n}}\right)_{t\ge0} \stackrel{(d)}{\to} \left(B_t\right)_{t\ge0},
\end{equation}
where $\left(B_t\right)_{t\ge0}$ is a non-degenerate Brownian motion and where the convergence in law holds in the Skorohod topology.
\end{proposition}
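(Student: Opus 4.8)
The plan is to reduce the statement to the regeneration-structure results of \cite{francoiss} (for the SSEP) and \cite{RWonRW} (for the PCRW), using the ballisticity estimate of Proposition~\ref{p:ballisticity} as the only missing input. Both references establish, for their respective environments, that \emph{if} the random walker satisfies a suitable ballisticity condition — roughly, a stretched-exponential tail bound on the probability of linear backtracking, or equivalently that the walk moves at least at some fixed linear speed with overwhelming probability — then one can construct regeneration times $\tau_1 < \tau_2 < \cdots$ with good integrability: the increments $(\tau_{i+1}-\tau_i, X_{\tau_{i+1}}-X_{\tau_i})$ are i.i.d.\ (after the first), have exponential moments of some positive order, and in particular finite mean and variance. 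Once such a structure is in place, the LLN \eqref{creepy} follows from the strong law of large numbers applied along the regeneration times together with the standard sandwiching of $X_n$ between $X_{\tau_{k(n)}}$ and $X_{\tau_{k(n)+1}}$, and the functional CLT \eqref{supercreepy} follows from Donsker's theorem for the i.i.d.\ sequence of regeneration increments plus a tightness argument controlling the fluctuations within a regeneration block (again using the exponential moment bound). The limiting speed identified this way is $\mathbb{E}[X_{\tau_2}-X_{\tau_1}]/\mathbb{E}[\tau_2-\tau_1]$, and since Theorem~\ref{l:v_+=v_-} already shows that \emph{if} a speed exists it must equal $v(\rho)$, this pins down the constant; non-degeneracy of the Brownian motion comes from the fact that the regeneration increments are genuinely random (they are not deterministic since $p_\circ, p_\bullet \in (0,1)$), so the limiting variance is strictly positive.

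First I would fix $\rho \in (0,1) \setminus [\rho_-, \rho_+]$; by symmetry (flipping the integer line) it suffices to treat $\rho > \rho_+$. Pick $\epsilon > 0$ small enough that $\rho > \rho_+ + 2\epsilon$, which is possible since $\rho > \rho_+$. Proposition~\ref{p:ballisticity} then gives $v(\rho_+ + \epsilon) > 0$ and the bound $\tilde{p}_H(v(\rho_+ + \epsilon)/2, \rho) \le \uc{c:ball+}\exp(-2\ln^{3/2} H)$ for all $H \ge 1$. Combined with the upper deviation bound from Lemma~\ref{l:deviation_interp} (which controls $p_H((v_+(\rho)\vee 0)+\epsilon',\rho)$, hence rules out moving too fast), this says precisely that $X_n$ is ballistic with a stretched-exponential control on both over- and under-shooting a fixed positive speed. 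This is exactly the hypothesis under which the regeneration constructions of \cite{francoiss} and \cite{RWonRW} operate; I would cite the relevant theorem in each paper and check that the hypotheses there are implied by our bounds (they are typically phrased as a condition like $\mathbb{P}[X_n \le \delta n \text{ for some large } n] $ decaying fast, or a Sznitman-type condition $(T)$).

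Then I would invoke the regeneration theorem: there exist random times $0 = \tau_0 < \tau_1 < \tau_2 < \cdots$ such that, under $\mathbb{P}^\rho$, the sequence $\big((\tau_{i+1}-\tau_i,\, X_{\tau_{i+1}}-X_{\tau_i})\big)_{i \ge 1}$ is i.i.d., independent of $(\tau_1, X_{\tau_1})$, and admits an exponential moment: $\mathbb{E}^\rho[\exp(c(\tau_2-\tau_1))] < \infty$ for some $c>0$, and similarly for $X_{\tau_2}-X_{\tau_1}$ (which is anyway bounded by $\tau_2-\tau_1$ via \eqref{e:lipschitz}). From here the argument is classical. For \eqref{creepy}: by the strong law, $X_{\tau_k}/k \to \mathbb{E}[X_{\tau_2}-X_{\tau_1}]$ and $\tau_k/k \to \mathbb{E}[\tau_2-\tau_1] \in (0,\infty)$ almost surely, so $X_{\tau_k}/\tau_k$ converges a.s.\ to the ratio; interpolating with the bound $|X_n - X_{\tau_{k(n)}}| \le \tau_{k(n)+1}-\tau_{k(n)}$, which is $o(n)$ a.s.\ since the gaps are i.i.d.\ with finite mean, gives $X_n/n \to \mathbb{E}[X_{\tau_2}-X_{\tau_1}]/\mathbb{E}[\tau_2-\tau_1]$; finally this limit must coincide with $v(\rho)$ by Theorem~\ref{l:v_+=v_-} and the definitions of $v_\pm$. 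For \eqref{supercreepy}: the partial sums $\sum_{i=1}^{k}\big((X_{\tau_{i+1}}-X_{\tau_i}) - v(\rho)(\tau_{i+1}-\tau_i)\big)$ form a mean-zero i.i.d.\ sum with finite variance $\sigma^2 > 0$, so by Donsker the rescaled process converges to a Brownian motion; a time change from the regeneration index $k$ to real time $n$ (using $\tau_k/k \to \mathbb{E}[\tau_2-\tau_1]$) plus a maximal inequality controlling $\max_{i \le k}(\tau_{i+1}-\tau_i)$ and the within-block displacement shows the continuous-time process $(X_{\lfloor nt\rfloor} - ntv(\rho))/\sqrt n$ has the same limit, and tightness in the Skorohod topology follows from the exponential moments. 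Non-degeneracy ($\sigma^2 > 0$) holds because the regeneration increments cannot be a.s.\ proportional to a constant given the genuine randomness of the walk's jumps.

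The main obstacle is not conceptual but bookkeeping: one must verify that the precise ballisticity hypothesis demanded by \cite{francoiss} and \cite{RWonRW} — each stated in its own formalism, and for a continuous-time walk on the SSEP in \cite{francoiss} versus the discrete-time lazy-walk PCRW in \cite{RWonRW} — is genuinely implied by the stretched-exponential bounds of Proposition~\ref{p:ballisticity} and Lemma~\ref{l:deviation_interp}, and that the regeneration increments there really do have the exponential moments needed for the CLT (some versions only give polynomial moments, which would still suffice for the LLN and a plain CLT but require more care for the functional statement and tightness). A secondary, more technical point flagged already in the paper's remarks is that these references are written for slightly different walks/environments than ours (e.g.\ discrete vs.\ continuous time, the precise laziness parameter), so one has to argue that their regeneration constructions apply verbatim or with only cosmetic changes; the authors explicitly choose to use the existing results as-is rather than re-derive them, so the proof here is essentially a careful citation with the hypotheses checked.
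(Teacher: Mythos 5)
Your approach is essentially the same as the paper's: reduce to the regeneration-time constructions of \cite{francoiss} (SSEP) and \cite{RWonRW} (PCRW), with Proposition~\ref{p:ballisticity} supplying the ballisticity input, then appeal to the classical LLN/CLT machinery along regeneration blocks and identify the speed via Theorem~\ref{l:v_+=v_-}. However, there is one genuine gap and one point where you are slightly optimistic about what the inputs deliver.

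The gap: the proposition asserts the LLN \eqref{creepy} for all $\rho \in (0,1)\setminus\{\rho_-,\rho_+\}$, which includes the open interval $(\rho_-,\rho_+)$, but you immediately restrict to $\rho \in (0,1)\setminus[\rho_-,\rho_+]$ and never come back to the inner interval. Since $v$ is non-decreasing, $v(\rho)=0$ for every $\rho\in(\rho_-,\rho_+)$, and this case is handled directly by Theorem~\ref{t:speed_zero} (which is what the paper does in one line); your proof needs this sentence added, as the regeneration machinery is unavailable when the speed is zero.

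The optimistic point: you assert exponential moments for the regeneration increments and lean on them for tightness in the functional CLT. But Proposition~\ref{p:ballisticity} only gives a super-polynomial bound of the form $\exp(-2\ln^{3/2}H)$, which is weaker than a stretched exponential, let alone a true exponential. The paper is explicit about this discrepancy: the conclusion of Proposition~3.3 in \cite{francoiss} is a stretched-exponential decay, whereas Proposition~\ref{p:ballisticity} gives only super-polynomial decay, and the paper then goes through Section~5 of \cite{francoiss} verifying that each place where Proposition~3.3 is invoked only actually needs polynomial decay. Similarly, for the PCRW case the paper checks that the bound feeding into Theorem~1.4 of \cite{RWonRW} is of the form $\exp(-c\ln^{3/2}L)$, not a stretched exponential. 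Your remark that ``some versions only give polynomial moments'' shows you're aware of this subtlety, but the proof as written presupposes exponential moments rather than doing the verification that the weaker decay suffices; that verification is the substantive part of the paper's argument here and should not be waved away as bookkeeping.

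Finally, you would also want to note (as the paper does for the RW case) that $\rho_+ < 1$ forces $p_\bullet > 1/2$: if $p_\bullet \le 1/2$ then $p_\circ \le p_\bullet \le 1/2$ and the walk is stochastically dominated by a symmetric or left-drifting simple random walk for every $\rho$, contradicting $v(\rho)>0$ for $\rho > \rho_+$. This is one of the hypotheses of \cite{RWonRW} that must be checked, and it is not automatic from $\rho > \rho_+$ without this argument.
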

\begin{proof}
First, if $\rho\in(\rho_-,\rho_+)$, then \eqref{creepy} follows  from Theorem \ref{t:speed_zero}. Second, we prove \eqref{creepy} and \eqref{supercreepy} for $\rho\in(\rho_+,1)$ and the conclusion will follow by symmetry.\\

From now on, we suppose that $\rho_+<1$ and fix $\rho\in(\rho_+,1)$. Note that, as $\rho_+<1$, we have that $p_\bullet>1/2$. Indeed, as $p_\circ\le p_\bullet$, if $p_\bullet\le 1/2$, then the random walker would always be on the left of a simple random walk (regardless of the value of $\rho$), therefore we could not have that $\rho_+<1$.\\

In the following, we apply existing results in order to prove that the speed of the walk exists. The fact that the speed coincides with $v(\rho)$ is a direct consequence of Theorem \ref{l:v_+=v_-} and of the definition of $v_+$ and $v_-$.\\

\noindent
{\bf Case I: under $\mathbf{P}^\rho_{RW}$.}\\
Here, we want to apply Theorem 1.4 from \cite{RWonRW} and we thus simply need to check that the conditions of the statement are satisfied. Recall that we assumed $\rho_+<1$, $\rho\in(\rho_+,1)$ as well as $p_\bullet>1/2$ and $p_\circ>0$. Using the notation of Theorem 1.4 from \cite{RWonRW}, $p_\bullet>1/2$ corresponds to $v_\bullet>0$ and $p_\circ>0$ corresponds to $v_\circ>-1$.\\
Let us define $\epsilon=\rho-\rho_+>0$. Let us define $v_{\star}=v(\rho_++\epsilon)/2$. By Proposition \ref{p:ballisticity}, we have that $v_\star>0$ and there exists a constant $\uc{c:ball+}(\epsilon)$  such that
\begin{equation}
\tilde{p}_H (v(\rho_++\epsilon)/2, \rho)\leq \uc{c:ball+}\exp\big(-2\ln^{3/2}H \big).
\end{equation}
\nc{c:ball++}
This implies that, for any $L\in \mathbb{N}$, there exists a constant $\uc{c:ball++}\in(0,\infty)$ such that
\begin{equation}
\begin{split}
\mathbb{P}^\rho\left[\exists n\in\mathbb{N}:\ X_n\le nv_\star -L\right]&\le \sum_{k=\lfloor L/2\rfloor}^\infty \uc{c:ball+}\exp\big(-2\ln^{3/2}k \big)\\
&\le  \frac{1}{\uc{c:ball++}}\exp\big(-\uc{c:ball++}\ln^{3/2}L \big).
\end{split}
\end{equation}
By Theorem  1.4 from \cite{RWonRW}, the conclusions $(a)$ and $(b)$ of Theorem 1.2 of \cite{RWonRW} hold, and these are precisely \eqref{creepy} and \eqref{supercreepy}.\\

\noindent
{\bf Case II: under $\mathbf{P}^\rho_{EP}$.}\\
In order to prove this case, we can follow line by line the proof of Theorem 1.1 in \cite{francoiss}, Section 5. 
Note that we will not need to suppose the same assumptions as in Theorem 1.1 in \cite{francoiss} (in particular we do not need to take the rate $\gamma$ to be large).
Ineed, these assumptions are used in the proof of Proposition 3 in \cite{francoiss}. But, once we assume the conclusions of Proposition 3 in \cite{francoiss}, then we can follow line by line the proof of Theorem 1.1 in Section 5 of \cite{francoiss}, without any assumption.\\
The conclusion of Proposition 3.3 in \cite{francoiss} is almost provided to us by Proposition \ref{p:ballisticity} as \eqref{e:ballisticity} yields
\begin{equation}\label{flexao}
 \tilde{p}_H (v(\rho_++\epsilon)/2, \rho)\leq \uc{c:ball+}\exp\big(-2\ln^{3/2}H \big).
 \end{equation}
The difference with the conclusion of Proposition 3.3 in \cite{francoiss} is that we obtain a super-polynomial decay instead of a stretched exponential decay. Nevertheless, a close inspection of the proof in \cite{francoiss} shows that, along the proof, $\tilde{p}_H (v(\rho_++\epsilon)/2, \rho)$ is always simply bounded by some polynomial decay. To ease the work of the reader, we list here the places in Section 5 of \cite{francoiss} where Proposition 3.3 of \cite{francoiss} is applied:
\begin{itemize}
\item[-] page 31: Proposition 3.3 is used to prove Proposition 5.1 of \cite{francoiss}, in the paragraph just before the last paragraph on p.~31. Even though the first inequality is not true anymore, we still have that $P_0(X_n\le\frac{v}{2}n)\le 1/n^{q+2}$, keeping the notation therein.
\item[-] page 35: in the proof of $(5.24)$ of \cite{francoiss}, it is used that $\liminf X_n/n>v$ almost surely, which is an easy consequence of \eqref{flexao} and Borel-Cantelli Lemma.
\item[-] page 39: Proposition 3.3 is used to prove Proposition 5.6 of \cite{francoiss} and, even if the second inequality is not true anymore, we can directly write the third inequality by \eqref{flexao}.
\end{itemize}
Once these minor modifications are made, we can apply the whole proof from Section 5 of \cite{francoiss} and prove  \eqref{creepy} and \eqref{supercreepy}.
\end{proof}

\noindent
{\bf Acknowledgement:} DK and MH warmly thanks NYU Shanghai, IMPA and MFO-Oberwolfach where this work was partially done. MH was partially supported by CNPq grant 406659/2016-8.

\bibliographystyle{alpha}
\bibliography{cavalinhos_2}

\end{document}